\newcommand{\N}{\mathbb{N}}
\newcommand{\R}{\mathbb{R}}
\newcommand{\Z}{\mathbb{Z}}
\newcommand{\backspace}{\!\!\!\!\!\!\!}
\newcommand{\rel}{\rightharpoonup}
\newcommand{\ra}{\rightarrow}
\newcommand{\rra}{\rightrightarrows}
\newcommand{\xra}{\xrightarrow}
\newcommand{\ira}{\hookrightarrow}
\DeclareMathOperator*{\colim}{colim}
\DeclareMathOperator*{\holim}{holim}
\newcommand{\im}{{\mathop{im}\;\!\!}}
\newcommand{\GENERIC}{\mathscr{C}}
\newcommand{\DERIVED}{\mathscr{D}}
\newcommand{\DIAGRAM}{\mathscr{G}}
\newcommand{\Sh}{\mathop{Sh}\;\!\!\!}
\newcommand{\directsum}{\bigoplus}
\newcommand{\ring}[1]{{\ifthenelse{1=#1}{R}{{\ifthenelse{2=#1}{S}{T}}}}}
\newcommand{\semiring}[1]{{\ifthenelse{0=#1}{\N}{{\ifthenelse{1=#1}{S}{T}}}}}
\newcommand{\semimodule}[1]{{\ifthenelse{1=#1}{M}{N}}}
\newcommand{\semigroup}[1]{{\ifthenelse{0=#1}{\Z^+}{{\ifthenelse{1=#1}{G}{H}}}}}
\newcommand{\grading}{{\bullet}}
\newcommand{\tensor}[1]{\otimes_{\semiring{#1}}}
\newcommand{\homology}[3]{\!\!\;_{#3}H^{#1}_{#2}}
\newcommand{\cohomology}[3]{\!\!\;_{#3}H_{#1}^{#2}}
\newcommand{\sd}{\mathop{sd}\;\!\!}
\newcommand{\orientation}[1]{\mathcal{O}_{#1}}
\newcommand{\constantsheaf}[1]{k_{#1}}
\newcommand{\sheaf}[1]{\ifthenelse{1=#1}{\mathcal{F}}{\ifthenelse{2=#1}{\mathcal{G}}{\ifthenelse{3=#1}{\mathcal{H}}{\ifthenelse{4=#1}{\mathcal{I}}{\mathcal{J}}}}}}
\newcommand{\cosheaf}[1]{\ifthenelse{1=#1}{\mathcal{A}}{\ifthenelse{2=#1}{\mathcal{B}}{\ifthenelse{3=#1}{\mathcal{C}}{\ifthenelse{4=#1}{\mathcal{D}}{\mathcal{E}}}}}}
\newcommand{\facerelation}{{\trianglelefteqslant}}
\newcommand{\pseudoprod}{\prod}
\newcommand{\edgeweights}{\omega}
\newcommand{\connectinghom}{\delta}
\newcommand{\flowvalues}[3]{\substack{\text{$#1$-values of finite,}\\\text{locally $S$-decomposable}\\\text{$#3$-flows}}}
\newcommand{\cutvalue}[2]{[#1]_{e,#2}} 
\newcommand{\indecomposable}{indecomposable}
\newcommand{\digraph}[1]{{\ifthenelse{1=#1}{X}{{\ifthenelse{2=#1}{Y}{Z}}}}}
\newcommand{\edges}[1]{E_{#1}}
\newcommand{\vertices}[1]{V_{#1}}
\newcommand{\ncdihomology}[1]{H_{#1}}
\newcommand{\dihomology}[1]{H^c_{#1}}
\newcommand{\ncdicohomology}[1]{H^{#1}}
\newcommand{\dicohomology}[1]{H^{#1}_c}
\newcommand{\chainsheaf}[1]{\mathcal{S}_{#1}}
\newcommand{\COEFFICIENTS}{{\mathscr{M}}_{S}}
\newcommand{\CONSTRAINTS}{\hat{\mathscr{M}}_S}
\newcommand{\PARTIALSETS}{\mathscr{P}}
\newcommand{\delplus}[1]{{\partial_{{\scalebox{#1}{$+$}}}\!}}
\newcommand{\delminus}[1]{{\partial_{{\scalebox{#1}{$-$}}}\!}}
\newtheorem*{prop:holim}{Proposition \ref{prop:holim}}
\newtheorem*{prop:flows}{Proposition \ref{prop:flows}}
\newtheorem*{prop:cut-values}{Proposition \ref{prop:cut-values}}
\newtheorem*{thm:pd}{Theorem \ref{thm:pd}}
\newtheorem*{thm:flatness}{Theorem \cite[2.2]{katsov2004flat}}
\newtheorem*{thm:frieze}{Theorem \cite[2.1]{frieze1984algebraic}}
\newtheorem*{prop:universal.coefficients}{Proposition \ref{prop:universal.coefficients}}
\newtheorem*{prop:exactness}{Proposition \ref{prop:exactness}}
\newtheorem*{thm:equalizer}{Theorem \ref{thm:equalizer}}
\newtheorem*{thm:mfmc}{Theorem \ref{thm:mfmc}}
\newtheorem*{cor:mfmc}{Corollary \ref{cor:mfmc}}
\newtheorem*{cor:naive.poset.mfmc}{Corollary \ref{cor:naive.poset.mfmc}}
\newtheorem*{cor:naive.lattice.mfmc}{Corollary \ref{cor:naive.lattice.mfmc}}
\newtheorem*{cor:classical.mfmc}{Corollary \ref{cor:classical.mfmc}}
\newtheorem*{eg:classical.mfmc}{Example \ref{eg:classical.mfmc}}
\newtheorem{thm}{Theorem}[section]
\newtheorem{cor}[thm]{Corollary}
\newtheorem{lem}[thm]{Lemma}
\newtheorem{prop}[thm]{Proposition}
\newtheorem*{lem*}{Lemma}
\theoremstyle{definition}
\newtheorem{defn}[thm]{Definition}
\newtheorem{eg}[thm]{Example}
\title{Flow-Cut dualities for sheaves on graphs}
\author{Sanjeevi Krishnan}
\begin{document}
\begin{abstract}
  This paper generalizes the Max-Flow Min-Cut (MFMC) theorem from the setting of numerical capacities to sheaves of partial semimodules over semirings on directed graphs.
  Motivating examples of partial semimodules include probability distributions, multicommodity capacity constraints, and logical propositions.
  Directed (co)homology theories $\dihomology{\grading},\dicohomology{\grading}$ for such sheaves describes familar constructs on networks.
  First homology $\dihomology{1}$ classifies locally decomposable flows, an orientation sheaf over a semiring generalizes directions, connecting maps from $\dihomology{1}$ to $\dihomology{0}$ assign values to flows, connecting maps from $\dicohomology{0}$ to $\dicohomology{1}$ assign values to cuts, and Poincar\'{e} Duality describes a decomposition of flows as local flows over cuts.
  A consequent interpretation of feasible flow-values as a homotopy limit generalizes MFMC for edge weights in certain ordered monoids [Frieze] and hence also classical MFMC.
  A failure for directed sheaf (co)homology to satisfy a natural generalization of exactness explains certain duality gaps.
\end{abstract}
\maketitle
\tableofcontents


\section{Introduction}
Sheaves encode local data.
Sheaf cohomology, by definition, classifies those global properties of local data invariant under equivalent representations of the same data. 
Sheaf cohomology of group-valued sheaves has seen recent applications in the inference of global properties of complex systems with known local structure \cite{ghrist2014eat}.
Some examples are upper bounds on bit-rates across coding networks \cite{ghrist2011applications}, minimum sampling rates for noisy signals \cite{robinson2013nyquist}, and race conditions on asynchronous microprocessors \cite{robinson2012asynchronous}.
However, the existence of inverses in groups ignores the irreversibility of states in dynamical systems.
For example, the (co)homology of a module-valued sheaf on an oriented simplicial complex is invariant under a change in orientations; properties of systems sensitive to directionality (like orientations) in their state spaces (like simplicial complexes) are undetectable by classical sheaf (co)homology. 

Maximum flow-values and minimum cut-values are examples of such properties on networks.
The maximum traffic speed in a transportation grid and the minimum cost of interrupting a supply chain are examples for \textit{routing networks}.
The maximum throughput of information that can be transmitted and the minimum bandwidth of distributed channels are examples for \textit{coding networks}.
On routing networks, maximum flow-values and minimum cut-values coincide (Algebraic MFMC) under restrictions on the decomposability of flows into loops \cite[Theorem 2.1]{frieze1984algebraic} or the acyclicity of the flows and weak cancellativity of the monoid of possible values \cite[Theorem 3.1]{frieze1984algebraic}.
On coding networks, maximum throughput and minimum bandwidth coincide provided the informaton is transmitted from a single source (MMFMC). 

Such flow-cut dualities on directed graphs resemble topological dualities. 
In fact, undirected graph cuts and undirected graph flows induce Poincar\'{e} dual cohomology and homology classes on an ambient compact surface \cite{chambers2012homology}.  
Moreover, solutions to distributed linear coding problems are elements in the zeroth cohomology of a \textit{network coding sheaf} \cite{ghrist2011applications}.
Flows resemble homology, cuts resemble cohomology, local capacities resemble a sheaf, and flow-cut dualities evoke the Poincar\'{e} duality
\begin{equation}
  \label{eqn:pd}
  \homology{}{p}{}(X;\sheaf{1})\cong\cohomology{}{n-p}{}(X;\orientation{}\otimes\sheaf{1}).
\end{equation}
between homology $\homology{}{p}{}(X;\sheaf{1})$ and cohomology $\cohomology{}{n-p}{}(X;\orientation{\ring{1}}\otimes\sheaf{1})$ up to local orientations $\orientation{}$ for $\sheaf{1}$ a sheaf of $\ring{1}$-modules over a weak homology $n$-manifold $X$ \cite[Theorem 3.2]{bredon1997sheaf} in the case $n=p=1$.

This paper formalizes that resemblance by generalizing the constructions in (\ref{eqn:pd}) for sheaves $\sheaf{1}$ on digraphs ($1$-dimensional directed spaces) that model network constraints of interest in applications.
Edge orientations generalize to \textit{orientation sheaves $\orientation{S}$ over semirings $S$ on digraphs}.
Local constraints on networks define \textit{cellular sheaves $\sheaf{1}$ of partial $\semiring{1}$-semimodules on digraphs}, \textit{partial $S$-sheaves} for short; in particular, edge weights form subsheaves of the constant sheaf at a semilattice ordered $S$-semimodule.
The comparison of values that the sheaf $\sheaf{1}$ takes at different cuts requires \textit{parallel transport} between the different stalks (local values) of $\sheaf{1}$.
\textit{Zeroth homology and first cohomology}, the classification of stalks modulo parallel transport, describe the possible values that flows and cuts can take.
The paper develops the following dictionary.

\vspace{.1in}
\begin{tabular}{r|l}
  {\bf classical} & {\bf generalization}\\
  capacity constraints & $S$-sheaves \\
  edge directions & $S$-orientation sheaf $\orientation{S}$ \\
  flows & $\sheaf{1}$-flows \\
  (locally) decomposable finite flows & $\dihomology{1}$ \\
  flow values & $\dihomology{0}$  \\
  cut values & $\dicohomology{1}$ \\
\end{tabular}
\vspace{.1in}

Thus generalizations of classical results in (co)homology theory translate into insights on network optimization.
For example, local criteria for when certain flat resolutions are not needed in the construction of directed homology [Theorem \ref{thm:equalizer}] translate into local criteria for when sheaf-valued flows locally decompose with respect to a ground semiring [Proposition \ref{prop:flows}].
Similarly, a limited version of the Universal Coefficients Theorem for Homology [Proposition \ref{prop:universal.coefficients}] translates into conditions under which a change of base semiring preserves local decomposability.
Decomposability, and hence local decomposability, over the natural semiring $\N$ is important in algebraic generalizations of the classical Ford-Fulkerson algorithm for computing maximal flows \cite{frieze1984algebraic}.
For another example, the following Poincar\'{e} Duality translates into the decomposability of certain flows into local flows over cuts.

\begin{thm:pd}
  Fix digraph $X$ and open $U\subset X$.
  There exist dotted arrows inside
  \begin{equation}
    \label{eqn:relationship}
  \xymatrix{
      **[l] \dicohomology{0}(X-U;\orientation{S}\tensor{1}\sheaf{1})
      \ar@{.>}[r]
      \ar@<.7ex>[d]^-{\connectinghom_-}\ar@<-.7ex>[d]_-{\connectinghom_+}
    & **[r] \dihomology{1}((X,U);\sheaf{1})
      \ar@<.7ex>[d]^-{\connectinghom_-}\ar@<-.7ex>[d]_-{\connectinghom_+}
    \\
      **[l] \dicohomology{1}((X,X-U);\orientation{S}\tensor{1}\sheaf{1})
      \ar@{.>}[r]
    & **[r] \dihomology{0}(U;\sheaf{1}), 
  }
  \end{equation}
  natural in partial $S$-sheaves $\sheaf{1}$ on $X$, making the diagram jointly commute.
  The top arrow is an isomorphism and the bottom arrow is a surjection.
  The bottom arrow is an isomorphism if $S$ is a ring and each vertex has positive total degree or each vertex in $X$ has both positive in-degree and positive out-degree.
\end{thm:pd}

A subsequent sheaf-theoretic generalization of MFMC characterizes the feasible values of finite, locally decomposable flows as the solution to an optimization problem minimizing generalized values $[C]$ of cuts $C$ with respect to a given sheaf and distinguished edge.

\begin{thm:mfmc}[Sheaf-Theoretic MFMC]
  The equality
  $$\flowvalues{e}{X}{\sheaf{1}}=\holim\;\!\!_C[C]\subset\bigcap_C\;[C],$$
  where $[C]=\cutvalue{C}{\orientation{S}\tensor{1}\sheaf{1}}$ and the homotopy limit is taken over all minimal $e$-cuts $C$ and the inclusion is an equality for the case $\dihomology{1}(-;\sheaf{1})$ exact at $e$, holds for the following data.
  \begin{enumerate}
    \item digraph $X$
    \item cellular sheaf $\sheaf{1}$ of $\semiring{1}$-semimodules on $X$
    \item edge $e$ in $X$ with $X-e$ acyclic
  \end{enumerate}
\end{thm:mfmc}

\textit{Duality gaps}, gaps between the maximum flow-value (homotopy limit) and the minimum cut-value (limit), disappear when the cohomology theory $\dicohomology{\grading}(-;\sheaf{1})$ satisfies a generalized exactness axiom.
An example highlights the failure of homologial exatness in a multicommodity duality gap [Example \ref{eg:gap}].
Local algebraic criteria on $\sheaf{1}$ for $\dihomology{\grading}(-;\sheaf{1})$ to be exact [Lemma \ref{lem:exactness}] translate into simple, MFMC-like observations.
The following corollary subsumes an Algebraic MFMC for acyclic flows taking values in a weakly cancellative $\delta$-monoid \cite[Theorem 3.1]{frieze1984algebraic} and hence an Algebraic MFMC for decomposable flows taking values in a general $\delta$-monoid \cite[Theorem 2.1]{frieze1984algebraic} after applying Proposition \ref{prop:universal.coefficients}.

\begin{cor:mfmc}[Algebraic MFMC]
  The equality
  $$\bigvee_{\substack{\text{finite and}\\\text{$S$-decomposable}\\\text{flow $\phi$}}}\backspace\backspace\phi(e)=\backspace\;\;\bigwedge_{\substack{\text{$e$-cut $C$}}}\sum_{c\in C}\omega_c.$$
  holds for the following data.
  \begin{enumerate}
    \item naturally complete inf-semilattice ordered $S$-semimodule $M$
    \item digraph $(X;\edgeweights)$ with edges weighted by elements in $M$
    \item edge $e$ in $X$ with $X-e$ acyclic
  \end{enumerate}
\end{cor:mfmc}

\section{Outline}
\addtocontents{toc}{\protect\setcounter{tocdepth}{1}}
The paper respectively defines and constructs examples of sheaves on digraphs, constructs and investigates (co)homology theories of such sheaves, and finally interprets such invariants as familar constructs from network theory. 

\subsection{Coefficients}
The coefficients of the directed (co)homology theories in this paper are \textit{partial $S$-sheaves}, cellular sheaves of partial $\semiring{1}$-semimodules on digraphs.
\textit{Semimodules} over \textit{semirings} generalize modules over rings by dropping the requirement that additive inverses exist. 
Semimodules, into which embed diverse algebraic varieties \cite{stronkowski2009embedding}, can encode numerical quantities (natural numbers under addition), stochastic quantities (distributions under convolution), or order-theoretic measurements (lattices under binary infima).
Optimization problems negatively encode constraints as additive ideals in semimodules.
\textit{Partial semimodules} over \textit{semirings}, not standard in the literature, generalize semimodules by dropping the requirement that addition and scalar multiplication be defined everywhere.
Partial semimodules are exactly the complements of additive ideals in semimodules [Proposition \ref{prop:partial.semimodules}, \ref{prop:partial.homomorphisms}].

Semimodules, much less partial semimodules, much less partial $S$-sheaves, lack many properties of Abelian categories typically used in constructions of (co)homology theories.
For example, semimodules over a general semiring do not contain enough injectives \cite{golan1992theory}.
For another example, exact sequences of semimodules (regarded as pointed sets) do not describe general equalizers and coequalizers of semimodules.
However, partial semimodules over a general semiring contain enough projectives, kernel-pairs of partial semimodules generalize short exact sequences, and tensor products of semimodules generalize tensor products of modules and extend to actions on partial semimodules [Proposition \ref{prop:tensors}]. 
This action is used to twist semimodules of (co)chains by coefficient sheaves of partial semimodules.

\subsection{(Co)homology}
Abelian sheaf (co)homology generalizes [Proposition \ref{prop:classical.cohomologie}] for directed algebraic topology \cite{fajstrup2006algebraic}.
The functors $\dicohomology{0},\dicohomology{1}$ equalize and coequalize coboundary operators from $0$-cochains to $1$-cochains.
The functors $\dihomology{1},\dihomology{0}$ equalize and coequalize dual boundary operators from $1$-chains to $0$-chains.
Under certain local algebraic or local geometric [Example \ref{eg:degree.bounds}] criteria, first directed homology coincides with a degree $1$ homology theory for higher categorical structures \cite{patchkoria2000homology}.

\begin{thm:equalizer}
  For each digraph $\digraph{1}$, there exists a dotted arrow in the diagram
  \begin{equation*}
    \xymatrix@C=2pc{
      \dihomology{1}(\digraph{1};\sheaf{1})
        \ar@{.>}[r]
    & \directsum_{e\in\edges{\digraph{1}}}\dicohomology{0}(\langle e\rangle;\sheaf{1})
        \ar@<.7ex>[rrrr]^-{\directsum_v\sum_{\delminus{.4} e=v}\dicohomology{0}(\delminus{.4} e\subset\langle e\rangle)}
        \ar@<-.7ex>[rrrr]_-{\directsum_v\sum_{\delplus{.4} e=v}\dicohomology{0}(\delplus{.4}e\subset\langle e\rangle)}
    & & &
    & \directsum_{v\in\vertices{\digraph{1}}}\sheaf{1}(v),
    }
  \end{equation*}
  natural in partial $S$-sheaves $\sheaf{1}$ on $X$, commute.
  Furthermore, the above diagram is an equalizer diagram if the ground semiring $S$ is a ring or for each vertex $v$ in $X$, $\sheaf{1}(v)$ is flat, the in-degree of $v$ is $1$, or the out-degree of $v$ is $1$.
\end{thm:equalizer}

In particular, $\dihomology{\grading}$ classifies directed loops on finite digraphs for $S=\N$ [Corollary \ref{cor:dicycles}], and coincides with a cosheaf homology for finite digraphs and ground rings \cite{bredon1997sheaf}.
Local homology $S$-semimodules $\dihomology{1}((X,X-x);\constantsheaf{S})$ define an \textit{orientation sheaf} $\orientation{S}$ over semirings $S$, generalizing classical orientation sheaves over rings.
Unlike orientation sheaves over rings on graphs, orientation sheaves over semirings on digraphs are generally not stalkwise free [Figure \ref{fig:freeness}].

\begin{prop:universal.coefficients}[Universal Coefficients]
  There exists an isomorphism
  \begin{equation*}
    \dihomology{1}(\digraph{1};\sheaf{1})\tensor{1}\semimodule{1}\cong\dihomology{1}(\digraph{1};\sheaf{1}\tensor{1}\constantsheaf{\semimodule{1}})
  \end{equation*}
  natural in partial $S$-sheaves $\sheaf{1}$ and flat $\semiring{1}$-semimodules $\semimodule{1}$.
\end{prop:universal.coefficients}

While cellular sheaves of (co)chains are not subdivision invariant, all four (co)homology functors $\dicohomology{0},\dicohomology{1},\dihomology{0},\dihomology{1}$ are subdivision invariant [Propositions \ref{prop:sd-cohomologie} and \ref{prop:sd-homologie}] and hence are really invariants of semimodule-valued sheaves on \textit{directed geometric realizations} \cite{fajstrup2006algebraic} of digraphs.

Connecting homomorphisms
$$\connectinghom_-,\connectinghom_+:\dihomology{1}((X,U);\sheaf{1})\ra \dihomology{0}(U;\sheaf{1}),\quad
  \connectinghom_-,\connectinghom_+:\dicohomology{0}(C;\sheaf{1})\ra \dicohomology{1}((X,C);\sheaf{1})$$
collectively generalize the connecting homomorphism for ordinary (co)homology [Propositions \ref{prop:Ab.connecting.maps} and \ref{prop:Ab.coconnecting.maps}].
Like the classical connecting homomorphism, $\connectinghom_-,\connectinghom_+$ fit into a generalization \cite{patchkoria2000homology} of a chain complex that is exact under certain conditions [Lemma \ref{lem:exactness}].
Unlike the classical connecting homomorphism, that generalized sequence generally is neither exact [Example \ref{eg:exactness}] nor even $1$-natural in pairs of digraphs [Example \ref{eg:non-cannonicity}].

\subsection{Networks}
\textit{Networks} refer to graphs equipped with extra structure, like directionality or capacity constraints.
Often that structure is encoded as edge weights with some implicit interpretation.
For example, the positive real weights of a coding network represent the maximum bit-rate for each channel.
For another example, the positive real weights of a multicommodity network represent an upper bound on the possible weighted sums of quantities [Example \ref{eg:multidimensional.constraints}].
Sometimes that structure includes possible operations at the vertices.
For example, the vertices of a logical circuit usually carry some simple logical gates [Example \ref{eg:circuits}].
Partial $S$-sheaves explicitly model such structures as local partial $S$-semimodules of allowable structures, like orientations, local messages, multiple quantities or logical states.

Unlike edge weights, the local values $\dicohomology{0}(C;\sheaf{1})$ that a general sheaf $\sheaf{1}$ takes at different regions $C$ of a network $X$ are not readily comparable to one another.
The images $\cutvalue{C}{\sheaf{1}}$ of homomorphisms $\dicohomology{0}(C;\sheaf{1})\ra\dicohomology{1}(X-e;\sheaf{1})$, the values of $\sheaf{1}$ modulo parallel transport up to orientation [Theorem \ref{thm:pd}], are readily comparable.
Intuitively, \textit{$e$-cuts} are those regions $C$ in a network where $\cutvalue{C}{\sheaf{1}}$ sufficiently samples global information measured in $[e]_{\sheaf{1}}$.
Formally, \textit{$e$-cuts} are the supports of $1$-cocycles in $\constantsheaf{S}$ cohomologous to a canonical $1$-cocycle supported at an edge $e$ [Lemma \ref{lem:cuts}]; in a certain sense, $e$-cuts in a digraph $X$ are (supports to representatives of the) Verdier dual to restriction
$$\ncdicohomology{0}(e\subset X;\constantsheaf{\Z}):\ncdicohomology{0}(X;\constantsheaf{\Z})\ra\ncdicohomology{0}(e;\constantsheaf{\Z})=\Z.$$

Classical real flows on a weighted digraph straightforwardly generalize to \textit{$\sheaf{1}$-flows}; capacity constraints are encoded by the sheaf $\sheaf{1}$ and flow-conservation generalizes to an equalizer condition.
Decomposability, finiteness, and acyclicity of flows also readily generalize.
In particular, an $\omega$-flow on a finite digraph $(X;\omega)$ weighted in a commutative monoid that decomposes into loops is precisely an $\omega$-flow that is locally $\N$-decomposable.

\begin{prop:flows}
  For each partial $S$-sheaf $\sheaf{1}$ on a digraph $X$, 
  $$\dihomology{1}(X;\sheaf{1})=\substack{\text{finite and}\\\text{locally decomposable}\\\text{$\sheaf{1}$-flows}}$$
  naturally.
  The above partial $S$-semimodule conatins all $\sheaf{1}$-flows for $X$ compact and either $S$ a ring or for each $v\in V_X$, the in-degree of $v$ is $1$, the out-degree of $v$ is $1$, or $\sheaf{1}(v)$ is flat.
\end{prop:flows}

The operation of assigning feasible values to semimodules of local flows is a colimit, the construction of flows from local flows is a limit, and colimits and limits do not generally commute.
\textit{Homotopy limits} [Proposition \ref{prop:holim}] circumvent these difficulties.
\textit{Duality gaps}, discrepencies between maximum flow-values and minimum cut-values in networks, can arise when the calculation of minima, a limit, does not amount to a homotopy limit.
The relationship between duality gaps and the failure of exactness in homology is highlighted in the multicommodity setting [Example \ref{eg:gap}].

\subsection{Conventions}
This paper ocassionally abuses notation and conflates an element $x$ in a set with $\{x\}$ and in particular sometimes lets $X-x$ denote $X-\{x\}$.
A diagram, some of whose arrows are written immediately stacked next to one another like in (\ref{eqn:relationship}), \textit{jointly commutes} if each of the two associated diagrams, obtained by removing all top arrows or all bottom arrows or all left arrows or all right arrows, commutes.

\addtocontents{toc}{\protect\setcounter{tocdepth}{2}}

\section{Coefficients}\label{sec:coefficients}
The coefficients of the (co)homology theories introduced in this paper are \textit{cellular sheaves} of \textit{partial $\semiring{1}$-semimodules} over \textit{digraphs}.

\subsection{Partial semimodules}\label{subsec:semimodules}
The category of commutative monoids and monoid homomorphisms is a closed monoidal category, whose closed structure sends a pair $(A,B)$ of commutative monoids to the commutative monoid of monoid homomorphisms $A\ra B$ with addition defined pointwise \cite{ci2002tensor}.
A \textit{semiring} is a monoid object in that category.
Concretely, a semiring is a set $S$ equipped with distinguished elements $0,1\in S$ and a pair $+_S,\times_S:S\times S\ra S$ of associative operations such that the following equations hold for all $x,y,z\in S$.
\begin{align*}
  x\times_S(y+_Sz) &= (x\times_Sy)+_S(x\times_Sz)\\
  0\times_Sx &= 0\\
  0+_Sx &= x\\
  x+_Sy &= y+_Sx\\
  1\times_Sx &= x\\
  x\times_S1 &= x
\end{align*}

Henceforth this paper takes all semirings to be commutative monoid objects; the multiplication $\times_S$ of a semiring $S$ is assumed to be commutative. 
Fix a semiring $S$.
An \textit{$S$-semimodule} is a module object over $S$.
Let $\COEFFICIENTS=\langle\COEFFICIENTS,\tensor{1},S\rangle$ denote the closed monoidal category of $S$-semimodules and $S$-homomomorphisms between them whose closed structure $\hom_S(M,N)$ sends a pair $(M,N)$ of $S$-semimodules to the $S$-semimodule of $S$-homomorphisms $M\ra N$ with addition and scalar multipliation defined pointwise.
An $\semiring{1}$-semimodule $\semimodule{1}$ is \textit{flat} if
$$-\tensor{1}\semimodule{1}:\COEFFICIENTS\ra\COEFFICIENTS$$
preserves equalizers.
Let $\semiring{1}[X]$ denote the \textit{free $S$-semimodule generated by a set $X$}, the coproduct in $\COEFFICIENTS$ of a family of copies of $S$ indexed by $X$; each $x\in X$ is identified with the generator $1$ of the $x$-indexed copy of $S$ in $S[X]$.
The category $\COEFFICIENTS$ is complete and cocomplete; moreover, filtered colimits commute with finite limits and finite products coincide with finite coproducts.

\begin{thm:flatness}
  \label{thm:flatness}
  The following are equivalent for an $\semiring{1}$-semimodule $\semimodule{1}$.
  \begin{enumerate}
    \item The $\semiring{1}$-semimodule $\semimodule{1}$ is flat.
    \item The $\semiring{1}$-semimodule $\semimodule{1}$ is a filtered colimit of free $\semiring{1}$-semimodules generated by finite sets.
  \end{enumerate}
\end{thm:flatness}

\begin{eg}
  \label{eg:flatness}
  Let $\Lambda$ denote the \textit{Boolean semiring}, the semiring $\{0,1\}$ such that
  $$x+_\Lambda y=\max(x,y),\quad x\times_\Lambda y=\max(x,y).$$
  A $\Lambda$-semimodule is a poset in which every finite set has a least upper bound; the monoid addition describes binary suprema.  
  The below Hasse diagrams describe $\Lambda$-semimodules, the left flat and the right not flat.
  \begin{center}
  \begin{tikzpicture}[scale=.7]
  \node (one) at (0,2) {$1$};
  \node (a) at (-2,0) {$\cdot$};
  \node (d) at (2,0) {$\cdot$};
  \node (zero) at (0,-2) {$0$};
  \draw (zero) -- (a) -- (one) -- (d) -- (zero);
  \end{tikzpicture}  
  \hspace{.1in}
  \begin{tikzpicture}[scale=.7]
  \node (one) at (0,2) {$1$};
  \node (a) at (-3,0) {$\lambda_1$};
  \node (b) at (-1,0) {$\lambda_2$};
  \node (c) at (1,0) {$\lambda_3$};
  \node (d) at (3,0) {$\lambda_4$};
  \node (zero) at (0,-2) {$0$};
  \draw (zero) -- (a) -- (one) -- (b) -- (zero) -- (c) -- (one) -- (d) -- (zero);
  \end{tikzpicture}
  \end{center}
\end{eg}

\begin{eg}
  Over $\N$, $\N$ and $\R^{\geqslant 0}$ are flat while $\Z$ and $\R$ are not flat.
\end{eg}

Some of the algebraic structure of a semimodule is naturally described in terms of a natural preorder.
The \textit{natural preorder} on an $S$-semimodule $M$ is the preorder $\leqslant_M$ on the underlying set of $M$ whose relations are of the form
$$x\leqslant_M\lambda x+y,\quad x,y\in M,\;\lambda\in S-0.$$
An \textit{additive ideal} in an $S$-semimodule $M$ is a subset $I\subset M$ such that $(\lambda\times_Mx)+_My\in I$ for all $x\in M$, $y\in I$ and $0\neq\lambda\in S$.
In other words, additive ideals are the upper sets with respect to the natural preorder.
An $S$-semimodule $M$ is \textit{naturally complete} if it has all unique infima and unique suprema with respect to its natural preorder.
An $S$-semimodule $M$ is \textit{naturally inf-semilattice ordered} if every pair $x,y\in M$ admit a unique greatest lower bound $x\wedge y$ with respect to the natural preorder $\leqslant_M$ and $x\wedge (y+_Mz)=(x\wedge y)+_Mz$ for all $x,y,z\in M$.

A \textit{partial $S$-semimodule} will mean a set $M$ equipped with partial functions
$$+_M:M\times M\rel M,\quad\times_M:S\times M\rel M,$$
respectively called \textit{addition} and \textit{scalar multiplication}, from Cartesian products of underlying sets and distinguished element $0\in M$ such that in each of the following equations, the one side exists whenever the other side exists.
\begin{align}
  \label{eqn:axiom.first}0+_Mm &= m\\
  1\times_Mm &= m\\
  \label{eqn:axiom.associativity}(x+_My)+_Mz &= x+_M (y+_Mz)\\
  x+_My &= y+_Mx\\
  (\lambda_1+_S\lambda_2)\times_M x &= (\lambda_1\times_Mx)+_M(\lambda_2\times_Mx)\\
  (\lambda_1\times_S\lambda_2)\times_M x &= \lambda_1\times_M(\lambda_2\times_Mx)\\
  \lambda\times_M(x+_My) &= (\lambda\times_Mx)+_M(\lambda\times_My)\\
  0\times_Mm &= 0\\
  \label{eqn:axiom.last}\lambda\times_M0 &= 0
\end{align}

An $S$-semimodule is a partial $S$-semimodule whose addition and scalar multiplication define functions.
A \textit{partial $S$-subsemimodule} $A$ of a partial $S$-semimodule $B$ is an $S$-semimodule such that $A\subset B$ and addition and scalar multiplication on $A$ are restrictions and corestrictions of addition and scalar multiplication on $B$.
A \textit{partial $S$-semimodule} $A$ \textit{presents} an $S$-semimodule $B$ if $B$ is the quotient of the free $S$-semimodule $S[A]$ generated by the elements of $A$ modulo the smallest $S$-semimodule congruence $\equiv$ such that 
\begin{equation}
  \label{eqn:rewriting}
  (\lambda_1\times_{S[A]}a_1)+_{S[A]}(\lambda_2\times_{S[A]}a_2)\equiv(\lambda_1\times_{A}a_1)+_{A}(\lambda_2\times_{A}a_2)
\end{equation}
whenever the right side is defined.
Write $\langle M\rangle$ for the $S$-semimodule presented by a partial $S$-semimodule.

\begin{prop}
  \label{prop:partial.semimodules}
  The following are equivalent for a set $M$ equipped with partial functions
  $$+_M:M\times M\rel M,\quad\times_M:S\times M\rel M.$$
  \begin{enumerate}
    \item\label{item:partial.semimodules.1} $M$ is a partial $S$-semimodule.
    \item\label{item:partial.semimodules.2} $+_M,\times_M$ are restrictions and corestrictions of operations in an $S$-semimodule $N$ in which the complement of $M$ is an additive ideal in $N$.
  \end{enumerate}
  Given (\ref{item:partial.semimodules.1}), $N$ can be chosen in (\ref{item:partial.semimodules.2}) so that $M$ presents $N$.
\end{prop}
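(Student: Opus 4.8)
The plan is to prove the two implications separately: $(\ref{item:partial.semimodules.2})\Rightarrow(\ref{item:partial.semimodules.1})$ is a direct check, while $(\ref{item:partial.semimodules.1})\Rightarrow(\ref{item:partial.semimodules.2})$, together with the ``$M$ presents $N$'' refinement, is carried by an auxiliary commutative monoid.

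For $(\ref{item:partial.semimodules.2})\Rightarrow(\ref{item:partial.semimodules.1})$, suppose $N$ is an $S$-semimodule with $M\subset N$, $N-M$ an additive ideal, and $+_M,\times_M$ the restrictions and corestrictions of $+_N,\times_N$. I would verify each of (\ref{eqn:axiom.first})--(\ref{eqn:axiom.last}) in its biconditional form. The observation making this routine is that additive ideals are exactly the upsets of the natural preorder $\leqslant_N$, so that whenever a composite expression (for instance $(x+_My)+_Mz$, evaluated via $+_N$) lies in $M$, every subexpression occurring inside it lies in $M$ as well, since each such subexpression is $\leqslant_N$ the composite. Hence, for each axiom, existence of either side collapses to the single requirement that the value computed in $N$ lie in $M$, after which equality is inherited from $N$; the axioms (\ref{eqn:axiom.first}), $0\times_Mm=0$ and $\lambda\times_M0=0$ use only that $0\in M$.

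For $(\ref{item:partial.semimodules.1})\Rightarrow(\ref{item:partial.semimodules.2})$, I would take $N=\langle M\rangle=S[M]/{\equiv}$ with the canonical map $\iota\colon M\to N$, $a\mapsto[\delta_a]$, so that ``$M$ presents $N$'' holds by the choice of $N$; it then remains to show $\iota$ is injective, $N-\iota(M)$ is an additive ideal, and $+_M,\times_M$ agree with the restrictions and corestrictions of $+_N,\times_N$ along $\iota$. The device is the pointed commutative monoid $\hat M=M\sqcup\{\infty\}$ whose addition $\oplus$ extends $+_M$ and makes $\infty$ absorbing; associativity of $\hat M$ is precisely the biconditional form of (\ref{eqn:axiom.associativity}). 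Writing $\lambda\odot a$ for $\lambda\times_M a$ when defined, for $0$ when $\lambda=0$, and for $\infty$ otherwise, the set map $\mathrm{ev}\colon S[M]\to\hat M$, $f\mapsto\bigoplus_a f(a)\odot a$ (a finite sum), is \emph{additive}, $\mathrm{ev}(f+g)=\mathrm{ev}(f)\oplus\mathrm{ev}(g)$, by the distributivity axiom $(\lambda_1+_S\lambda_2)\times_Mx=(\lambda_1\times_Mx)+_M(\lambda_2\times_Mx)$ in biconditional form; and it satisfies $\mathrm{ev}(\lambda u)=\mathrm{ev}(\lambda v)$ for every generating pair $(u,v)=\bigl(\lambda_1\delta_{a_1}+\lambda_2\delta_{a_2},\,\delta_{(\lambda_1\times_Ma_1)+_M(\lambda_2\times_Ma_2)}\bigr)$ of $\equiv$ and every $\lambda\in S$, by the axioms $\lambda\times_M(x+_My)=(\lambda\times_Mx)+_M(\lambda\times_My)$ and $(\lambda_1\times_S\lambda_2)\times_Mx=\lambda_1\times_M(\lambda_2\times_Mx)$. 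Since the congruence generated by these pairs is the equivalence relation generated by all $(\lambda u+h,\,\lambda v+h)$ with $(u,v)$ as above, $\lambda\in S$ and $h\in S[M]$, additivity of $\mathrm{ev}$ forces $\mathrm{ev}$ to be constant on $\equiv$-classes, so $\mathrm{ev}$ descends to a set map $\Phi\colon N\to\hat M$ with $\Phi\circ\iota$ the inclusion $M\hookrightarrow\hat M$; in particular $\iota$ is injective. Because $\mathrm{ev}$ is additive and $\infty$ absorbing, $\Phi^{-1}(\infty)$ is an additive ideal, so it suffices to prove $\Phi^{-1}(\infty)=N-\iota(M)$, equivalently $\Phi^{-1}(M)=\iota(M)$.

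The crux, and the step I expect to be the main obstacle, is the resulting reduction lemma: \emph{if $\mathrm{ev}(f)\in M$ then $[f]=\iota(\mathrm{ev}(f))$}. I would prove it by first using $\lambda_i\delta_{a_i}\equiv\delta_{\lambda_i\times_Ma_i}$ (available term by term since $\mathrm{ev}(f)\neq\infty$ forces each $\lambda_i\times_Ma_i$ to exist) to rewrite $[f]$ as the class of a sum $\delta_{c_1}+\cdots+\delta_{c_k}$ of single generators with $c_1\oplus\cdots\oplus c_k\in M$, and then collapsing that sum one adjacent pair at a time via $\delta_{c}+\delta_{c'}\equiv\delta_{c+_Mc'}$, observing that at each stage the partial sum that must be defined is defined precisely because the corresponding left-associated partial sum in $\hat M$ is not $\infty$; the final generator equals $\mathrm{ev}(f)$ by $\equiv$-invariance of $\mathrm{ev}$. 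The delicate point is that this collapse must succeed whatever the pattern of defined partial sums happens to be, and that is exactly what the biconditional form of (\ref{eqn:axiom.associativity}) supplies — this is the place where the partial-semimodule axioms are genuinely used rather than merely copied from $S$-semimodules. Granting the lemma, $\Phi^{-1}(M)=\iota(M)$ follows, hence the additive-ideal property; and evaluating $\mathrm{ev}$ on $\delta_a+\delta_b$ and on $\lambda\delta_a$ shows that $\iota(a)+_N\iota(b)$ (respectively $\lambda\times_N\iota(a)$) lies in $\iota(M)$ exactly when $a+_Mb$ (respectively $\lambda\times_Ma$) is defined and then equals $\iota$ of it, which is the asserted compatibility of $+_M,\times_M$ with $+_N,\times_N$.
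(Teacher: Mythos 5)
Your proposal is correct, and for the substantive direction $(\ref{item:partial.semimodules.1})\Rightarrow(\ref{item:partial.semimodules.2})$ it takes a genuinely different route from the paper's. Both arguments use the same candidate $N=\langle M\rangle=S[M]/{\equiv}$, so the ``$M$ presents $N$'' refinement is automatic either way, and your $(\ref{item:partial.semimodules.2})\Rightarrow(\ref{item:partial.semimodules.1})$ is the paper's axiom-by-axiom verification repackaged once and for all through the observation that additive ideals are the upper sets of the natural preorder. The difference lies in how one shows $M$ injects into $\langle M\rangle$ with complement an additive ideal: the paper asserts that the defining relations (\ref{eqn:axiom.first})--(\ref{eqn:axiom.last}) and (\ref{eqn:rewriting}) form a confluent, terminating rewriting system with the elements of $M$ among the normal forms, and reads both claims off from normal forms --- confluence being asserted rather than checked. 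You instead construct the absorbing-element monoid $\hat M=M\sqcup\{\infty\}$ (associative precisely because of the biconditional form of (\ref{eqn:axiom.associativity})) and the additive evaluation $\Phi\colon N\to\hat M$, which serves simultaneously as a retraction onto $M$ (hence injectivity of $\iota$) and as a detector of the ideal, since additivity of $\Phi$ together with $\infty$ absorbing immediately makes $\Phi^{-1}(\infty)$ an upper set; your reduction lemma then identifies $\Phi^{-1}(M)$ with $\iota(M)$, and the collapse in its proof is legitimate in any order exactly because $\hat M$ is honestly associative and commutative, with $\infty$ absorbing guaranteeing that each intermediate partial sum is defined. The checks your route requires --- that $\mathrm{ev}$ is additive and constant on generating pairs, and that the congruence generated by the pairs is the reflexive-symmetric-transitive closure of their translates $(\lambda u+h,\lambda v+h)$ (which uses commutativity of $S$, as the paper assumes) --- are each a direct transcription of a single partial-semimodule axiom, so nothing is left to trust. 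In short, your argument replaces an unverified confluence claim by an explicit invariant and is, to that extent, the more complete proof; the paper's is shorter if one grants the rewriting-system assertion.
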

\begin{proof}
  Let $\lambda$ denote an element in $S$ and $m,m_1,m_2,m_3$ denote elements in $M$.

  Assume (\ref{item:partial.semimodules.1}).

  Let $F$ denote the free $S$-semimodule generated by the underlying set of $M$, $\equiv$ be the semimodule congruence on $F$ such that $M$ presents $F/\equiv$, and $N=F/\equiv$.
  The rules (\ref{eqn:axiom.first})-(\ref{eqn:axiom.last}) and (\ref{eqn:rewriting}) generate $\equiv$ and form a confluent and terminating rewriting system with normal forms including the elements of $M$.
  Hence $M$ is isomorphic to its image in $N$, a partial $S$-subsemimodule of $N$.
  Moreover $N-(M-0)$ is an additive ideal because in each of the rewriting rules (\ref{eqn:axiom.first})-(\ref{eqn:axiom.last}) and (\ref{eqn:rewriting}), the left-hand side does not contain summands in $F$ representing elements in $N-(M-0)$ when the right-hand side does not represent an element in $M$.
  Hence (\ref{item:partial.semimodules.2}).

  Assume (\ref{item:partial.semimodules.2}).

  Observe $0\in M$ by $N-(M-0)$ an $S$-semimodule.

  For all $m$, $0+_Nm=m$ and $0\times_Nm=0$.  
  Hence (\ref{eqn:axiom.first}) and (\ref{eqn:axiom.last}).

  Suppose $(m_1+_Mm_2)+_Mm_3$ exists.
  Then $m_1+_Mm_2$ exists.
  Then $m_2+_Mm_3$ exists because otherwise, $m_1+_N(m_2+_Nm_3)\notin M$ by $M-(N-0)$ an additive ideal in $N$, contradicting $m_1+_N(m_2+_Nm_3)=(m_1+_Nm_2)+_Nm_3=(m_1+_Mm_2)+_Mm_3\in M$.
  Hence $(m_1+_Mm_2)+_Mm_3=(m_1+_Nm_2)+_Nm_3=m_1+_N(m_2+_Nm_2)=m_1+_N(m_2+_Mm_3)=m_1+_M(m_2+_Mm_3)$.
  Hence (\ref{eqn:axiom.associativity}).

  The other conditional equations to check for (1) similarly follow.
\end{proof}

While partial $S$-semimodules are module objects over $S$, regarded as a monoid object in a monoidal category of sets and partial functions, \textit{partial $S$-homomorphisms} are more general than module homomorphisms between such module objects.
A (\textit{partial}) \textit{$S$-homomorphism} is a (partial) function $\psi:A\rel B$ from a partial $S$-semimodule $A$ to a partial $S$-semimodule $B$ such that $\psi(0)=0$ and the following holds whenever the left side is defined:
$$\psi\left(\left(\lambda_1\times_Ax_1\right)+_A\left(\lambda_2\times_Ax_2\right)\right)=\left(\lambda_1\times_B\psi(x_1)\right)+_B\left(\lambda_2\times_B\psi(x_2)\right)$$

\vspace{.1in}
Let $\CONSTRAINTS$ be the category of partial $S$-semimodules and partial $S$-homomorphisms.
The construction $\langle-\rangle$ naturally extends to a functor
$$\langle-\rangle:\CONSTRAINTS\ra\COEFFICIENTS.$$

The functor $\langle-\rangle$ acts like a left adjoint to the forgetful functor $\COEFFICIENTS\ra\CONSTRAINTS$.  
There exist isomorphisms $M\cong\langle M\rangle$ natural in $S$-semimodules $M$.
A \textit{partial identity} $\psi:A\rel B$ from a partial $S$-semimodule $A$ to a partial $S$-subsemimodule $B$ of $A$ is the partial function $A\rel B$ with $\psi(b)=b$ for all $b\in B$ and $\psi(a)$ undefined for all $a\in A-B$.
Partial identities are partial $S$-homomorphisms.
There exist inclusions $M\ira\langle M\rangle$ and partial identities $\langle M\rangle\rel M$ natural in partial $S$-semimodules $M$.
The existence of such natural partial $S$-homomorphisms can be used to show that $\CONSTRAINTS$ inherits various properties from $\COEFFICIENTS$, like having enough projectives.

\begin{prop}
  \label{prop:partial.homomorphisms}
  Consider partial $S$-semimodules $A$ and $B$.
  A partial function
  $$\psi:A\rel B$$
  is a partial $S$-homomorphism if and only if $\psi$ factors as the composite of a partial identity followed by an $S$-homomorphism between partial $S$-semimodules defined as the restriction and corestriction of an $S$-homomorphism between $S$-semimodules.
\end{prop}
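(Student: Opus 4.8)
The plan is to prove both implications by unwinding the definition of a partial $S$-homomorphism and invoking Proposition~\ref{prop:partial.semimodules}, which identifies every partial $S$-semimodule with a partial $S$-subsemimodule of an honest $S$-semimodule whose complement is an additive ideal.

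For the easy direction (``if''), suppose $\psi:A\rel B$ factors as a partial identity $\iota:A\rel A'$ onto a partial $S$-subsemimodule $A'\subseteq A$ followed by a map $\bar\phi:A'\to B$ that is the restriction and corestriction of an $S$-homomorphism $\phi:N_{A'}\to N_B$ between ambient $S$-semimodules. One must check $\psi(0)=0$ — clear since $0\in A'$ and $\phi(0)=0$ — and the defining functional equation: whenever $(\lambda_1\times_A x_1)+_A(\lambda_2\times_A x_2)$ is defined in $A$, its image under $\iota$ is defined only if both $x_1,x_2$ and the combination lie in $A'$; but since the complement of $A'$ in $A$ is an additive ideal (as $A'$ is a partial $S$-subsemimodule, hence by Proposition~\ref{prop:partial.semimodules} its complement in any presenting semimodule is an additive ideal, and this passes to $A$), if the combination lies in $A'$ then so do $x_1$ and $x_2$. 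Then one computes in $N_{A'}$ and $N_B$ using that $\phi$ is an $S$-homomorphism, and corestricts back to $B$. The only subtlety is tracking which terms are actually defined, and the additive-ideal property of complements handles exactly that.

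For the main direction (``only if''), given a partial $S$-homomorphism $\psi:A\rel B$, let $A'\subseteq A$ be the domain of definition of $\psi$. The first step is to show $A'$ is a partial $S$-subsemimodule of $A$: if $(\lambda_1\times_A x_1)+_A(\lambda_2\times_A x_2)$ is defined in $A$ and $x_1,x_2\in A'$, then the right-hand side of the homomorphism equation is defined in $B$, forcing the left-hand side — hence the combination — to be defined, i.e.\ in $A'$; taking $\lambda_2=0$ shows $A'$ is closed under the relevant scalar multiples and $0\in A'$. Let $\iota:A\rel A'$ be the partial identity. Then $\psi=\bar\psi\circ\iota$ where $\bar\psi:A'\to B$ is a genuine ($S$-)homomorphism of partial $S$-semimodules. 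Finally, I would realize $\bar\psi$ as the restriction/corestriction of an $S$-homomorphism between $S$-semimodules: by Proposition~\ref{prop:partial.semimodules} choose presenting semimodules $A'\hookrightarrow\langle A'\rangle$ and $B\hookrightarrow\langle B\rangle$; the composite $A'\xra{\bar\psi}B\hookrightarrow\langle B\rangle$ is a partial $S$-homomorphism into an $S$-semimodule, and by the universal property of the presentation $\langle A'\rangle$ (the relations (\ref{eqn:rewriting}) defining $\langle A'\rangle$ are exactly what is needed to extend such a map) it extends uniquely to an $S$-homomorphism $\langle A'\rangle\to\langle B\rangle$, whose restriction to $A'$ and corestriction to $B$ recovers $\bar\psi$.

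The main obstacle is the last step: verifying that $\bar\psi:A'\to B$ extends to an $S$-homomorphism $\langle A'\rangle\to\langle B\rangle$. One must check that the assignment $a\mapsto\bar\psi(a)$ on generators respects the congruence $\equiv$ generated by (\ref{eqn:rewriting}), i.e.\ that whenever $(\lambda_1\times_{A'}a_1)+_{A'}(\lambda_2\times_{A'}a_2)$ is defined in $A'$ the images satisfy the corresponding identity in $\langle B\rangle$ — but this is precisely the partial-homomorphism equation for $\bar\psi$ composed with the inclusion $B\hookrightarrow\langle B\rangle$, together with the fact that that inclusion is a partial $S$-homomorphism. So the obstacle dissolves into a careful application of the universal property already packaged in the definition of $\langle-\rangle$, and the bookkeeping of domains of definedness via the additive-ideal characterization.
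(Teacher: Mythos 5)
Your ``if'' direction is essentially the paper's argument and is fine modulo bookkeeping. The genuine gap is in the ``only if'' direction, at the step where you claim the domain $A'=A_\psi$ is a partial $S$-subsemimodule. Your argument runs: if $x_1,x_2\in A'$ and $(\lambda_1\times_Ax_1)+_A(\lambda_2\times_Ax_2)$ is defined in $A$, then the right-hand side of the homomorphism equation is defined in $B$, ``forcing'' the left-hand side to be defined, so the combination lies in $A'$. This fails twice. First, the right-hand side need not be defined: $B$ is only a partial semimodule, so $\left(\lambda_1\times_B\psi(x_1)\right)+_B\left(\lambda_2\times_B\psi(x_2)\right)$ can be undefined even when both $\psi(x_i)$ exist. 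Second, the definition of a partial $S$-homomorphism only asserts the implication ``left side defined $\Rightarrow$ equation holds''; you are invoking its converse. The intermediate claim is in fact false: take $\psi$ to be the partial identity $\N\rel\{0,1\}$ (a partial $\N$-homomorphism, per the paper); then $1\in A'=\{0,1\}$ while $1+_{\N}1=2\notin A'$. More fundamentally, ``closed under all defined combinations'' is not what a partial $S$-subsemimodule is --- its operations are restrictions \emph{and corestrictions}, so combinations landing outside the subset are simply undefined there.

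The correct (and the paper's) route is dual: show the \emph{complement} $A-A_\psi$ is an additive ideal. If $x\notin A_\psi$, $x\neq 0$, and $a+_Ax$ were in $A_\psi$, then $\psi(a+_Ax)$ would be defined, the homomorphism property (used in its stated direction) would force $\psi(a)+_B\psi(x)$ to be defined, and in particular $\psi(x)$ would be defined --- contradicting $x\notin A_\psi$. Proposition~\ref{prop:partial.semimodules} then equips $A_\psi$ with the structure of a partial $S$-subsemimodule. The remainder of your ``only if'' direction --- factoring through the partial identity $A\rel A_\psi$ and extending $A_\psi\ra B$ to an $S$-homomorphism $\langle A_\psi\rangle\ra\langle B\rangle$ by checking compatibility with the congruence (\ref{eqn:rewriting}) --- is sound and agrees with the paper.
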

\begin{proof}
  Let $\lambda_1,\lambda_2$ denote elements in $S$ and $a,a_1,a_2$ denote elements in $A$.

  Suppose $\psi$ is a partial $S$-homomorphism.
  Let $A_\psi$ denote the subset of $A$ consisting of all values on which $\psi$ is defined.
  For $x\in A-A_\psi-0$ and $a$, $a+x\in A-A_\psi$; otherwise $\psi(a+_Ax)=\psi(a)+_B\psi(x)$ and hence $\psi(x)$ would be defined, contradicting $x\notin A_\psi$.  
  Hence $A_\psi$ is a partial $S$-subsemimodule of $A$ [Proposition \ref{prop:partial.semimodules}], hence $\psi$ is the composite of the partial identity $A\rel A_\psi$ followed by a partial $S$-homomorphism $A_\psi\ra B$, which extends to an $S$-homomorphism $\langle A_\psi\rangle\ra\langle B\rangle$ [Proposition \ref{prop:partial.semimodules}].

  Conversely, suppose $\psi$ is the composite of a partial identity $S$-homomorphism followed by the restriction and corestriction of an $S$-homomorphism to a function of underlying sets.
  In order to show $\psi$ is a partial $S$-homomorphism, it suffices to consider the case $\psi$ a function of underlying sets defining the restriction and corestriction of an $S$-homomorphism 
  $$\psi':A'\ra B'.$$
  
  For all $\lambda_1,\lambda_2\in S$ and $a_1,a_2\in A$ with $(\lambda_1\times_Aa_1)+_A(\lambda_2\times_Aa_2)$ defined,
  \begin{align*}
     \psi\left(\left(\lambda_1\times_Aa_1\right)+_A\left(\lambda_2\times_Aa_2)\right)\right)
  &= \hat\psi\left(\left(\lambda_1\times_{A'}a_1\right)+_{A'}\left(\lambda_2\times_{A'}a_2\right)\right)\\
  &= \left(\lambda_1\times_{B}\psi'(a_1)\right)+_{B'}\left(\lambda_2\times_{B'}\psi'(a_2)\right)\\
  &= \left(\lambda_1\times_{B}\psi(a_1)\right)+_{B}\left(\lambda_2\times_{B}\psi(a_2)\right).
  \end{align*}
\end{proof}

\begin{cor}
  \label{cor:ab.categories}
  For each ring $S$, $\CONSTRAINTS=\COEFFICIENTS$ is the category of $S$-modules.
\end{cor}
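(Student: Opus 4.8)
The plan is to reduce everything to two facts about modules over a ring. First, over a ring $S$ every $S$-semimodule is automatically an $S$-module: for $n$ in an $S$-semimodule $N$, the axioms give $n +_N (-1)\times_N n = (1 +_S (-1))\times_N n = 0\times_N n = 0$, so $(-1)\times_N n$ is an additive inverse of $n$, $N$ is an abelian group, and $S$-homomorphisms between such objects are exactly module homomorphisms; hence $\COEFFICIENTS$ \emph{is} the category of $S$-modules. So it remains only to identify $\CONSTRAINTS$ with $\COEFFICIENTS$, both on objects and on morphisms. (If $S$ is the zero ring the statement is vacuous: then $m = 1\times m = 0\times m = 0$ forces every partial $S$-semimodule and every $S$-module to be $\{0\}$, so below I assume $1\neq_S 0$.)

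For objects, given a partial $S$-semimodule $M$ I would invoke Proposition \ref{prop:partial.semimodules} to present $M$ as a partial $S$-subsemimodule of an $S$-semimodule $N$ — necessarily an $S$-module, by the previous paragraph — whose complement $N-M$ is an additive ideal, i.e.\ an upper set for the natural preorder $\leqslant_N$. The point is that in an $S$-module over a nonzero ring the natural preorder is indiscrete: for any $y,z\in N$ one has $y\leqslant_N 1\times_N y +_N (z +_N (-1)\times_N y) = z$, a relation of the required form since $1\in S-0$. Therefore the only upper sets of $\leqslant_N$, hence the only additive ideals, are $\emptyset$ and $N$. Since $0\in M$, the complement $N-M$ is a \emph{proper} upper set, so $N-M=\emptyset$, i.e.\ $M=N$ is an $S$-module with everywhere-defined operations. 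Thus partial $S$-semimodules over $S$ are exactly $S$-modules.

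For morphisms, let $\psi:A\rel B$ be a partial $S$-homomorphism between $S$-modules and let $A_\psi\subset A$ be the subset where $\psi$ is defined; by (the proof of) Proposition \ref{prop:partial.homomorphisms}, $A_\psi$ is a partial $S$-subsemimodule of $A$. I claim $A_\psi=A$: if some $x\in A-A_\psi$ and $a\in A$ had $a+_Ax\in A_\psi$, then the defining identity of a partial $S$-homomorphism applied to $1\times_A a +_A 1\times_A x$, whose value $\psi(a+_Ax)$ is defined, would force $\psi(a)+_B\psi(x)$ — and in particular $\psi(x)$ — to be defined, a contradiction; hence $a+_Ax\notin A_\psi$ for \emph{every} $a\in A$, and taking $a=(-1)\times_A x$ yields $0_A=a+_Ax\notin A_\psi$, contradicting $\psi(0)=0$. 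So $\psi$ is total, and evaluating its identity with $\lambda_1=\lambda_2=1$ and with $\lambda_2=0$ shows $\psi$ is an $S$-module homomorphism; the reverse inclusion and agreement of composition are immediate, giving $\CONSTRAINTS=\COEFFICIENTS$. The one step that needs care — the main obstacle — is precisely this last claim $A_\psi=A$: it does \emph{not} follow from $A_\psi$ being a partial $S$-subsemimodule (proper submodules of modules over rings abound), and genuinely uses the stronger closure property $a+_Ax\notin A_\psi$ for all $a\in A$ enjoyed by domains of partial homomorphisms, together with the availability of additive inverses in $A$.
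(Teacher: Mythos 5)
Your proof is correct and follows essentially the same route as the paper's: additive inverses make every $S$-semimodule an $S$-module, the triviality of additive ideals in a module together with Proposition \ref{prop:partial.semimodules} makes every partial $S$-semimodule total, and Proposition \ref{prop:partial.homomorphisms} makes every partial $S$-homomorphism total. The only difference is that you spell out the step $A_\psi=A$ (via the upper-set property of $A-A_\psi$ established inside the proof of Proposition \ref{prop:partial.homomorphisms}, which the bare statement of that proposition does not record) and dispose of the zero-ring case, both of which the paper leaves implicit.
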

\begin{proof}
  Every $S$-semimodule $M$ is an $S$-module because $0=(1-_R1)\times_Mx=x+_M(-1)\times_Mx$ for each $x\in M$ and hence each element in $M$ has an additive inverse.
  The additive ideals in an $R$-module $M$ are $0,M$ and hence every partial $S$-semimodule is an $R$-module [Proposition \ref{prop:partial.semimodules}].
  Moroever, every partial $R$-homomorphism of $R$-modules is an $R$-homomorphism [Proposition \ref{prop:partial.homomorphisms}].
\end{proof}

Coproducts in $\COEFFICIENTS$ extend to operations in $\CONSTRAINTS$ as follows.
The \textit{direct sum}, written
\begin{equation}
  \label{eqn:direct.sum}
  \directsum_{i\in\mathcal{I}}M_i,
\end{equation}
will refer to the partial $S$-semimodule, natural in an $\mathcal{I}$-indexed collection $\{M_i\}_{i\in\mathcal{I}}$ of partial $S$-semimodules, whose underlying set consists of those elements in the Cartesian product of underlying sets whose projections onto all but finitely many factors are $0$, and whose addition and scalar multiplication are defined coordinate-wise; each factor $M_i$ can be regarded as the partial $S$-subsemimodule of (\ref{eqn:direct.sum}) consisting of all terms whose projections onto all $j$-indexed factors are $0$ for $j\neq i$.

\begin{prop}
  \label{prop:co.equalizers}
  The category $\CONSTRAINTS$ is complete and cocomplete.  
  Inclusion
  $$\COEFFICIENTS\ira\CONSTRAINTS$$
  preserves equalizers, coequalizers, and filtered colimits. 
\end{prop}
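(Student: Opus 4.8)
The plan is to construct all small limits in $\CONSTRAINTS$ from products and equalizers and all small colimits from coproducts and coequalizers, transporting the corresponding facts about $\COEFFICIENTS$ across Propositions~\ref{prop:partial.semimodules} and~\ref{prop:partial.homomorphisms}, and then to verify preservation for the three named classes separately. Coproducts are the direct sums $\directsum_i M_i$ already introduced: a cocone $\{f_i:M_i\rel N\}$ extends to $\directsum_i M_i\rel N$ by sending a finitely supported tuple $(x_i)_i$ to the finite sum $\sum_i f_i(x_i)$, and the extension is pinned down because the domain of definition of a partial $S$-homomorphism is a partial $S$-subsemimodule [proof of Proposition~\ref{prop:partial.homomorphisms}]. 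The equalizer of $f,g:A\rra B$ is carried by the subset of $A$ on which $f$ and $g$ are either both undefined or both defined with equal value --- the equalizer in the category of sets and partial functions --- with the partial $S$-semimodule structure restricted from $A$; closure of this subset under the partial operations of $A$ uses only that $f,g$ are partial $S$-homomorphisms, and its universal property is then immediate. Products take slightly more care, since the categorical product of $\{M_i\}$ cannot be the naive Cartesian product of underlying sets (whose projections are total, and hence would force every cone $\{C\rel M_i\}$ to share one domain of definition); one realizes it instead as the set of partially defined tuples $(x_i)_{i\in T}$ ($T\subseteq I$, $x_i\in M_i$), with empty tuple as $0$ and operations defined coordinatewise on the overlap of supports, checking via Proposition~\ref{prop:partial.semimodules} that this is a partial $S$-semimodule realized with additive-ideal complement inside a suitable $S$-semimodule, that the coordinate projections are partial $S$-homomorphisms, and that the resulting cone is universal.

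The coequalizer of $f,g:A\rra B$ is where the real work lies. Any partial $S$-homomorphism $c:B\rel C$ with $cf=cg$ is forced to be undefined on each $f(a)$ with $a$ in the domain of $f$ but not of $g$ (and symmetrically), hence undefined on the additive ideal these generate in $\langle B\rangle$; consistency with the identifications $f(a)\sim g(a)$ at points where both are defined forces that ideal to be saturated in the sense that $f(a)\in\bar I\iff g(a)\in\bar I$. Passing to the smallest such saturated additive ideal $\bar I$ and replacing $B$ by the partial $S$-subsemimodule $B-\bar I$ turns the relevant corestrictions of $f$ and $g$ into honest $S$-homomorphisms $\langle A'\rangle\rra\langle B-\bar I\rangle$, whose coequalizer $q:\langle B-\bar I\rangle\to Q$ exists since $\COEFFICIENTS$ is cocomplete; the coequalizer of $f,g$ in $\CONSTRAINTS$ is the composite of the partial identity $B\rel B-\bar I$, the inclusion $B-\bar I\ira\langle B-\bar I\rangle$, and $q$, whose universal property against arbitrary partial $S$-semimodules must be verified. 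Isolating the additive ideal that every equalizing map is compelled to discard, and then checking universality among \emph{partial} homomorphisms --- where the interplay of the identifications, the scalar-zero axioms, and the allowable domains of definition has to be handled carefully --- is the main obstacle; the product construction is the only other point requiring genuine care. With (co)products and (co)equalizers in hand, $\CONSTRAINTS$ is complete and cocomplete.

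Preservation by $\COEFFICIENTS\ira\CONSTRAINTS$ then proceeds case by case. For honest $S$-homomorphisms $f,g:M\rra N$ the equalizer subset $\{m:f(m)=g(m)\}$ is closed under the total operations of $N$, hence is an $S$-semimodule, so it serves as the equalizer in both categories (no ``both undefined'' locus occurs). For a filtered diagram in $\COEFFICIENTS$, its colimit there is computed on underlying sets (filtered colimits commute with finite limits in $\COEFFICIENTS$); since the structure maps are jointly surjective, a cocone to an arbitrary partial $S$-semimodule factors uniquely through this set-level colimit --- again a total $S$-semimodule --- exhibiting it as the colimit in $\CONSTRAINTS$ as well. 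For coequalizers, totality of $f,g$ makes $\bar I$ empty, so the construction above returns exactly the $\COEFFICIENTS$-coequalizer, which is therefore also the coequalizer in $\CONSTRAINTS$.
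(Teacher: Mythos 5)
Your limit-side constructions track the paper's proof reasonably well (the paper computes $\lim U\circ F$ in the category of sets and partial functions and imposes the terminal compatible partial structure, which subsumes your products and equalizers), and your preservation arguments are in the right spirit. The genuine gap is in the coequalizer, which you rightly identify as the crux: you take the coequalizer object to be the \emph{entire} $\COEFFICIENTS$-coequalizer $q:\langle B-\bar I\rangle\to Q$, whereas the coequalizer in $\CONSTRAINTS$ must be the smallest partial $S$-subsemimodule of $Q$ containing the image of $B-\bar I$. Omitting that last step destroys uniqueness in the universal property. Concretely, take $S=\N$, $B=\{0,1\}\subset\N$ (so $\langle B\rangle\cong\N$), and $f=g=1_B$. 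Your recipe gives $\bar I=\varnothing$ and returns the inclusion $B\ira\langle B\rangle$ as the coequalizer; but the coequalizer of an identical parallel pair is $1_B$ in any category, and the cocone $B\ira\langle B\rangle$ admits several distinct factorizations through $B\ira\langle B\rangle$: both $1_{\langle B\rangle}$ and the composite of the partial identity $\langle B\rangle\rel B$ with the inclusion are partial $\N$-homomorphisms agreeing with the inclusion on $B$, yet they differ at $2$. The paper's proof avoids exactly this by defining the colimit to be the smallest partial $S$-subsemimodule of $\colim\langle-\rangle\circ F$ containing the images of the canonical maps from the $F(d)$, not $\colim\langle-\rangle\circ F$ itself. (Your preservation claim for coequalizers of total maps survives only because there the quotient map is surjective, so the image is already everything.)

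A second, smaller slip: in your product you take the empty tuple as $0$. Since partial $S$-homomorphisms are required to satisfy $\psi(0)=0$, in particular to be defined at $0$, the projections out of your product would then fail to be morphisms of $\CONSTRAINTS$; the zero must be the all-zeros tuple, and the everywhere-undefined tuple should be excluded lest factorizations through the product also fail to be unique. Relatedly, the equalizer carrier you describe is not literally ``closed'' under the operations of $A$ (a sum defined in $A$ may land outside it); one must restrict \emph{and} corestrict the operations and then check the existence axioms, which is what the paper's ``terminal partial structure making the projections partial $S$-homomorphisms'' accomplishes in one stroke.
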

\begin{proof}
  Let $\PARTIALSETS$ be the category of sets and partial functions.
  Let $U$ be the forgetful functor $\CONSTRAINTS\ra\PARTIALSETS$. 

  Consider a diagram $F:\DIAGRAM\ra\CONSTRAINTS$ in $\COEFFICIENTS$. 
  Let $d$ denote a $\DIAGRAM$-object.

  Let $L=\lim\;U\circ F$.
  Let $\pi_d$ denote a universal partial function $L\ra F(d)$.
  The operations $+_L:L\times L\rel L$ and $\times_L:S\times L\rel L$, defined by $x+_Ly=z$ if $\pi_dx+_L\pi_dy,\pi_dz$ are defined and coincide for each $\pi_d$ and $\lambda\times_Lx=y$ if $\pi_d\lambda\times_{F(d)}\pi_dx,\pi_dy$ are defined and coincide for each $d$, is terminal among all operations turning $L$ into a partial $S$-semimodule such that $\pi_d$ is a partial $S$-homomorphism for each $d$ and hence turns turns $L$ into a limit of $F$. 
  Equalizers in $\COEFFICIENTS$ are equalizers in $\CONSTRAINTS$ because equalizers in $\PARTIALSETS$ are equalizers in the category of sets and functions.
  
  Let $\iota_d$ denote a universal partial function $F(d)\ra\langle F(d)\rangle\ra M$.
  Let $Q$ be the smallest partial $S$-subsemimodule of $\colim\langle-\rangle\circ F$ containing the image of $\iota_d$ for each $d$.
  Then $\langle Q\rangle=\colim\langle-\rangle\circ F$ by universal properties of the colimit in $\COEFFICIENTS$.
  Consider a cocone from $F$ to a partial $S$-semimodule $T$.  
  In the diagram
  \begin{equation}
    \label{eqn:coequalizers}
    \xymatrix@C=1pc{
        \langle-\rangle\circ F
          \ar[rr]
          \ar[dr]
        & 
        & \langle Q\rangle
          \ar@{.>}[dl]
      \\
        & \langle T\rangle
      \\
        F
          \ar[rr]
          \ar[dr]
          \ar[uu]
        & 
        & Q
          \ar[uu]
          \ar@{.>}[dl]
      \\
        & T,
          \ar[uu]
    }  
  \end{equation} 
  where all vertical arrows are inclusions, there exists a unique dotted arrow making the upper triangle commute by universal properties of colimits in $\COEFFICIENTS$ and hence there exists a unique dotted arrow making the right parallegram, and hence lower triangle commute.  
  In the case $\DIAGRAM$ filtered or a pair of parallel arrows, the smallest partial $S$-subsemimodule of $\langle Q\rangle$ containing the image of $\iota_d$ for each $d$ is $\langle Q\rangle$.  
  Hence coqualizers and filtered colimits in $\CONSTRAINTS$ extend coequalizers and filtered colimits in $\COEFFICIENTS$.
\end{proof}

\begin{prop}
  \label{prop:tensors}
  The tensor product on $\COEFFICIENTS$ uniquely extends to a functor
  $$\tensor{1}:\COEFFICIENTS\times\CONSTRAINTS\ra\CONSTRAINTS,$$
  such that $-\tensor{1}B$ preserves $\directsum$ and coequalizers and $S\tensor{1}B\cong B$, all naturally in partial $S$-semimodules $B$.
  For each $S$-semimodule $A$, $A\tensor{1}-$ sends inclusions of the form $B\ira\langle B\rangle$ to inclusions of partial $S$-subsemimodules.
\end{prop}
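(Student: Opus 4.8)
The plan is to dispatch uniqueness first, since it is essentially formal and also dictates the definition. Every $\semiring{1}$-semimodule $A$ is a coequalizer in $\COEFFICIENTS$ of a pair $\semiring{1}[R]\rra\semiring{1}[G]$ of maps between free $\semiring{1}$-semimodules (take $\semiring{1}[G]\twoheadrightarrow A$ and present the kernel congruence), and each such free $\semiring{1}$-semimodule is a direct sum of copies of $\semiring{1}$. Hence any functor $\COEFFICIENTS\to\CONSTRAINTS$ preserving $\directsum$ and coequalizers and carrying a natural isomorphism $\semiring{1}\tensor{1}B\cong B$ is forced to send $\semiring{1}[X]$ to $\directsum_{x\in X}B$ (naturally in $B$), and then to send a presented $A$ to the coequalizer in $\CONSTRAINTS$ of the induced pair $\directsum_{R}B\rra\directsum_{G}B$, with its structure maps pinned down by naturality in $B$. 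So the extension, if it exists, is unique up to unique natural isomorphism, and this description is exactly what one must try to use as the definition.

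For existence I would \emph{define} $A\tensor{1}B$ by that coequalizer, using a fixed functorial presentation of $A$ --- for definiteness the tautological one recording the addition and scalar multiplication of $A$ --- and then check the stated properties via Proposition \ref{prop:co.equalizers}. That proposition computes a coequalizer in $\CONSTRAINTS$ as the smallest partial $\semiring{1}$-subsemimodule, through which the cocone factors, of the corresponding coequalizer $M$ formed in $\COEFFICIENTS$ after applying $\langle-\rangle$. Since $\langle-\rangle$ preserves direct sums (it is left adjoint to the forgetful functor) one has $\langle\directsum_{G}B\rangle\cong\directsum_{G}\langle B\rangle\cong\semiring{1}[G]\tensor{1}\langle B\rangle$, and since $-\tensor{1}\langle B\rangle$ has a right adjoint on $\COEFFICIENTS$ by the closed structure it preserves this coequalizer; so $M\cong A\tensor{1}\langle B\rangle$, the ordinary tensor product. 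Thus $A\tensor{1}B$ emerges as a canonical partial $\semiring{1}$-subsemimodule of $A\tensor{1}\langle B\rangle$, and tracing the identifications shows the inclusion is precisely $1_A\tensor{1}\iota_B$ for $\iota_B\colon B\ira\langle B\rangle$ --- this is the final assertion of the proposition. Taking $A=\semiring{1}$ with its one-generator presentation gives $\semiring{1}\tensor{1}B\cong B$; naturality in $B$ is immediate from the stagewise construction; preservation of $\directsum$ in the first variable follows because a coproduct of presentations is a presentation and $\directsum$ commutes with $\directsum$ and with coequalizers in $\CONSTRAINTS$; and preservation of coequalizers follows by the standard ``present the coequalizing pair'' manoeuvre together with the interchange of iterated coequalizers, all coequalizers here being computed via Proposition \ref{prop:co.equalizers}.

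The step I expect to be the main obstacle is well-definedness and functoriality in $A$: because $\CONSTRAINTS$ is not abelian, one cannot simply quote homological algebra to see that the coequalizer above is independent of the chosen presentation and that a morphism $A\to A'$ induces a well-defined morphism of coequalizers. I would resolve this by using the explicit description of coequalizers in Proposition \ref{prop:co.equalizers} to characterize $A\tensor{1}B$ intrinsically --- as the smallest partial $\semiring{1}$-subsemimodule of $A\tensor{1}\langle B\rangle$ containing the image of the canonical map $\directsum_{a\in A}B\to A\tensor{1}\langle B\rangle$ and closed under the additive and scalar relations presenting $A$ --- a description visibly independent of all choices and functorial in both variables. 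An equivalent route is to show that this partial $\semiring{1}$-subsemimodule corepresents the functor sending a partial $\semiring{1}$-semimodule $C$ to the set of partial maps $A\times B\rel C$ that are $\semiring{1}$-linear in $A$ and partial $\semiring{1}$-homomorphisms in $B$; the universal property then makes independence of the presentation and all the preservation properties routine adjunction arguments.
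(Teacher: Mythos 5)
Your overall strategy matches the paper's: uniqueness is forced because every $S$-semimodule is a coequalizer of free ones, existence is obtained by setting $A\tensor{1}B=\directsum_{x\in X}B$ for $A$ free on $X$ and then taking the coequalizer of $A_1\tensor{1}B\rra A_0\tensor{1}B$ for a presentation of $A$, and the last clause falls out of realizing this coequalizer inside $A\tensor{1}\langle B\rangle$ exactly as you describe. The one place you diverge is the step you yourself flag as the main obstacle, and there your primary resolution is weaker than what is needed. Declaring $A\tensor{1}B$ to be the smallest partial $S$-subsemimodule of $A\tensor{1}\langle B\rangle$ containing the image of the tautological map $\directsum_{a\in A}B\ra A\tensor{1}\langle B\rangle$ does give a choice-free object, but it does not by itself show that an \emph{arbitrary} presentation $A_1\rra A_0\ra A$ computes that same subobject --- and you need that stronger statement later, e.g.\ to run the ``present the coequalizing pair'' argument for preservation of coequalizers in the $A$-variable. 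The paper closes precisely this gap with a projectivity argument: it reduces to kernel-pair presentations, lifts one augmentation $A'_0\ra A$ against another $A''_0\ra A$ using projectivity of free $S$-semimodules to get comparison maps $\alpha_0,\alpha_1$, and concludes that the two images in $A\tensor{1}\langle B\rangle$ coincide by symmetry. Your second suggested route (corepresenting partial bilinear maps) would also work, but as written it is only a proposal; either carry it out or insert the projectivity comparison, since without one of them the independence of presentation, and hence functoriality in $A$ and coequalizer preservation, is not actually established.
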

\begin{proof}
  Uniqueness follows because every $S$-semimodule is the coequalizer in $\COEFFICIENTS$ and hence $\CONSTRAINTS$ [Proposition \ref{prop:co.equalizers}] of free $S$-semimodules.

  Let $A$ denote an $S$-semimodule and $B$ denote a partial $S$-semimodule.

  Define $S\tensor{1}B=B$, $(\alpha:S\ra S)\tensor{1}B$ to be the $S$-homomorphism $B\ra B$ sending $b\in B$ to $\alpha(1)\times_Bb$ for each $S$-homomorphism $\alpha:S\ra S$, and $S\tensor{1}\beta$ to be $\beta$ for each partial $S$-homomorphism $\beta$.
  Define $A\tensor{1}B$ to be the partial $S$-semimodule
  $$A\tensor{1}B=\bigoplus_{x\in X}S\tensor{1}B=\bigoplus_{x\in X}B$$
  natural in $A$ freely generated by a set $X$ and $B$.
  It suffices to show that the partial $S$-semimodule $A\tensor{1}B$ defined by the top coequalizer diagram
  \begin{equation*}
    \xymatrix@C=1pc{
        A_1\tensor{1}B
        \ar@<.7ex>[rrrrr]^-{\eta_-\tensor{1}B}
        \ar@<-.7ex>[rrrrr]_-{\eta_+\tensor{1}B}
        \ar[d]_{1_{A_1}\tensor{1}B\ira\langle B\rangle}
      &
      &
      &
      &
      & A_0\tensor{1}B
        \ar@{.>}[r]
        \ar[d]^{1_{A_0}\tensor{1}B\ira\langle B\rangle}
      & A\tensor{1}B
        \ar@{.>}[d]
      \\
        A_1\tensor{1}\langle B\rangle
        \ar@<.7ex>[rrrrr]^-{\eta_-\tensor{1}B}
        \ar@<-.7ex>[rrrrr]_-{\eta_+\tensor{1}B}
      &
      &
      &
      &
      & A_0\tensor{1}\langle B\rangle
        \ar[r]
      & A\tensor{1}\langle B\rangle
    }
  \end{equation*}
  natural in $B$, is independent of a choice of $S$-homomorphisms $\eta_-,\eta_+:A_1\ra A_0$ between freely generated $S$-semimodules $A_0,A_1$ with coequalizer $A$.
  For then $A\tensor{1}B$ is natural in $A$, $-\tensor{1}B$ preserves $\directsum$ and coequalizers by construction, and for $\eta_-,\eta_+:A_1\ra A_0$ a kernel-pair, the vertical arrows are inclusions and hence induce inclusions between coequalizers.

  Consider two such choices $\eta'_-,\eta'_+:A'_1\ra A'_0$ and $\eta''_-,\eta''_+:A''_1\ra A''_0$ of $S$-homomorphisms between freely generated $S$-semimodules with common coequalizer $A$.
  It suffices to show that $\colim\;(\eta'_-\tensor{1}B,\eta'_+\tensor{1}B)=\colim\;(\eta''_-\tensor{1}B,\eta''_+\tensor{1}B)$.
  Hence it suffices to consider the case $\eta'_-,\eta'_+$ and $\eta''_-,\eta''_+$ are kernel-pairs. 
  Consider the diagram
 \begin{equation*}
    \xymatrix@C=1pc{
        A'_1\tensor{1}B
        \ar@<.7ex>[rrr]^-{\eta'_-\tensor{1}B}
        \ar@<-.7ex>[rrr]_-{\eta'_+\tensor{1}B}
        \ar@{.>}[d]|-{\alpha_1\tensor{1}1_B}
      &
      &
      & A'_0\tensor{1}B
        \ar@{.>}[d]|-{\alpha_0\tensor{1}1_B}
        \ar@{.>}[r]
      & A\tensor{1}\langle B\rangle
        \ar@{=}[d]
      \\
        A''_1\tensor{1}B
        \ar@<.7ex>[rrr]^-{\eta''_-\tensor{1}B}
        \ar@<-.7ex>[rrr]_-{\eta''_+\tensor{1}B}
      &
      &
      & A''_0\tensor{1}B
        \ar@{.>}[r]
      & A\tensor{1}\langle B\rangle
    }
  \end{equation*}
  of solid arrows.
  There exist dotted horizontal dotted arrows induced by inclusion $B\ira\langle B\rangle$ and universal arrows $A'_0\ra A$ and $A''_0\ra A$.
  There exists an $S$-homomomorphism $\alpha_0:A'_0\ra A''_0$ making the right square commute by $A'_0$ projective.
  Hence there exists an $S$-homomomorphism $\alpha_1:A'_1\ra A''_1$ making the diagram jointly commutative.
  Hence the image the top horizontal dotted arrow lies in the image of the bottom horizontal dotted arrow.
  By symmetry the images of the horizontal dotted arrows coincide.
\end{proof}

A partial $\semiring{1}$-semimodule $\semimodule{1}$ is \textit{flat} if the functor
$$-\tensor{1}\semimodule{1}:\COEFFICIENTS\ra\CONSTRAINTS$$
preserves equalizers.
This definition of flatness for partial $S$-semimodules extends the definition of flatness for semimodules [Proposition \ref{prop:co.equalizers}].

\begin{lem}
  \label{lem:superflatness}
  For a flat $S$-semimodule $M$, $M\tensor{1}-:\CONSTRAINTS\ra\CONSTRAINTS$ preserves equalizers.
\end{lem}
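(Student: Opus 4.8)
The plan is to combine the characterization of flat $S$-semimodules as filtered colimits of finitely generated free $S$-semimodules (Theorem \cite[2.2]{katsov2004flat}) with the structural formula for $\tensor{1}$ from Proposition \ref{prop:tensors}. Write $M\cong\colim_i\semiring{1}[X_i]$ as a filtered colimit with each $X_i$ a finite set. By Proposition \ref{prop:tensors} the functor $-\tensor{1}B\colon\COEFFICIENTS\ra\CONSTRAINTS$ preserves $\directsum$ and coequalizers, hence preserves all colimits; combined with $S\tensor{1}B\cong B$ and $\semiring{1}[X_i]\cong\directsum_{X_i}\semiring{1}$, this yields a natural isomorphism
$$M\tensor{1}B\;\cong\;\colim_i\;\directsum_{x\in X_i}B.$$
It therefore suffices to prove two facts: that for each finite set $X$ the functor $\directsum_{x\in X}(-)\colon\CONSTRAINTS\ra\CONSTRAINTS$ preserves equalizers, and that filtered colimits in $\CONSTRAINTS$ commute with equalizers. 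Chaining these two with the displayed isomorphism then shows that $M\tensor{1}D\cong\colim_i\directsum_{X_i}D$ is the equalizer of $M\tensor{1}A\rightrightarrows M\tensor{1}C$ for every equalizer $D\ra A\rightrightarrows C$ in $\CONSTRAINTS$, with all identifications natural in the three partial $S$-semimodules.

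I would establish both reductions at the level of underlying partial sets, using the construction of (co)limits in $\CONSTRAINTS$ from the proof of Proposition \ref{prop:co.equalizers}: an equalizer in $\CONSTRAINTS$ is the equalizer of underlying partial functions equipped with the terminal partial-$S$-semimodule structure, and a filtered colimit is built from the corresponding colimit of underlying partial sets. Since the category $\PARTIALSETS$ of sets and partial functions is equivalent to the category of pointed sets, finite products and filtered colimits in $\PARTIALSETS$ commute with finite limits; transporting these facts through the structure-carrying steps gives the two required preservation statements. Flatness of $M$ enters precisely in guaranteeing that the indexing sets $X_i$ can be chosen finite, so that only finite direct sums occur.

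The delicate point — and where I expect the real work to lie — is the interaction of equalizers with the \emph{partial} structure rather than with an abelian one. An equalizer of partial functions retains all points on which both parallel arrows are simultaneously undefined, so transporting the terminal partial-semimodule structure through a finite direct sum, and then through a filtered colimit, requires careful bookkeeping of the domains of definition of every partial homomorphism involved (using Proposition \ref{prop:partial.homomorphisms} to factor each parallel arrow through its domain of definition, a partial $S$-subsemimodule). Keeping the direct sums finite is exactly what makes this bookkeeping manageable — the argument has no analogue for infinite direct sums, consistently with the failure of flatness for general $S$-semimodules.
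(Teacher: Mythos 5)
Your argument is correct and follows the paper's proof essentially verbatim: reduce to the free case via direct sums commuting with equalizers in $\CONSTRAINTS$, then handle general flat $M$ as a filtered colimit of finitely generated free $S$-semimodules using Katsov's characterization together with Proposition \ref{prop:co.equalizers}. The only (harmless) divergence is your closing remark that the argument has no analogue for infinite direct sums --- the paper asserts that $\bigoplus$ commutes with equalizers in $\CONSTRAINTS$ without a finiteness hypothesis (and uses this to treat arbitrary free $M$ directly), but your route never needs more than finite direct sums plus filtered colimits, so nothing is lost.
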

\begin{proof}
  The operator $\bigoplus$ commutes with equalizers in $\CONSTRAINTS$ and hence the case $M$ free follows.
  The general case follows by $M$ a filtered colimit in $\COEFFICIENTS$, and hence $\CONSTRAINTS$ [Proposition \ref{prop:co.equalizers}], of finitely generated free $S$-semimodules [Theorem \ref{thm:flatness}].  
\end{proof}

\subsection{Digraphs}\label{subsec:digraphs}
A \textit{digraph} will mean a directed graph allowing for self-loops and missing vertices (like the first graph in Example \ref{eg:bifurcations}).
Formally, a \textit{digraph} $X$ means a set $\vertices{\digraph{1}}$ of \textit{vertices}, a set $\edges{\digraph{1}}$ of \textit{edges}, and \textit{source} and \textit{target} partial(ly defined) functions 
$$\delminus{.4},\delplus{.4}:\edges{\digraph{1}}\rel\vertices{\digraph{1}}.$$

This paper implicitly interprets a digraph $X$, with vertex and edge sets written as $V_X$ and $E_X$ as a poset $(X,\facerelation)$ and hence an Alexandroff space in the following sense.
The disjoint union $X=V_X\coprod E_X$ is partially ordered so that $v\facerelation e$ if $e\in\edges{X}$ and $v=\delminus{.4} e$ or $v=\delplus{.4}e$.
A subset $U\subset X$ is sometimes regarded as a digraph with source and target functions suitable restrictions and corestrictions of the original source and target partial functions to $U$.
A subset $C\subset X$ is \textit{open} if $v\facerelation e$ and $v\in C$ implies $e\in C$.
A subset $C\subset X$ is \textit{closed} if $v\facerelation e$ and $e\in C$ implies $v\in C$.
The \textit{closure} $\langle C\rangle$ of subset $C\subset X$ is the set
$$\langle C\rangle=C\cup\delplus{.4}(C\cap E_X)\cup\delminus{.4}(C\cap E_X).$$

A subset $C$ of a digraph $X$ is regarded itself as a digraph such that $E_C=E_X\cap C$, $V_C=V_C\cap C$, and the source and target partial functions of $C$ are restrictions and corestrictions of source and target partial functions of $X$.
A digraph is \textit{complete} if the above source and target partial functions are functions.
Like manifolds, digraphs have boundaries.  
Unlike manifolds, complete digraphs have both positive and negative boundaries defined as follows.  
For each complete digraph $X$, let
$$\delminus{.4}X=\delminus{.4}E_X-\delplus{.4}E_X,\quad \delplus{.4}X=\delplus{.4}E_X-\delminus{.4}E_X.$$

The \textit{in-degree} and \textit{out-degree} of a vertex $v\in\vertices{X}$ in a digraph $X$ are the cardinalities of the respective sets $\delminus{.4} ^{-1}(v),\delplus{.4}^{-1}(v)$.
A digraph is \textit{finite} if $V_X,E_X$ are finite, \textit{locally finite} if each vertex has finite in-degree and finite out-degree, and \textit{compact} if $\delminus{.4} ,\delplus{.4}:E_X\rel V_X$ are functions and $X$ is finite.
A \textit{directed loop} in a digraph $X$ is a compact subset of $X$ whose vertices each have in-degree and out-degree $1$; a directed loop is \textit{simple} if it contains no directed loops as proper subsets.
A digraph is \textit{directed acyclic} if it contains no directed loops.

The \textit{subdivision} $\sd X$ of a digraph $X$ is the digraph defined by
$$\vertices{\sd X}=X,\quad\edges{\sd X}=\{e_-\;|\;e\in\edges{X}\}\cup\{e_+\;|\;e\in\edges{X}\}$$
and $\delminus{.4}e_-=\delminus{.4}e$, $\delplus{.4}e_+=\delplus{.4}e$, and $\delminus{.4}e_+=\delplus{.4}e_-=e$ for each $e\in\edges{X}$.

\begin{eg}[Subdivisions]
  A digraph $X$ (left) and its subdivision $\sd X$ (right):
  \label{fig:freeness}
  \begin{center}
  \begin{tikzpicture}[->,>=stealth',shorten >=1pt,auto,node distance=1.5cm,
                    thick,main node/.style={circle,fill=blue!20,draw,font=\sffamily\small}]
  \node[main node] (2) {$v_1$};
  \node[main node] (4) [right of=2] {$v_2$};
  \path[every node/.style={font=\small}]
    (2) edge node [sloped, pos=.5] {$e$} (4);
  \coordinate [right of=4] (46);
  \node[main node] (6) [right of =46] {$v_1$};
  \node[main node] (68) [right of =6] {$e$};
  \node[main node] (8) [right of =68] {$v_2$};
  \path[every node/.style={font=\small}]
    (6) edge node [sloped, pos=.5] {$e_-$} (68)
    (68) edge node [sloped, pos=.5] {$e_+$} (8);
  \end{tikzpicture} 
  \end{center} 
\end{eg}

A \textit{digraph} $(X;\omega)$ \textit{weighted in a commutative monoid $M$} will mean a digraph $X$ equipped with collection $\{w_e\}_{e\in E_X}$ of additive ideals $\omega_e$ in $M$ for all $e\in E_X$.
A \textit{weighted digraph} $(X;\omega)$ will simply refer to a digraph $(X;\omega)$ weighted in a given commutative monoid.

\subsection{Sheaves}\label{subsec:sheaves}
Edge weights, which are globally comparable as elements in a common set, generalize to \textit{sheaves}, a choice of locally varying sets over the vertices and edges, as follows.
Fix digraph $\digraph{1}$ and category $\GENERIC$.
Let $\Sh_{\digraph{1};\GENERIC}$ denote the category of functors
$$X\ra\GENERIC$$
from the poset $\digraph{1}$ and natural tranformations between them.
The \textit{restriction maps} of a $\Sh_{X;\GENERIC}$-object $\sheaf{1}$ \textit{between cells of $X$} are all $\GENERIC$-morphisms of the form $\sheaf{1}(v\facerelation e):\sheaf{1}(v)\ra\sheaf{1}(e)$ for all $v\facerelation e$ in $X$.
For each $C\subset X$, let $(C\subset X)^*$ denote the \textit{pullback} functor
$$(C\subset X)^*:\Sh_{X;\GENERIC}\ra\Sh_{C;\GENERIC}$$
defined on objects as restrictions.
Let $\sd$ denote the \textit{subdivision} functor
$$\sd:\Sh_{X;\GENERIC}\ra\Sh_{\sd X;\GENERIC}$$
naturally defined on objects $\sheaf{1}$ by the rules
\begin{align*}
  (\sd\sheaf{1})(c) &= \sheaf{1}(c),\;c\in X\subset V_{\sd X}\\
  (\sd\sheaf{1})(e_{\pm}) &= \sheaf{1}(e),\;e\in\edges{X}\subset V_{\sd X}\\
  (\sd\sheaf{1})(v\facerelation e_{\pm}) &= \sheaf{1}(v\facerelation e)\\
  (\sd\sheaf{1})(e\facerelation e_{\pm}) &= 1_{\sheaf{1}(e)}.
\end{align*}

An \textit{$S$-sheaf on $\digraph{1}$} will mean an $\Sh_{X;\COEFFICIENTS}$-object.
More generally, \textit{partial $S$-sheaf on $\digraph{1}$} will mean an $\Sh_{X;\CONSTRAINTS}$-object.
The \textit{constant sheaf at $\semimodule{1}$}, written $\constantsheaf{\semimodule{1}}$, is the $S$-sheaf on a digraph constant on a partial $\semiring{1}$-semimodule $\semimodule{1}$ as a functor.
Let
$$(C\subset X)_*:\Sh_{C;\CONSTRAINTS}\ra\Sh_{X;\CONSTRAINTS}$$
denote the right adjoint to $(C\subset X)^*$, the \textit{pushforward} functor naturally defined by
\begin{align*}
  (C\subset X)_*\sheaf{1}(c)&=\sheaf{1}(c), && c\in C\\
  (C\subset X)_*\sheaf{1}(c)&=0, && c\in X-C\\
  (C\subset X)_*\sheaf{1}(v\facerelation e)&=\sheaf{1}(v\facerelation e), && v\facerelation e,\;e,v\in C.
\end{align*}

A partial $S$-sheaf $\mathcal{A}$ on a digraph $X$ is a \textit{partial $S$-subsheaf} of an $S$-sheaf $\mathcal{B}$ on $X$ such that $\mathcal{A}(c)$ is a partial $S$-subsemimodule of $\mathcal{B}(c)$ for each $c\in C$ and objectwise inclusion defines a natural transformation $\mathcal{A}\ra\mathcal{B}$.

\begin{eg}[\'{E}tale Space]
  \label{eg:etale}
  Sheaves can be visualized as data over a graph as follows.
  \vspace{.1in}
  \begin{center}
  \begin{tikzpicture}[auto,node distance=1.5cm,
                    thick,main node/.style={circle,fill=yellow!20,draw,font=\small}]

  \node[main node] (1) {$v_1$};
  \coordinate [right of =1] (12);
  \node[main node] (2) [right of=12] {$v_2$};

  \node[main node/.style={draw,font=\small}] (3) [above of=1] {$\lambda_{11}$};
  \node[main node/.style={draw,font=\small}] (4) [above of=3] {$\lambda_{12}$};
  \draw[dashed] (3) -- (4);

  \node[main node/.style={draw,font=\small}] (5) [above of=12] {$\lambda_{1}$};
  \node[main node/.style={draw,font=\small}] (6) [above of=5] {$\lambda_2$};
  \draw[dashed] (5) -- (6);

  \node[main node/.style={draw,font=\small}] (7) [above of=2] {$\lambda_{21}$};
  \node[main node/.style={draw,font=\small}] (8) [above of=7] {$\lambda_{22}$};
  \draw[dashed] (7) -- (8);

  \path[every node/.style={font=\small}]
    (1) edge node [sloped] {$e$} (2);
  \path[->]
    (3) edge node [sloped] {} (5)
    (4) edge node [sloped] {} (6)
    (7) edge node [sloped] {} (6)
    (8) edge node [sloped] {} (6);
  \end{tikzpicture} 
  \vspace{.1in}
  \end{center} 
  Let $X$ be the bottom graph $\{v_1,e,v_2\}$.
  The dotted lines over each cell $c\in X$ represent the Hasse diagrams of semilattices $\sheaf{1}(c)$, semimodules over the Boolean semiring $\Lambda$.
  The arrows reflect the $S$-homomorphisms of the form $\sheaf{1}(v\facerelation e)$ for vertices $v$ and edges $e$.
\end{eg}

Consider a digraph $(X;\omega)$ weighted in an $S$-semimodule $M$.  
This paper henceforth identifies the edge weights $\omega$ with the partial $S$-subsheaf on $X$ of $\constantsheaf{\semimodule{1}}$ defined by the following rules, where $\leqslant_M$ denotes the natural preorder on $M$.
\begin{align*}
  \omega(e) &= \{x\in M\;|\;x\leqslant_M\omega_e\} && e\in E_X\\
  \omega(v) &= \semimodule{1} && v\in V_X,
\end{align*}

The action of $\COEFFICIENTS$ on $\CONSTRAINTS$ defines an objectwise action
$$\tensor{1}:\Sh_{X;\COEFFICIENTS}\times\Sh_{X;\CONSTRAINTS}\ra\Sh_{X;\CONSTRAINTS}.$$

Likewise, the operation $\directsum$ on $\CONSTRAINTS$ defines an objectwise \textit{direct sum} operation on $\Sh_{X;\CONSTRAINTS}$.

A partial $S$-sheaf $\sheaf{1}$ is \textit{flat} if the following functor preserves equalizers:
$$-\tensor{1}\sheaf{1}:\Sh_{X;\COEFFICIENTS}\ra\Sh_{X;\CONSTRAINTS}$$
Equivalently, $\sheaf{1}$ is flat if it is objectwise flat.
The edge weights of a weighted digraph are flat if they take values in a flat $S$-semimodule.
A partial $S$-sheaf $\sheaf{1}$ is \textit{naturally inf-semilattice ordered} if it is objectwise naturally inf-semilattice ordered and the restriction maps between cells of $\sheaf{1}$ preserve greatest lower bounds, with respect to natural preorders, of finite subsets.

\addtocontents{toc}{\protect\setcounter{tocdepth}{2}}
\section{(Co)homology}\label{sec:H}
The constructions $\dicohomology{0},\dihomology{0}$ are defined as natural, dual categorical constructions on sheaves.  
In an abuse of notation, $\dicohomology{\grading}(-),\dihomology{\grading}(-)$ will denote $\dicohomology{\grading}(-;\sheaf{1}),\dihomology{\grading}(-;\sheaf{1})$ in this section whenever the coefficient sheaf $\sheaf{1}$ is understood.

\subsection{${\bf \dicohomology{\grading}}$}\label{subsec:HH}
Directed cohomology equalizes and coequalizes cochain diagrams.

\begin{defn}
  \label{defn:compact.HH0}
  Let $\ncdicohomology{\grading}(X;\sheaf{1})$ be defined the equalizer and coequalizer diagrams
  \begin{equation}
    \label{eqn:compact.HH0}
  \xymatrix@C=2pc{
      \ncdicohomology{0}(X;\sheaf{1})
        \ar@{.>}[r]
    & \directsum_{v\in V_X}\;\sheaf{1}(v)
        \ar@<.7ex>[rrr]^-{\directsum_{\delminus{.4} e=v}\sheaf{1}(v\facerelation e)}
        \ar@<-.7ex>[rrr]_-{\directsum_{\delplus{.4}e=v}\sheaf{1}(v\facerelation e)}
    &
    &
    & \directsum_{e\in E_X}\;\sheaf{1}(e)
      \ar@{.>}[r]
    & \ncdicohomology{1}(X;\sheaf{1})
  }
  \end{equation}
  natural in partial $S$-sheaves $\sheaf{1}$ on compact digraphs $X$.
\end{defn}

Inclusions $A\subset B\subset X$ of digraphs induce inclusions between direct sums of stalks and hence induce partial $S$-homomorphisms
$$\ncdicohomology{0}(A;(A\subset X)^*\sheaf{1})\ra\ncdicohomology{0}(B;(B\subset X)^*\sheaf{1})$$
natural in partial $S$-sheaves $\sheaf{1}$ on digraphs $X$.
Thus a compactly supported variant of directed cohomology can be defined as follows.

\begin{defn}
  \label{defn:compact.HH0}
  Let $\dicohomology{\grading}(X;\sheaf{1})$ denote the $S$-semimodule
  $$\dicohomology{\grading}(X;\sheaf{1})=\colim_{K\subset X}\ncdicohomology{\grading}(K;(K\subset X)^*\sheaf{1}),$$
  where $\ncdihomology{\grading}(K;\sheaf{1})$ is regarded as a covariant functor in $K$ and the colimit is taken over all compact subdigraphs $K\subset X$, natural in partial $S$-sheaves $\sheaf{1}$ on digraphs $X$.
\end{defn}

This paper abuses notation and henceforth denotes $\dicohomology{0}(C;(C\subset X)^*\sheaf{1})$ by $\dicohomology{0}(C;\sheaf{1})$ for $\sheaf{1}$ an $S$-sheaf on a digraph $X$.
Thus $\dicohomology{0}(C;\sheaf{1})$ is both covariant and contravariant in subsets $C$ of the digraph $X$ on which an $S$-sheaf $\sheaf{1}$ is defined.
For an inclusion $A\subset B$ of digraph, $\dicohomology{0}(A\subset B;\sheaf{1})$ will denote induced partial $S$-homomorphism
$$\dicohomology{0}(A\subset B;\sheaf{1}):\dicohomology{0}(B;\sheaf{1})\ra\dicohomology{0}(A;\sheaf{1}).$$

For $S$ a ring, $\CONSTRAINTS$ is the category of $S$-modules and the following proposition hence follows because the difference in the parallel arrows in Definition \ref{defn:compact.HH0} is the natural boundary operator on the Cech cochain complex for Abelian sheaf (co)homology.

\begin{prop}
  \label{prop:classical.cohomologie}
  Over a ring, $\dicohomology{\grading}(X;-)$ is compactly supported Abelian sheaf cohomology.
\end{prop}
\begin{proof}
  Take $S$ to be a ring.
  It suffices to consider the case $X$ compact [Proposition \ref{prop:co.equalizers}].
  In that case, the difference between the parallel arrows in (\ref{eqn:compact.HH0}) define the $0$th coboundary operator on the Cech complex associated to a sheaf $\sheaf{1}$ of $S$-modules, whose (co)kernel coincides with the (co)equalizer of (\ref{eqn:compact.HH0}) by $\CONSTRAINTS$ an Abelian category [Corollary \ref{cor:ab.categories}].
\end{proof}

\begin{prop}
  \label{prop:sd-cohomologie}
  There exist isomorphisms
  \begin{equation*}
    \label{eqn:sd-cohomologie}
    \dicohomology{\grading}(\sd X;\sd\sheaf{1})\cong\dicohomology{\grading}(X;\sheaf{1})
  \end{equation*}
  natural in digraphs $X$ and $S$-sheaves $\sheaf{1}$.  
\end{prop}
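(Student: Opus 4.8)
The plan is to reduce the statement to the noncompactly supported functors $\ncdicohomology{\grading}$ on compact digraphs and then to perform an explicit combinatorial identification. First I would note that $\sd$ sends compact subdigraphs of $X$ to compact subdigraphs of $\sd X$, and that $\sd$ commutes with restriction, so that $(\sd K\subset\sd X)^{*}\sd\sheaf{1}\cong\sd\bigl((K\subset X)^{*}\sheaf{1}\bigr)$ naturally in $\sheaf{1}$. Together with a cofinality check — that, up to colimit, the subdivisions $\sd K$ of compact subdigraphs $K\subset X$ exhaust the compact subdigraphs of $\sd X$ — this reduces the proposition to producing isomorphisms $\ncdicohomology{\grading}(\sd X;\sd\sheaf{1})\cong\ncdicohomology{\grading}(X;\sheaf{1})$, natural in compact digraphs $X$ and in $\sheaf{1}$ and compatible with the colimit transition maps. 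I henceforth treat $X$ compact.

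In the compact case I would use the decompositions $V_{\sd X}=V_{X}\sqcup E_{X}$ and $E_{\sd X}=\{e_{-}\}_{e\in E_{X}}\sqcup\{e_{+}\}_{e\in E_{X}}$, so that the two middle terms of the diagram defining $\ncdicohomology{\grading}(\sd X;\sd\sheaf{1})$ become $\bigl(\directsum_{v}\sheaf{1}(v)\bigr)\oplus\bigl(\directsum_{e}\sheaf{1}(e)\bigr)$ and $\bigl(\directsum_{e}\sheaf{1}(e)\bigr)\oplus\bigl(\directsum_{e}\sheaf{1}(e)\bigr)$. Reading off the parallel arrows from the defining restriction maps $\sd\sheaf{1}(\delminus{.4}e\facerelation e_{-})=\sheaf{1}(\delminus{.4}e\facerelation e)$, $\sd\sheaf{1}(\delplus{.4}e\facerelation e_{+})=\sheaf{1}(\delplus{.4}e\facerelation e)$, $\sd\sheaf{1}(e\facerelation e_{\pm})=1_{\sheaf{1}(e)}$: writing an element as a pair $(x,y)$ with $x=(x_{v})$ over $V_{X}$ and $y=(y_{e})$ over $E_{X}$, the arrows send $(x,y)$ on the $e_{-}$-summand to $\sheaf{1}(\delminus{.4}e\facerelation e)(x_{\delminus{.4}e})$ and to $y_{e}$, and on the $e_{+}$-summand to $y_{e}$ and to $\sheaf{1}(\delplus{.4}e\facerelation e)(x_{\delplus{.4}e})$. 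Hence the equalizer consists of those $(x,y)$ with $y_{e}=\sheaf{1}(\delminus{.4}e\facerelation e)(x_{\delminus{.4}e})=\sheaf{1}(\delplus{.4}e\facerelation e)(x_{\delplus{.4}e})$ for every $e$; projection onto the $x$-coordinates is then a partial $S$-homomorphism with two-sided inverse onto the equalizer of the $X$-diagram, giving $\ncdicohomology{0}(\sd X;\sd\sheaf{1})\cong\ncdicohomology{0}(X;\sheaf{1})$. Dually, in the coequalizer the relations contributed by generators of the $\directsum_{e}\sheaf{1}(e)$-summand identify, element by element, the $e_{-}$- and $e_{+}$-copies of each $\sheaf{1}(e)$, after which the relations contributed by generators of the $\directsum_{v}\sheaf{1}(v)$-summand become exactly the relations defining $\ncdicohomology{1}(X;\sheaf{1})$; so the evident map induces $\ncdicohomology{1}(\sd X;\sd\sheaf{1})\cong\ncdicohomology{1}(X;\sheaf{1})$. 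These constructions are visibly natural in $X$ and in $\sheaf{1}$, and compatibility with the colimit transition maps (extension of cochains by zero) is immediate from the formulas.

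I expect the coequalizer step to be the main obstacle: coequalizers in $\CONSTRAINTS$ are not computed on underlying partial sets [Proposition \ref{prop:co.equalizers}], so the ``identify and descend'' argument for $\ncdicohomology{1}$ should be run through the universal property, by matching cocones out of the two candidate middle terms, or by applying $\langle-\rangle$ to reduce to the coequalizer computation in $\COEFFICIENTS$ and then checking that the prescribed smallest partial $S$-subsemimodule of Proposition \ref{prop:co.equalizers} is carried across by the comparison. A secondary point needing care is the cofinality claim used in the reduction; here the fact that the midpoint restriction maps $\sd\sheaf{1}(e\facerelation e_{\pm})$ are identities is what makes the subdivided compact subdigraphs suffice, since it lets one contract pendant half-edges of compact subdigraphs of $\sd X$ without changing $\ncdicohomology{\grading}$.
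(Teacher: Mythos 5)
Your proposal is correct and follows essentially the same route as the paper's proof: both reduce to the compact case, decompose the subdivided cochain diagram via $V_{\sd X}=V_X\sqcup E_X$ and $E_{\sd X}=\{e_-\}\sqcup\{e_+\}$, and show that projection on $0$-cochains and the fold map on $1$-cochains induce isomorphisms of the (co)equalizers. You are somewhat more explicit than the paper about the cofinality step and about why the coequalizer identification is legitimate (the paper handles the latter with the same congruence-in-stages argument, which is valid here since the coefficients are genuine $S$-sheaves so the coequalizer is computed in $\COEFFICIENTS$).
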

\begin{proof}
  Consider the natural jointly commutative diagram
  \begin{equation*}
  \xymatrix@C=2pc{
      \directsum_{x\in X}\;\sheaf{1}(x)
        \ar@<.7ex>[rrr]^-{\directsum_{\delminus{.4}e_\pm=x}\sheaf{1}(x\facerelation e_\pm)}
        \ar@<-.7ex>[rrr]_-{\directsum_{\delplus{.4}e_\pm=x}\sheaf{1}(x\facerelation e_\pm)}
        \ar[d]
    &
    &
    & \directsum_{e\in E_{X}}\;(\sd\sheaf{1})(e_-)\oplus(\sd\sheaf{1})(e_+),
        \ar[d]^{\sum_{e}(\sd\sheaf{1})(e_-)\cong\sheaf{1}(e))+(\sd\sheaf{1})(e_+)\sheaf{1}(e))}
    \\
      \directsum_{v\in V_X}\;\sheaf{1}(v)
        \ar@<.7ex>[rrr]^-{\directsum_{\delminus{.4} e=v}\sheaf{1}(v\facerelation e)}
        \ar@<-.7ex>[rrr]_-{\directsum_{\delplus{.4}e=v}\sheaf{1}(v\facerelation e)}
    &
    &
    & \directsum_{e\in E_X}\;\sheaf{1}(e)
  }
  \end{equation*}
  with the left vertical arrow projection.
  It suffices to show that the arrows $\dicohomology{\grading}(\sd X;\sd\sheaf{1})\ra\dicohomology{\grading}(X;\sheaf{1})$ between the (co)equalizers of the top and bottom rows of the diagram induced by the vertical arrows are isomorphisms.

  Let $\pi_x$ denote projection onto the $x$-indexed term.

  Every element $\sigma$ in the equalizer of the top row satisfies $\pi_e\sigma=\sheaf{1}(\delminus{.4}e\facerelation e)(\pi_{\delminus{.4}})$ for each $e\in E_X$ and hence the $\pi_e\sigma$'s for all $e\in E_X$ are determined by the $\pi_v\sigma$'s for all $v\in V_X$.
  Hence the induced arrow $\dicohomology{0}(\sd X;\sd\sheaf{1})\ra\dicohomology{0}(X;\sheaf{1})$ is injective.
  
  For each element $\sigma$ in the equalizer of the bottom row, $\sigma+\sum_{e\in E_X}\sheaf{1}(\delminus{.4}e\facerelation e)(\pi_{\delminus{.4}e}\sigma)$ lies in the equalizer of the top row and is the preimage of the left vertical arrow.
  Hence the induced arrow $\dicohomology{0}(\sd X;\sd\sheaf{1})\ra\dicohomology{0}(X;\sheaf{1})$ is surjective.

  The right vertical arrow is surjective. 
  Hence the induced arrow $\dicohomology{1}(\sd X;\sd\sheaf{1})\ra\dicohomology{1}(X;\sheaf{1})$ is surjective.

  Consider distinct elements $\sigma_1,\sigma_1$ in the top right term whose images $\bar\sigma_1,\bar\sigma_2$ under the right vertical map represent the same element in $\dicohomology{1}(X;\sheaf{1})$.
  It suffices to show $\sigma_1,\sigma_2$ represent the same element in $\dicohomology{1}(\sd X;\sd\sheaf{1})$.
  The quotient of the bottom right term by the smallest congruence $\equiv$ satisfying relations of the form $\sheaf{1}(v\facerelation e_1)(\sigma)\equiv\sheaf{1}(v\facerelation e_2)(\sigma)$ is the coequalizer of the bottom row.
  Hence it suffices to consider the case there exist $v\in V_X$, $e_1,e_2\in E_X$, and $\sigma\in\sheaf{1}(v)$ with $\sheaf{1}(v\facerelation e_i)(\sigma)=\bar\sigma_i$ for $i=1,2$; preimages of such $\bar\sigma_1,\bar\sigma_2$ under the right vertical arrow generate the top right term.
  Hence without loss of generality it suffices to consider the case $\sigma_1\in(\sd\sheaf{1})((e_1)_+)$ and $\sigma_2\in(\sd\sheaf{1})((e_2)_-)$ without loss of generality.
  In that case $(\sd\sheaf{1})(v\facerelation(e_1)_+)(\sigma)=\sigma_1$ and $(\sd\sheaf{1})(v\facerelation(e_2)_-)(\sigma)=\sigma_2$.
\end{proof}

The \textit{c-sections} of a partial $S$-sheaf are the elements in $\dicohomology{0}(U;\sheaf{1})$ for each open $U\subset X$.
Thus a \textit{c-sectionwise surjection} is a natural transformation $\epsilon$ of partial $S$-sheaves on a digraph $X$ such that $\dicohomology{0}(U;\epsilon)$ is a surjection for each open $U\subset X$.
Similarly, a \textit{c-sectionwise coequalizer diagram} is a diagram $\sheaf{1}_1\rra\sheaf{1}_0\ra\sheaf{1}$ of partial $S$-sheaves on a digraph $X$ such that $\dicohomology{0}(U;\epsilon)$ is a surjection for each open $U\subset X$

\begin{lem}
  \label{lem:cogodement}
  For each $S$-sheaf $\sheaf{1}$ on a digraph $X$, the universal natural transformation
  \begin{equation*}
    \epsilon:\directsum_{C\subset X}(C\subset X)_*\constantsheaf{\dicohomology{0}(C;\sheaf{1})}\ra\sheaf{1}
  \end{equation*}
  is a sectionwise surjection.
\end{lem}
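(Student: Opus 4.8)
The plan is to prove the statement in its operative form: for every open $U\subset X$ the map $\dicohomology{0}(U;\epsilon)$ is surjective. I would get surjectivity from a single ``diagonal'' summand of the domain of $\epsilon$, the one indexed by $C=U$ itself, by showing $\dicohomology{0}(U;\epsilon)$ is in fact a split epimorphism.

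First I would describe that summand and the associated component of $\epsilon$ over $U$. From the explicit formula for $(C\subset X)_*$ one has $(U\subset X)^*\bigl((U\subset X)_*\constantsheaf{\dicohomology{0}(U;\sheaf{1})}\bigr)=\constantsheaf{\dicohomology{0}(U;\sheaf{1})}$, and the restriction to $U$ of the $C=U$ component $\eta_U$ of $\epsilon$ is the ``evaluation'' natural transformation $c\colon\constantsheaf{\dicohomology{0}(U;\sheaf{1})}\ra(U\subset X)^*\sheaf{1}$ whose value at a cell $x$ of $U$ is the canonical map $\mathrm{ev}_x\colon\dicohomology{0}(U;\sheaf{1})\ra\sheaf{1}(x)$: at a vertex $v$, projection onto the $v$-th factor along $\ncdicohomology{0}(K;\sheaf{1})\hookrightarrow\directsum_{w\in V_K}\sheaf{1}(w)$ for compact $K\subset U$ containing $v$; at an edge $e$, the composite $\sheaf{1}(\delminus{.4}e\facerelation e)\circ\mathrm{ev}_{\delminus{.4}e}$. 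That the $\mathrm{ev}_x$ assemble into a natural transformation is exactly the equalizer condition of Definition \ref{defn:compact.HH0}. Since $\eta_U$ is the inclusion of the $C=U$ summand followed by $\epsilon$, it suffices to show that $\dicohomology{0}(U;\eta_U)$ --- which is $\dicohomology{0}(U;c)$ under the paper's notational conventions --- is surjective.

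For that I would exhibit a section $u$ of $\dicohomology{0}(U;c)$, namely $\tau\mapsto$ the c-section of $\constantsheaf{\dicohomology{0}(U;\sheaf{1})}$ that is ``constantly $\tau$'': on each compact $K\subset U$ the constant function $V_K\to\dicohomology{0}(U;\sheaf{1})$ with value $\tau$ satisfies the equalizer condition trivially, and these represent a single class in the colimit defining $\dicohomology{0}(U;\constantsheaf{\dicohomology{0}(U;\sheaf{1})})$. Then $\dicohomology{0}(U;c)(u(\tau))$ is represented on each compact $K\subset U$ by the family $\bigl(\mathrm{ev}_v(\tau)\bigr)_{v\in V_K}$, and the point --- essentially a tautology --- that closes the argument is that a c-section of $\sheaf{1}$ over $U$ is, by construction of $\dicohomology{0}$, a class of families in $\directsum_{v\in V_K}\sheaf{1}(v)$ and is therefore equal to the family of its own vertex-evaluations; hence $\bigl(\mathrm{ev}_v(\tau)\bigr)_{v\in V_K}$ represents $\tau$, i.e.\ $\dicohomology{0}(U;c)\circ u=\mathrm{id}$. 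So $\dicohomology{0}(U;\epsilon)$ is surjective for each open $U$. The step I expect to need the most care is the first one: pinning down that the $C=U$ component of the ``universal'' map $\epsilon$ really is the evaluation map $c$, and keeping the two filtered colimits over compact subdigraphs --- the one computing $\dicohomology{0}(U;\sheaf{1})$ and the one computing $\dicohomology{0}(U;\constantsheaf{\dicohomology{0}(U;\sheaf{1})})$ --- straight; once that is in place the splitting $u$ and the ``a c-section is the family of its own vertex-values'' tautology finish the proof with no real calculation.
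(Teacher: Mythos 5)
Your proof is correct and is essentially the paper's own argument: the paper's one-sentence proof says precisely that the restriction of $\dicohomology{0}(U;\epsilon)$ to the $C=U$-indexed summand is the identity, which is the split-epimorphism statement you establish via the constant-section map $u$ followed by the vertex-evaluation tautology. The only quibble is your claim that the constant families at $\tau$ on all compact $K\subset U$ represent a single class in $\dicohomology{0}(U;\constantsheaf{\dicohomology{0}(U;\sheaf{1})})$ --- the transition maps of that colimit are extensions by zero, so they generally do not --- but surjectivity only requires taking the constant family on a $K$ carrying a representative of the given $\tau$, so nothing is lost.
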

\begin{proof}
  For each $B\subset X$, the partial $S$-homomorphism
  $$\dicohomology{0}(B;\epsilon):\directsum_{B\subset C}\dicohomology{0}(C;\sheaf{1})\ra\dicohomology{0}(B;\sheaf{1})$$
  is surjective because its restriction to the $B$-indexed summand is the identity.
\end{proof}

\begin{lem}
  \label{lem:HH0-universal.coefficients}
  There exists a partial $S$-homomorphism
  \begin{equation}
    \label{eqn:HH0-universal.coefficients}
    \dicohomology{\grading}(X;\constantsheaf{\semimodule{1}}\tensor{1}\sheaf{1})\ra\semimodule{1}\tensor{1}\dicohomology{\grading}(X;\sheaf{1}),
  \end{equation}
  natural in partial $S$-sheaves $\sheaf{1}$ on digraphs $X$ and $S$-semimodules $M$, that is an isomorphism for the case $\dicohomology{\grading}=\dicohomology{0}$ and $M$ flat or the case $\dicohomology{\grading}=\dicohomology{1}$.
\end{lem}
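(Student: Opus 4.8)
The plan is to reduce the statement to compact digraphs and to exploit that the endofunctor $\semimodule{1}\tensor{1}-\colon\CONSTRAINTS\ra\CONSTRAINTS$ preserves all colimits. The latter holds because $\semimodule{1}\tensor{1}B=\directsum_{i\in I}B$ when $\semimodule{1}$ is free on a set $I$, and in general, by the construction in Proposition \ref{prop:tensors}, $\semimodule{1}\tensor{1}-$ is a pointwise coequalizer of such functors, hence colimit-preserving since colimits commute with colimits. I would also record that $\constantsheaf{\semimodule{1}}\tensor{1}\sheaf{1}$ is the partial $S$-sheaf with stalks $\semimodule{1}\tensor{1}\sheaf{1}(c)$ and restriction maps $\semimodule{1}\tensor{1}\sheaf{1}(v\facerelation e)$; consequently, for a compact subdigraph $K\subset X$, the two parallel maps of (\ref{eqn:compact.HH0}) computing $\ncdicohomology{\grading}(K;\constantsheaf{\semimodule{1}}\tensor{1}\sheaf{1})$ are obtained by applying $\semimodule{1}\tensor{1}-$ to the two maps $\directsum_{v\in\vertices{K}}\sheaf{1}(v)\rra\directsum_{e\in\edges{K}}\sheaf{1}(e)$ computing $\ncdicohomology{\grading}(K;\sheaf{1})$, using that $\semimodule{1}\tensor{1}-$ commutes with those $\directsum$'s.

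For the coequalizer functor $\ncdicohomology{1}$ the argument closes at once: since $\semimodule{1}\tensor{1}-$ preserves coequalizers, its universal property yields a canonical natural isomorphism $\ncdicohomology{1}(K;\constantsheaf{\semimodule{1}}\tensor{1}\sheaf{1})\ra\semimodule{1}\tensor{1}\ncdicohomology{1}(K;\sheaf{1})$ for every $\semimodule{1}$. For the equalizer functor $\ncdicohomology{0}$ the universal property of the equalizer instead produces a canonical natural comparison map $\semimodule{1}\tensor{1}\ncdicohomology{0}(K;\sheaf{1})\ra\ncdicohomology{0}(K;\constantsheaf{\semimodule{1}}\tensor{1}\sheaf{1})$; when $\semimodule{1}$ is flat, Lemma \ref{lem:superflatness} gives that $\semimodule{1}\tensor{1}-$ carries equalizers to equalizers, so this map is an isomorphism and its inverse is the map of (\ref{eqn:HH0-universal.coefficients}) over $K$.

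To globalize I would pass to the filtered colimit over compact subdigraphs $K\subset X$: since $\semimodule{1}\tensor{1}-$ preserves filtered colimits, $\colim_{K}\bigl(\semimodule{1}\tensor{1}\ncdicohomology{\grading}(K;\sheaf{1})\bigr)=\semimodule{1}\tensor{1}\dicohomology{\grading}(X;\sheaf{1})$ by the colimit formula defining $\dicohomology{\grading}$, so the colimit of the $K$-level maps is the asserted transformation (\ref{eqn:HH0-universal.coefficients}), and a stagewise isomorphism remains an isomorphism after $\colim$. Naturality in $\sheaf{1}$ and in $\semimodule{1}$ is inherited from the functoriality of $\directsum$, of (co)equalizers, of $\tensor{1}$, and of the colimit. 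I expect the only genuinely delicate point --- and precisely the reason the isomorphism claim for $\dicohomology{0}$ needs flatness --- to be that an equalizer is a \emph{limit}, so for a non-flat $\semimodule{1}$ there is no reason for $\semimodule{1}\tensor{1}-$ to send the equalizer diagram of (\ref{eqn:compact.HH0}) to an equalizer; one is then left only with the comparison map above and no control over it. Everything else is bookkeeping with the definitions.
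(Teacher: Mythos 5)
Your argument is correct and is essentially the paper's own proof: reduce to compact $X$, use that $\semimodule{1}\tensor{1}-$ preserves $\directsum$ to identify the (co)chain diagram of $\constantsheaf{\semimodule{1}}\tensor{1}\sheaf{1}$ with $\semimodule{1}\tensor{1}-$ applied to that of $\sheaf{1}$, then invoke preservation of coequalizers for $\dicohomology{1}$ and Lemma \ref{lem:superflatness} for $\dicohomology{0}$ with $\semimodule{1}$ flat. You are in fact slightly more explicit than the paper, which asserts the reduction to compact $X$ and the colimit-preservation of $\semimodule{1}\tensor{1}-$ without the filtered-colimit justification you supply, and which, like you, leaves the degree-$0$ existence of the map (\ref{eqn:HH0-universal.coefficients}) for non-flat $\semimodule{1}$ unaddressed (the canonical equalizer comparison goes in the opposite direction).
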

\begin{proof}
  It suffices to consider the case $X$ compact.
  In the jointly commutative square
  \begin{equation*}
  \xymatrix@C=2pc{
      M\tensor{1}\left(\directsum_{v\in V_X}\;\sheaf{1}(v)\right)
        \ar@<.7ex>[rrrr]^-{(\directsum_{\delminus{.4} e=v}\sheaf{1}(v\facerelation e))\tensor{1}1_M}
        \ar@<-.7ex>[rrrr]_-{(\directsum_{\delplus{.4}e=v}\sheaf{1}(v\facerelation e))\tensor{1}1_M}
        \ar[d]_{\cong}
    &
    &
    &
    & M\tensor{1}\left(\directsum_{e\in E_X}\;\sheaf{1}(c_2)\right)
        \ar[d]^{\cong}
    \\
    \directsum_{v\in V_X}\;M\tensor{1}\sheaf{1}(c)
        \ar@<.7ex>[rrrr]^-{\directsum_{\delminus{.4} e=v}1_M\tensor{1}\sheaf{1}(v\facerelation e)}
        \ar@<-.7ex>[rrrr]_-{\directsum_{\delplus{.4}e=v}1_M\tensor{1}\sheaf{1}(v\facerelation e)}
    &
    &
    &
    & \directsum_{e\in E_X}\;M\tensor{1}\sheaf{1}(c_2),
  }
  \end{equation*}
  the natural vertical arrows are isomorphisms because $M\tensor{1}-$ preserves $\directsum$ [Proposition \ref{prop:tensors}] and hence induce an isomorphism of (co)equalizers.
  The (co)equalizer of the bottom row is the left side of (\ref{eqn:HH0-universal.coefficients}).
  The coequalizer of the top row is the right side of (\ref{eqn:HH0-universal.coefficients}) for the case $\dicohomology{\grading}=\dicohomology{1}$ because $\semimodule{1}\tensor{1}-$ preserves coequalizers [Proposition \ref{prop:tensors}].
  The equalizer of the top row is the right side of (\ref{eqn:HH0-universal.coefficients}) for the case $\dicohomology{\grading}=\dicohomology{0}$ for $\semimodule{1}\tensor{1}-$ equalizer preserving, or equivalently, $\semimodule{1}$ flat [Lemma \ref{lem:superflatness}].
\end{proof}

Ordinary sheaf cohomology is exact.
Directed cohomology comes equipped with connecting homomorphisms from degree $0$ to degree $1$, although the natural analogue of exactness in the semimodule-theoretic setting fails in general.

\begin{defn}
  Fix closed $C\subset X$.
  Define partial $S$-homomorphisms
  $$\connectinghom_-,\connectinghom_+:\dicohomology{0}(C;\sheaf{1})\ra\dicohomology{1}((X,C),\sheaf{1})$$
  by each of the two possible commutative squares of the form
  \begin{equation*}
    \xymatrix@C=5pc{
        **[l]\dicohomology{0}(C;\sheaf{1})
        \ar[r]
        \ar[d]_-{\delta_\pm}
      & \directsum_{v\in V_C}\sheaf{1}(v)  
        \ar[r]
      & **[r]\directsum_{v\in V_{\partial_{\pm}C}}\sheaf{1}(v)
        \ar[d]^-{\directsum_{v}\sum_{e}\sheaf{1}(v\facerelation e)}
      \\
        **[l]\dicohomology{1}((X,C);\sheaf{1})
      & \directsum_{e\in X-C}\sheaf{1}(e)
        \ar[l]
      & **[r]\directsum_{\delplus{.4}e\in\partial_{\pm}C}\sheaf{1}(e),
        \ar[l]
    }
  \end{equation*}
  where the left horizontal arrows are universal arrows and the right horizontal arrows are induced by projection and inclusion.
\end{defn}

Sequences of directed (co)homology $S$-semimodules connected by such partial $S$-homomorphisms sometimes exhibit a natural generalization of exactness.

\begin{prop}
  \label{prop:Ab.coconnecting.maps}
  Let $S$ be a ring.
  For an $S$-sheaf $\sheaf{1}$ on $\digraph{1}$ and closed $C\subset X$,
  $$\connectinghom_+-\connectinghom_-:\dicohomology{0}(C;\sheaf{1})\ra\dicohomology{1}((X,C);\sheaf{1})$$
  is the ordinary connecting homomorphism for compactly supported Abelian sheaf cohomology.
\end{prop}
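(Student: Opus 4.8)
The plan is to realise $\connectinghom_+-\connectinghom_-$ as the snake-lemma connecting homomorphism of a short exact sequence of cellular cochain complexes, and then to recognise that sequence as the one computing compactly supported sheaf cohomology.

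\emph{Reduction to the compact case.} By construction both $\dicohomology{0}(C;\sheaf{1})$ and $\dicohomology{1}((X,C);\sheaf{1})$ are filtered colimits, over the compact subdigraphs $K\subset X$, of the corresponding groups for $(K,C\cap K)$ (note $C\cap K$ is closed in $K$ since $C$ is closed in $X$), and the maps $\connectinghom_\pm$ are defined compactwise and are compatible with these colimit systems; the same holds for the classical connecting homomorphism for compact supports. Since filtered colimits commute with finite limits in $\COEFFICIENTS$, it suffices to prove the identity when $X$ is compact and $C\subset X$ is closed.

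\emph{The relative cochain complex.} Over the ring $S$ the category $\CONSTRAINTS=\COEFFICIENTS$ is abelian [Corollary \ref{cor:ab.categories}], and by Proposition \ref{prop:classical.cohomologie} the groups $\ncdicohomology{\bullet}(X;\sheaf{1})$ are the cohomology of the two-term \v{C}ech complex $C^\bullet(X;\sheaf{1})$ with $C^0(X;\sheaf{1})=\directsum_{v\in\vertices{X}}\sheaf{1}(v)$, $C^1(X;\sheaf{1})=\directsum_{e\in\edges{X}}\sheaf{1}(e)$ and differential $d$ the difference of the two parallel arrows of (\ref{eqn:compact.HH0}). The cell decompositions $\vertices{X}=(\vertices{X}-\vertices{C})\sqcup\vertices{C}$ and, because $C$ is closed, $\edges{X}=(\edges{X}-\edges{C})\sqcup\edges{C}$ split the restriction $C^\bullet(X;\sheaf{1})\twoheadrightarrow C^\bullet(C;\sheaf{1})$ in each degree; its kernel is the complex $C^\bullet((X,C);\sheaf{1})$ supported on the cells of $X$ outside $C$, whose $H^1$ is exactly the cokernel of $\directsum_{v\in\vertices{X}-\vertices{C}}\sheaf{1}(v)\to\directsum_{e\in\edges{X}-\edges{C}}\sheaf{1}(e)$ appearing along the lower edge of the square defining $\connectinghom_\pm$, so $\ncdicohomology{1}((X,C);\sheaf{1})=H^1(C^\bullet((X,C);\sheaf{1}))$. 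Identifying $C^\bullet((X,C);\sheaf{1})$ with the cellular cochain complex of $j_!j^*\sheaf{1}$ for $j\colon X-C\ira X$, this short exact sequence of complexes is the one computing compact supports attached to $0\to j_!j^*\sheaf{1}\to\sheaf{1}\to i_*i^*\sheaf{1}\to 0$; hence the connecting map $\ncdicohomology{0}(C;\sheaf{1})\to\ncdicohomology{1}((X,C);\sheaf{1})$ of its long exact sequence is the ordinary connecting homomorphism of the statement.

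\emph{Computing the snake map.} Take a cocycle $\sigma\in C^0(C;\sheaf{1})$. The degreewise splitting gives the lift $\tilde\sigma\in C^0(X;\sheaf{1})$ obtained by extension by zero; since $d\sigma=0$, the cochain $d\tilde\sigma$ lies in $C^1((X,C);\sheaf{1})$, and its class is the value of the connecting map on $[\sigma]$. Evaluating coordinatewise, $(d\tilde\sigma)(e)=\sheaf{1}(\delminus{.4}e\facerelation e)(\tilde\sigma(\delminus{.4}e))-\sheaf{1}(\delplus{.4}e\facerelation e)(\tilde\sigma(\delplus{.4}e))$ for $e\notin\edges{C}$; the first summand vanishes unless $\delminus{.4}e\in\vertices{C}$ and the second unless $\delplus{.4}e\in\vertices{C}$, i.e.\ unless the relevant endpoint lies in $\partial_-C$ (resp.\ $\partial_+C$). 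Thus the two halves of $d\tilde\sigma$ are precisely the two composites around the square defining $\connectinghom_-$ and $\connectinghom_+$, so their difference is $(\connectinghom_+-\connectinghom_-)[\sigma]$; naturality in $\sheaf{1}$ is automatic since every ingredient is functorial. \textbf{The one point needing care} --- and the only real obstacle --- is the sign and index bookkeeping: one must check that the orientation convention fixing $d$ in (\ref{eqn:compact.HH0}), together with the labelling that assigns the $\delminus{.4}$-endpoint to $\connectinghom_-$ and the $\delplus{.4}$-endpoint to $\connectinghom_+$, makes the difference come out as $\connectinghom_+-\connectinghom_-$ rather than its negative.
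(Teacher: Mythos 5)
Your proof is correct, but it takes a genuinely different route from the paper's. The paper verifies directly that the four-term sequence $\dicohomology{0}(X;\sheaf{1})\ra\dicohomology{0}(C;\sheaf{1})\xra{\connectinghom_+-\connectinghom_-}\dicohomology{1}((X,C);\sheaf{1})\ra\dicohomology{1}(X;\sheaf{1})$ is exact for the constant sheaf $\constantsheaf{S}$, invokes the Eilenberg--Steenrod uniqueness of the connecting homomorphism for that case, and then extends to arbitrary cellular sheaves of $S$-modules ``by naturality.'' You instead exhibit $\connectinghom_+-\connectinghom_-$ at the chain level as the snake-lemma map of the degreewise-split short exact sequence of cellular cochain complexes attached to $0\ra j_!j^*\sheaf{1}\ra\sheaf{1}\ra i_*i^*\sheaf{1}\ra 0$, which works uniformly for every cellular sheaf with no axiomatic detour and no separate naturality step; the one substantive input you use that the paper's proof does not need explicitly is that closedness of $C$ makes the cells outside $C$ span a subcomplex, and you correctly justify this. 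Your approach buys an explicit cocycle-level formula; the paper's buys brevity at the cost of leaving both the exactness verification and the naturality extension as assertions. The sign check you flag at the end is a real loose end in your write-up, but it is no worse than the paper's own treatment: Eilenberg--Steenrod uniqueness likewise only pins down the connecting map up to a natural sign, so neither argument as written distinguishes $\connectinghom_+-\connectinghom_-$ from its negative without unwinding the convention for the coboundary in (\ref{eqn:compact.HH0}).
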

\begin{proof}
  It is straightforward to verify that the sequence
  \begin{equation*}
    \dicohomology{0}(X;\sheaf{1})\xra{\dicohomology{0}(C\subset X)}\dicohomology{0}(C;\sheaf{1})\xra{\connectinghom_+-\connectinghom_-}\dicohomology{1}((X,C);\sheaf{1})\xra{\dicohomology{1}(X-C\subset X)}\dicohomology{1}(X;\sheaf{1})
  \end{equation*}
  is exact at each of its terms, and hence the lemma holds, for $\sheaf{1}=\constantsheaf{S}$, because ordinary simplicial Abelian cohomology is characterized by the Eilenberg-Steenrod axioms and $\dicohomology{\grading}(X;-)$ is ordinary compactly supported sheaf cohomology.
  The lemma holds for general cellular sheaves $\sheaf{1}$ of $S$-modules by naturality.
\end{proof}

\subsection{${\bf \dihomology{\grading}}$}\label{subsec:H0}
Zeroth and first homology are coequalizers and equalizers of a diagram of chains.

\begin{defn}
  \label{defn:homologie}
  Let $\dihomology{\grading}(X;\sheaf{1})$ be defined by the equalizer and coequalizer diagrams
  \begin{equation*}
    \xymatrix@C=1pc{
        \dihomology{1}(X;\sheaf{1})
        \ar@{.>}[r]
      & \directsum_{e\in E_{\sd X}}\;\dicohomology{0}(\langle e\rangle;\sd\sheaf{1})
        \ar@<.7ex>[rrrrr]^-{\directsum_{e}\dicohomology{0}(\delminus{.4} e\subset\langle e\rangle;\sd\sheaf{1})}
        \ar@<-.7ex>[rrrrr]_-{\directsum_{e}\dicohomology{0}(\delminus{.4} e\subset\langle e\rangle;\sd\sheaf{1})}
      & & & & & \directsum_{v\in V_{\sd X}}\;(\sd\sheaf{1})(c)
        \ar@{.>}[r]
      & \dihomology{0}(X;\sheaf{1})
    }
  \end{equation*}
  natural in direct sums in $\Sh_{X;\CONSTRAINTS}$ of pushforwards of constant partial $S$-sheaves of projective partial $S$-semimodules.
\end{defn}

First directed sheaf homology $\dihomology{1}(X;\sheaf{1})$ corresponds to an existing homology theory on the \textit{cellular cosheaf} on $X$ defined by pulling back $\sd\sheaf{1}$ along closed cells, at least when the ground semiring is a ring \cite{bredon1997sheaf}.

\begin{eg}
  \label{eg:parallel.transport}
  For $X$ and $\sheaf{1}$ the digraph and $\Lambda$-sheaf on $X$ from Example \ref{eg:etale},
  $$\dihomology{0}(X;\sheaf{1})=\Big\{\{\lambda_{11},\lambda_1\},\{\lambda_{12},\lambda_2,\lambda_{22},\lambda_{21}\}\Big\}\cong\Lambda.$$
\end{eg}

The definition of homology extends to general sheaves by taking appropriate analogues of projective resolutions.

\begin{prop}
  \label{prop:H1}
  The partial $S$-semimodule $\dihomology{\grading}(\digraph{1};\sheaf{1})$ defined by the coequalizer diagram
  \begin{equation*}
    \xymatrix@C=2pc{
      \dihomology{\grading}(\digraph{1};\sheaf{1}_1)
        \ar@<.7ex>[rr]
        \ar@<-.7ex>[rr]
    & &
     \dihomology{\grading}(\digraph{1};\sheaf{1}_0)
        \ar@{.>}[rr]
    & & \dihomology{\grading}(\digraph{1};\sheaf{1}),
    }
  \end{equation*}
  is independent of a choice of c-sectionwise coequalizer diagram $\sheaf{1}_1\rra\sheaf{1}_0\ra\sheaf{1}$ of partial $S$-sheaves with $\sheaf{1}_0,\sheaf{1}_1$ pushforwards of constant partial $S$-sheaves of projective partial $S$-semimodules.
\end{prop}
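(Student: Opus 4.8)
The plan is to run the standard \emph{independence-of-resolution} argument from relative homological algebra. Call a partial $S$-sheaf \emph{admissible} if it is a direct sum of pushforwards of constant partial $S$-sheaves at projective partial $S$-semimodules; these play the role of the ``projectives'', and the c-sectionwise surjections play the role of the ``admissible epimorphisms''. Since $\dihomology{\grading}(X;-)$ is already defined on admissible sheaves by Definition \ref{defn:homologie}, and Lemma \ref{lem:cogodement} produces, for every partial $S$-sheaf $\sheaf{1}$, an admissible sheaf c-sectionwise surjecting onto $\sheaf{1}$, the whole statement reduces to a comparison theorem between two given c-sectionwise coequalizer diagrams $D\colon\sheaf{1}_1\rra\sheaf{1}_0\ra\sheaf{1}$ and $D'\colon\sheaf{1}'_1\rra\sheaf{1}'_0\ra\sheaf{1}$ of admissible sheaves.

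The first ingredient is a \textbf{lifting lemma}: every admissible sheaf $P$ has the left lifting property against c-sectionwise surjections, i.e.\ every natural transformation $P\ra\sheaf{3}$ factors through any c-sectionwise surjection $\sheaf{2}\ra\sheaf{3}$. One reduces to a single summand $(C\subset X)_{*}\constantsheaf{Q}$ with $Q$ a projective partial $S$-semimodule, rewrites the relevant hom-sets of partial $S$-sheaves in terms of partial $S$-semimodules of c-sections (invoking Propositions \ref{prop:partial.semimodules} and \ref{prop:partial.homomorphisms} to handle partiality of the homomorphisms), and then lifts using projectivity of $Q$ in $\CONSTRAINTS$ together with the defining surjectivity of $\dicohomology{0}(U;-)$ on c-sectionwise surjections.

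The second ingredient is the comparison itself. Using the lifting lemma, lift the c-sectionwise surjection $\sheaf{1}_0\ra\sheaf{1}$ through $\sheaf{1}'_0\ra\sheaf{1}$ to get $f_0\colon\sheaf{1}_0\ra\sheaf{1}'_0$ over $\id_{\sheaf{1}}$; since $D'$ is a c-sectionwise coequalizer diagram, the induced pair $(f_0 d_0,f_0 d_1)$ factors c-sectionwise through the kernel pair of $\sheaf{1}'_0\ra\sheaf{1}$, so a second application of the lifting lemma yields $f_1\colon\sheaf{1}_1\ra\sheaf{1}'_1$ assembling with $f_0$ into a map of diagrams $f\colon D\ra D'$; symmetrically one builds $g\colon D'\ra D$. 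Then $g_0 f_0$ again lies over $\id_{\sheaf{1}}$, so $(\id_{\sheaf{1}_0},g_0 f_0)$ factors c-sectionwise through the kernel pair of $\sheaf{1}_0\ra\sheaf{1}$ and the lifting lemma produces a ``homotopy'' $h\colon\sheaf{1}_0\ra\sheaf{1}_1$ with $d_0 h=\id_{\sheaf{1}_0}$ and $d_1 h=g_0 f_0$, and symmetrically for $f g$. Applying $\dihomology{\grading}(X;-)$ throughout, $f$ and $g$ induce maps $\alpha,\beta$ between the two coequalizers (because $\dihomology{\grading}(X;f_0)$ intertwines the parallel pairs via $\dihomology{\grading}(X;f_1)$, and likewise for $g$); composing with the coequalizer projection makes $\dihomology{\grading}(X;d_0)$ and $\dihomology{\grading}(X;d_1)$ equal, hence $\dihomology{\grading}(X;d_0 h)=\dihomology{\grading}(X;\id)$ and $\dihomology{\grading}(X;d_1 h)=\dihomology{\grading}(X;g_0 f_0)$ become equal, and since $\dihomology{\grading}(X;\sheaf{1}_0)$ surjects onto the coequalizer this forces $\beta\alpha=\id$; symmetrically $\alpha\beta=\id$, so $\alpha$ is the desired isomorphism.

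The delicate points are all artifacts of the relative, non-additive setting. First, the lifting lemma must be run for partial (not total) $S$-homomorphisms and for pushforwards from possibly non-open cells, so the hom-set bookkeeping via Propositions \ref{prop:partial.semimodules} and \ref{prop:partial.homomorphisms} needs care. Second — and this is the crux — with no subtraction available, a ``homotopy'' cannot be a chain contraction but must be encoded by a parallel pair factoring through a kernel pair, and that factorization in the comparison step genuinely uses that $D$ and $D'$ are c-sectionwise \emph{coequalizer} diagrams, not merely that their augmentations are c-sectionwise surjective; checking that this kernel-pair factorization survives applying $\dihomology{\grading}(X;-)$ and descends correctly to the coequalizers is the main thing to get right.
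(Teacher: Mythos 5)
Your argument takes a genuinely different route from the paper's, and as written it has a gap at exactly the two places you flag as the crux. Both the construction of $f_1$ and the construction of the homotopy $h$ require you to lift a map into the kernel pair $K'$ of $\sheaf{1}'_0\ra\sheaf{1}$ (resp.\ $K$ of $\sheaf{1}_0\ra\sheaf{1}$) through the canonical map $\sheaf{1}'_1\ra K'$ (resp.\ $\sheaf{1}_1\ra K$). Your lifting lemma only applies against c-sectionwise surjections, and the hypothesis that $D'$ is a c-sectionwise coequalizer diagram does not make $\sheaf{1}'_1\ra K'$ a c-sectionwise surjection: in a coequalizer presentation of partial $S$-semimodules the relation object only \emph{generates} the kernel-pair congruence, via reflexive, symmetric, transitive and additive closure, so a pair of c-sections equalized by the augmentation need not be hit by a c-section of $\sheaf{1}'_1$. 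Without $f_1$ you do not even obtain the induced map $\alpha$ between the coequalizers, and without $h$ you cannot conclude $\beta\alpha=\id$. Repairing this by replacing the given resolutions with c-sectionwise kernel-pair presentations forces you to first show that such a replacement does not change the coequalizer, which is essentially an instance of the proposition you are proving.

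The paper sidesteps all level-$1$ comparisons. It extends the defining (co)equalizer diagram to a functor $H'_{\grading}(X;-)$ on \emph{all} partial $S$-sheaves, identifies $\dihomology{\grading}(X;\sheaf{1})$ with the image of $H'_{\grading}(X;\epsilon)$ inside $H'_{\grading}(X;\sheaf{1})$ for $\epsilon:\sheaf{1}_0\ra\sheaf{1}$ the augmentation, and then compares two resolutions by a single level-$0$ lift $\eta$ satisfying $\epsilon''\circ\eta=\epsilon'$, which yields $\im H'_{\grading}(X;\epsilon')\subset\im H'_{\grading}(X;\epsilon'')$ and equality by symmetry. The price is the identification of the coequalizer with that image (asserted rather than proved in the paper); the benefit is that neither $\sheaf{1}_1$ nor any homotopy ever enters, which is precisely what lets the argument survive in the non-additive, partial setting. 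If you want to keep your comparison-theorem architecture, you must either prove that image identification yourself or establish the kernel-pair surjectivity your lifts implicitly assume.
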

\begin{proof}
  Let $H'_{\grading}(X;\sheaf{1})$ be defined by the equalizer and coequalizer diagrams
  \begin{equation*}
    \xymatrix@C=1pc{
        H'_1(X;\sheaf{1})
        \ar@{.>}[r]
      & \directsum_{e\in E_{\sd X}}\;\dicohomology{0}(\langle e\rangle;\sd\sheaf{1})
        \ar@<.7ex>[rrrrr]^-{\directsum_{e}\dicohomology{0}(\delminus{.4} e\subset\langle e\rangle;\sd\sheaf{1})}
        \ar@<-.7ex>[rrrrr]_-{\directsum_{e}\dicohomology{0}(\delminus{.4} e\subset\langle e\rangle;\sd\sheaf{1})}
      & & & & & \directsum_{v\in V_{\sd X}}\;(\sd\sheaf{1})(c)
        \ar@{.>}[r]
      & H'_0(X;\sheaf{1})
    }
  \end{equation*}
  natural in $\sheaf{1}$.
  For a pair of c-sectionwise surjections
  $$\epsilon':\sheaf{1}'_0\ra\sheaf{1},\quad\epsilon'':\sheaf{1}''_0\ra\sheaf{1}$$
  from pushforwards of constant partial $S$-sheaves of projective partial $S$-semimodules, there exists a natural transformation $\eta:\sheaf{1}'_0\ra\sheaf{1}''_0$ such that $\epsilon''\circ\eta=\epsilon'$ by projectivity and hence $\im\;H'_\grading(X;\epsilon')\subset\im\;H'_\grading(X;\epsilon'')$ and hence also $\im\;H'_\grading(X;\epsilon'')\subset\im\;H'_\grading(X;\epsilon')$ by symmetry.  
  Hence $\dihomology{\grading}(X;\sheaf{1})=\im\;H'_\grading(X;\epsilon')=\im\;H'_\grading(X;\epsilon'')$.
\end{proof}


Unlike relative first cohomology, relative first homology is not defined as an absolute homology theory on a subdigraph.  

\begin{defn}
  \label{defn:relative.H1}
  Let $\dihomology{1}((X,U);\sheaf{1})$ denote the partial $S$-semimodule
  $$\dihomology{1}((X,U);\sheaf{1})=\dihomology{1}(X;(X-U\subset X)_*\constantsheaf{\semiring{1}}\tensor{1}\sheaf{1})$$
  for each digraph $X$, open $U\subset X$, and partial $S$-sheaf $\sheaf{1}$ on $X$.
\end{defn}

\begin{prop}
  \label{prop:sd-homologie}
  There exist isomorphisms
  $$\dihomology{\grading}(X;\sheaf{1})\cong\dihomology{\grading}(\sd X;\sd\sheaf{1})$$
  natural in partial $S$-sheaves $\sheaf{1}$ on digraphs $X$.
\end{prop}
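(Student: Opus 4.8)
The plan is to follow the template of Proposition \ref{prop:sd-cohomologie}: construct a natural, jointly commutative comparison between the diagram of chains that defines $\dihomology{\grading}(X;\sheaf{1})$ and the diagram that defines $\dihomology{\grading}(\sd X;\sd\sheaf{1})$, and then verify by a direct chase that the induced maps on its equalizer ($\dihomology{1}$) and coequalizer ($\dihomology{0}$) are isomorphisms. Since the diagram of Definition \ref{defn:homologie} only computes $\dihomology{\grading}$ on direct sums of pushforwards of constant partial $S$-sheaves of projective partial $S$-semimodules, I would argue in two stages: first for such $\sheaf{1}$, where both sides are given by the explicit diagram; then for arbitrary $\sheaf{1}$ via Proposition \ref{prop:H1}.

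For the first stage, I would begin by noting that $\sd$ preserves the relevant class of sheaves: the formulas for $\sd$ and for $(C\subset X)_*$ give $\sd\big((C\subset X)_*\constantsheaf{P}\big)=(C'\subset\sd X)_*\constantsheaf{P}$ with $C'=C\cup\{e_-,e_+\mid e\in C\cap E_X\}$, and $\sd$ commutes with $\directsum$. So for such $\sheaf{1}$ both $\dihomology{\grading}(X;\sheaf{1})$ and $\dihomology{\grading}(\sd X;\sd\sheaf{1})$ are given by Definition \ref{defn:homologie}, the latter built from the double subdivision $\sd\sd X$. I would then unwind the $\sd\sd X$ picture: every edge of $\sd\sd X$ has the form $h_-$ or $h_+$ for a unique edge $h$ of $\sd X$, and within each closed cell $\langle h_\pm\rangle$ one of the two restriction maps of $\sd\sd\sheaf{1}$ is an identity, by the rule $(\sd\mathcal G)(e\facerelation e_\pm)=1_{\mathcal G(e)}$; hence each $\dicohomology{0}(\langle h_\pm\rangle;\sd\sd\sheaf{1})$ is naturally a single stalk of $\sd\sheaf{1}$, and the $(\sd X,\sd\sheaf{1})$-diagram is naturally isomorphic to a refinement of the $(X,\sheaf{1})$-diagram obtained by interpolating, for each $e\in E_X$, the two cells $e_-,e_+$ of the subdivision (each carrying the stalk $\sheaf{1}(e)$) linked to their neighbours by identity restriction maps. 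I would then write down the natural collapse morphisms realizing the refined diagram over the original one and check, just as in the proof of Proposition \ref{prop:sd-cohomologie}, that on equalizers the induced map is injective (an equalizer element of the refined diagram is determined by the components indexed by cells of $X$, the redundant cells $e_\pm$ being forced by the identity restrictions) and surjective (every equalizer element downstairs lifts, necessarily uniquely, along the identity restrictions), and dually for the coequalizers.

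For the second stage, starting from a functorial c-sectionwise coequalizer resolution $\sheaf{1}_1\rra\sheaf{1}_0\ra\sheaf{1}$ by good sheaves on $X$ (available since $\CONSTRAINTS$ has enough projectives), I would show $\sd\sheaf{1}_1\rra\sd\sheaf{1}_0\ra\sd\sheaf{1}$ is again a c-sectionwise coequalizer resolution by good sheaves on $\sd X$; then Proposition \ref{prop:H1}, the first-stage isomorphism applied to $\sheaf{1}_0$ and $\sheaf{1}_1$, and naturality force the isomorphism $\dihomology{\grading}(X;\sheaf{1})\cong\dihomology{\grading}(\sd X;\sd\sheaf{1})$, natural in $\sheaf{1}$. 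Goodness is preserved by the $\sd$-formula above; the real point is that $\dicohomology{0}(W;\sd\epsilon)$ remains a coequalizer for every open $W\subset\sd X$. I would deduce this from a lemma, proved by the same mechanism as Proposition \ref{prop:sd-cohomologie} applied compact-piece by compact-piece, that $\dicohomology{0}(W;\sd\mathcal G)$ is naturally isomorphic to $\dicohomology{0}(W\cap X;(W\cap X\subset X)^*\mathcal G)$ --- the extra subdivision cells appearing in compact subdigraphs of $W$ are pinned down by identity restrictions and so are redundant in the equalizer computing $\dicohomology{0}$ --- which transports the c-sectionwise coequalizer property from $X$ to $\sd X$.

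The main obstacle is this last lemma about $\dicohomology{0}$ over the opens of $\sd X$, together with the purely combinatorial but error-prone bookkeeping for the $4|E_X|$ edges and $2|E_X|$ new vertices of $\sd\sd X$ and for which of their restriction maps degenerate to identities. Once that combinatorics is organized and the lemma is in hand, the equalizer/coequalizer chase delivering the isomorphisms is routine, following the pattern already set by Proposition \ref{prop:sd-cohomologie}.
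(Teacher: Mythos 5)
Your route is genuinely different from the paper's. The paper disposes of this proposition in three lines by combining Theorem \ref{thm:pd} (which identifies $\dihomology{1}((X,U);\sheaf{1})$ with $\dicohomology{0}(X-U;\orientation{S}\tensor{1}\sheaf{1})$ and relates $\dihomology{0}$ to $\dicohomology{1}$ of the twisted sheaf) with the already-established subdivision invariance of cohomology (Proposition \ref{prop:sd-cohomologie}) and of the orientation sheaf (Lemma \ref{lem:sd-orientation}). Your first stage --- collapsing the $\sd\sd X$ chain diagram onto the $\sd X$ chain diagram by exploiting that one of the two restriction maps of $\sd\sd\sheaf{1}$ inside each closed cell $\langle h_{\pm}\rangle$ is an identity, so that $\dicohomology{0}(\langle h_{\pm}\rangle;\sd\sd\sheaf{1})$ is a single stalk of $\sd\sheaf{1}$ --- is sound and is a reasonable hands-on substitute; it even treats $\dihomology{0}$ uniformly, where the duality route needs degree hypotheses plus an appeal to naturality.

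The gap is in your second stage. The auxiliary lemma you rely on, that $\dicohomology{0}(W;\sd\mathcal{G})\cong\dicohomology{0}(W\cap X;(W\cap X\subset X)^*\mathcal{G})$ for open $W\subset\sd X$, is false: take $W=\{f,f_-,f_+\}$, the minimal open neighborhood in $\sd X$ of an edge $f\in E_X$ regarded as a vertex of $\sd X$. The only nonempty compact subdigraph of $W$ is the isolated vertex $\{f\}$ (each of $f_-,f_+$ is missing an endpoint in $W$), so $\dicohomology{0}(W;\sd\mathcal{G})=\mathcal{G}(f)$, whereas $W\cap X=\{f\}$ is a single edge of $X$ with no vertices and $\dicohomology{0}(\{f\};\mathcal{G})=0$. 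Consequently $\sd$ does \emph{not} preserve c-sectionwise surjections: $\dicohomology{0}(W;\sd\epsilon)$ is the stalk map $\epsilon_f$ at the edge $f$, which c-sectionwise surjectivity of $\epsilon$ over $X$ does not control (the minimal open of $X$ containing $f$ is $\{f\}$ itself, whose $\dicohomology{0}$ vanishes), and which in fact fails for the resolution of Lemma \ref{lem:cogodement}, whose counit at an edge factors through the image of a restriction map $\sheaf{1}(v\facerelation f)$. So the subdivided resolution is not a c-sectionwise coequalizer diagram over $\sd X$ and Proposition \ref{prop:H1} cannot be applied to it as you propose. A repair would augment the resolution by summands $(\{f\}\subset X)_*\constantsheaf{P_f}$ with $P_f$ projective surjecting onto each edge stalk and then verify c-sectionwise surjectivity of its subdivision over every open of $\sd X$; alternatively, follow the paper and route the statement through Theorem \ref{thm:pd}.
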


For the case where each vertex in $X$ have positive in-degree and out-degree, a proof follows from Theorem \ref{thm:pd}.
The general case follows from naturality.

Under certain local geometric or algebraic criteria, the construction of $\dihomology{1}$ requires neither that the digraph $X$ be subdivided nor that the coefficients $\sheaf{1}$ be replaced by a generalized resolution $\sheaf{1}_1\rra\sheaf{1}_0$.

\begin{thm}
  \label{thm:equalizer}
  Fix a digraph $\digraph{1}$.
  There exists a monic dotted arrow making the diagram
  \begin{equation*}
    \xymatrix@C=2pc{
      \dihomology{1}(\digraph{1};\sheaf{1})
        \ar@{.>}[r]
    & \directsum_{e\in\edges{\digraph{1}}}\dicohomology{0}(\langle e\rangle;\sheaf{1})
        \ar@<.7ex>[rrrr]^-{\directsum_v\sum_{\delminus{.4} e=v}\dicohomology{0}(\delminus{.4} e\subset\langle e\rangle)}
        \ar@<-.7ex>[rrrr]_-{\directsum_v\sum_{\delminus{.4} e=v}\dicohomology{0}(\delplus{.4}e\subset\langle e\rangle)}
    & & &
    & \directsum_{v\in\vertices{\digraph{1}}}\sheaf{1}(v),
    }
  \end{equation*}
  natural in partial $S$-sheaves $\sheaf{1}$ on $\digraph{1}$, commute.
  Furthermore, the above diagram is an equalizer diagram if $S$ is a ring or for each $v\in V_X$, $\sheaf{1}(v)$ is flat, the in-degree of $v$ is $1$, or the out-degree of $v$ is $1$.
\end{thm}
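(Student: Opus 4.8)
The plan is to reduce the statement, via Proposition \ref{prop:sd-homologie}, to a comparison between the equalizer of the proposed diagram on $X$ and the equalizer of the diagram defining $\dihomology{1}(\sd X;\sd\sheaf{1})$ from Definition \ref{defn:homologie}. First I would observe that for each edge $e$ of $X$ the closed subdigraph $\langle e\rangle$ of $X$ has, after subdivision, two edges $e_-,e_+$ with $\langle e_-\rangle=\{\delminus{.4}e,e\}$ and $\langle e_+\rangle=\{e,\delplus{.4}e\}$; since $(\sd\sheaf{1})(e\facerelation e_\pm)=1_{\sheaf{1}(e)}$, both $\dicohomology{0}(\langle e_-\rangle;\sd\sheaf{1})$ and $\dicohomology{0}(\langle e_+\rangle;\sd\sheaf{1})$ sit between $\sheaf{1}(e)$ and a direct sum of a vertex stalk, so each is simply the equalizer of a single restriction map. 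The map $\dihomology{1}(X;\sheaf{1})\ra\directsum_{e}\dicohomology{0}(\langle e\rangle;\sheaf{1})$ is then constructed by sending a subdivided $1$-chain $(\sigma_{e_-},\sigma_{e_+})$ to the data it determines on $\langle e\rangle$; the equalizer condition at the vertex $e\in V_{\sd X}$ forces $\sigma_{e_-}$ and $\sigma_{e_+}$ to agree in $\sheaf{1}(e)$, which is exactly the compatibility making a well-defined element of $\dicohomology{0}(\langle e\rangle;\sheaf{1})$ (here using that $\dicohomology{0}(\langle e\rangle;\sheaf{1})$, an equalizer of restrictions out of $\sheaf{1}(\delminus{.4}e)\oplus\sheaf{1}(\delplus{.4}e)$ into $\sheaf{1}(e)$, records precisely a pair of vertex-germs restricting to a common element of $\sheaf{1}(e)$). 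Commutativity of the displayed square is then a diagram chase comparing the two boundary maps on $\sd X$ with $\directsum_v\sum\dicohomology{0}(\delminus{.4}e\subset\langle e\rangle)$ and $\directsum_v\sum\dicohomology{0}(\delplus{.4}e\subset\langle e\rangle)$ on $X$, and monicity of the dotted arrow follows because the $\sd X$-chain is recoverable from its image: $\sigma_{e_-}$ is the $\delminus{.4}e$-germ of the $e$-coordinate and $\sigma_{e_+}$ the $\delplus{.4}e$-germ, both of which are projections of the image datum.

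For the ``furthermore'' clause I would argue that the proposed diagram is an equalizer exactly when passing from the subdivided diagram to the unsubdivided one does not lose the limit, i.e.\ when the canonical comparison is surjective onto the equalizer. The ring case is immediate: over a ring $\CONSTRAINTS=\COEFFICIENTS$ is abelian (Corollary \ref{cor:ab.categories}), subdivision is a homology isomorphism (Proposition \ref{prop:sd-homologie} together with Proposition \ref{prop:classical.cohomologie}), and both diagrams compute the kernel of the same boundary operator on the cochain complex, so they agree. For the local-degree and flatness cases the key point is local: one reduces, by taking direct sums over vertices and using that $\directsum$ commutes with equalizers in $\CONSTRAINTS$ (as in Lemma \ref{lem:superflatness}), to checking at a single vertex $v$ that the natural map from the equalizer of
\[
  \Bigl(\bigoplus_{\delminus{.4}e=v}\sheaf{1}(e)\Bigr)\oplus\Bigl(\bigoplus_{\delplus{.4}e=v}\sheaf{1}(e)\Bigr)\rra\sheaf{1}(v)
\]
computed via $\langle e\rangle$'s is all of the corresponding equalizer in $\dihomology{1}(\sd X;\sd\sheaf{1})$. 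If the in-degree of $v$ is $1$ (dually, out-degree $1$), there is a unique incoming edge and the ``equalizer'' at $v$ is just a pullback of a single map, so no subdivision is needed and surjectivity is automatic. If $\sheaf{1}(v)$ is flat, one uses Definition \ref{defn:homologie}: the homology is computed from a c-sectionwise coequalizer $\sheaf{1}_1\rra\sheaf{1}_0\ra\sheaf{1}$ by projective partial $S$-sheaves, and flatness of $\sheaf{1}(v)$ lets the relevant tensor-with-$\sheaf{1}(v)$ functor preserve the equalizer (Lemma \ref{lem:superflatness}), so the resolution is unnecessary and the unsubdivided diagram already computes $\dihomology{1}$.

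The main obstacle I expect is the flatness case: unlike the in-degree-one or ring cases, which are structural, there one must genuinely track how the defining coequalizer-of-projectives in Definition \ref{defn:homologie} interacts with the equalizer at $v$, and show that flatness of the single stalk $\sheaf{1}(v)$ is enough to make the ``twist then take equalizer'' and ``take equalizer then twist'' operations commute locally at $v$ while the remaining vertices are handled by whichever of the three hypotheses applies there. A secondary subtlety is bookkeeping with partial operations: the equalizers here live in $\CONSTRAINTS$, where Proposition \ref{prop:co.equalizers} guarantees they are computed on underlying sets, so the set-level arguments above are legitimate, but one must be careful that the comparison map is a genuine partial $S$-homomorphism (its domain of definition is a partial $S$-subsemimodule), which follows from Proposition \ref{prop:partial.homomorphisms} once the set-map is shown to be a restriction of an $S$-homomorphism on the presented semimodules.
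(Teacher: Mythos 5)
There is a genuine gap in the ``furthermore'' clause, and it stems from where you locate the difficulty. You treat the problem as one of comparing the subdivided equalizer with the unsubdivided one, but those two naive equalizers are always isomorphic: the balance condition at a subdivision vertex $e\in E_X\subset V_{\sd X}$ is precisely the compatibility defining $\dicohomology{0}(\langle e\rangle;\sheaf{1})$, exactly as you compute, so subdivision is not where the content lies. The actual issue is that, by Proposition \ref{prop:H1}, for a general partial $S$-sheaf the object $\dihomology{1}(X;\sheaf{1})$ is \emph{not} the naive equalizer of either diagram; it is only the image of the naive equalizer for a c-sectionwise surjection $\sheaf{1}_0\ra\sheaf{1}$ from a sheaf with projective stalks. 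The ``furthermore'' therefore asserts that this image fills up the entire equalizer under the stated hypotheses, and your arguments for this --- ``surjectivity is automatic'' in the degree-one case, ``the resolution is unnecessary'' in the flat case --- are exactly the statements that need proof. A coordinate-wise lift of an element of the equalizer to $\sheaf{1}_0$ will in general violate the compatibilities over the edge stalks $\sheaf{1}_0(e)$, so local liftability at each vertex does not obviously globalize.

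The paper closes this gap with machinery your proposal never invokes: the orientation sheaf and Poincar\'{e} duality. The full equalizer of the unsubdivided diagram is $\dicohomology{0}(X;-)$ applied to the stalkwise-equalizer sheaf, since $\dicohomology{0}(X;-)$ is a limit and preserves equalizers; Lemma \ref{lem:tensored.orientations} identifies that stalkwise equalizer with $\orientation{S}\tensor{1}\sheaf{1}$ under exactly the three hypotheses (by flatness directly, by the Universal Coefficients Theorem when $S$ is a ring, and by the freeness of $\orientation{S}(v)$ from Lemma \ref{lem:generating-orientations} when the in- or out-degree is $1$); this yields Lemma \ref{lem:pre.pd}. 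Theorem \ref{thm:pd} then supplies the isomorphism $\dihomology{1}(X;\sheaf{1})\cong\dicohomology{0}(X;\orientation{S}\tensor{1}\sheaf{1})$, valid for all partial $S$-sheaves, and the two facts together give the theorem. Your local criteria are the right ones --- they are the hypotheses of Lemma \ref{lem:tensored.orientations} --- but without the orientation sheaf, or some equivalent device packaging the local equalizers into a sheaf whose $\dicohomology{0}$ is already known to compute $\dihomology{1}$, the passage from the local computation at each vertex to the global surjectivity statement is missing.
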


A proof is deferred until the end of \S\ref{subsec:duality}.

\begin{eg}[Degree bounds]
  \label{eg:degree.bounds}
  The geometric criteria of Theorem \ref{thm:equalizer} disallows bifurcations in two directions at once, but still allows for such digraphs as trees and hexagonal grids.
  The two leftmost digraphs, unlike the right square grid, satisfy the criteria.
  \begin{center}
  \begin{tikzpicture}[->,>=stealth',shorten >=1pt,auto,node distance=1cm,
                    thick,main node/.style={circle,fill=red!10,draw,font=\Large\bfseries}]
  \node[main node] (0) {};
  \node[main node] (1) [above right of=0] {};
  \coordinate [above of=1] (2);
  \coordinate [above right of=1] (3);
  \coordinate [right of=1] (4);
  \node[main node] (5) [above left of =0] {};
  \coordinate [above of =5] (6);
  \coordinate [above left of =5] (7);
  \coordinate [left of =5] (8);
  \node[main node] (9) [below of =0] {};
  \coordinate [below left of =9] (10);
  \coordinate [below of =9] (11);
  \coordinate [below right of =9] (12);
  \path[every node/.style={font=\small}]
    (0) edge node [right] {} (1)
    (0) edge node [right] {} (5)
    (9) edge node [right] {} (0)
    (1) edge node [right] {} (2)
    (1) edge node [right] {} (3)
    (1) edge node [right] {} (4)
    (5) edge node [right] {} (6)
    (5) edge node [right] {} (7)
    (5) edge node [right] {} (8)
    (10) edge node [right] {} (9)
    (11) edge node [right] {} (9)
    (12) edge node [right] {} (9);
  \coordinate [right of =0] (00);
  \coordinate [right of =00] (000);
  \coordinate [right of =000] (001);
  \node[main node,fill=blue] (13) [right of =001] {};
  \node[main node,fill=blue] (14) [above right of =13] {};
  \node[main node,fill=blue] (15) [right of =14] {};
  \node[main node,fill=blue] (16) [below right of =13] {};
  \node[main node,fill=blue] (17) [right of =16] {};
  \node[main node,fill=blue] (18) [below right of =15] {};
  \node[main node,fill=blue] (19) [below left of =16] {};
  \node[main node,fill=blue] (20) [below right of =19] {};
  \node[main node,fill=blue] (21) [right of =20] {};
  \node[main node,fill=blue] (22) [above right of =21] {};
  \node[main node,fill=blue] (23) [left of =13] {};
  \node[main node,fill=blue] (24) [below left of =23] {};
  \node[main node,fill=blue] (25) [below right of =24] {};
 \path[every node/.style={font=\small}]
    (13) edge node [right] {} (14)
    (14) edge node [right] {} (15)
    (13) edge node [right] {} (16)
    (16) edge node [right] {} (17)
    (17) edge node [right] {} (18)
    (15) edge node [right] {} (18)
    (19) edge node [right] {} (16)
    (19) edge node [right] {} (20)
    (20) edge node [right] {} (21)
    (21) edge node [right] {} (22)
    (17) edge node [right] {} (22)
    (23) edge node [right] {} (13)
    (24) edge node [right] {} (23)
    (24) edge node [right] {} (25)
    (25) edge node [right] {} (19);
  \coordinate [right of =17] (0000);
  \coordinate [right of =0000] (00000);
  \node[main node, fill=yellow] (26) [right of =00000] {};
  \node[main node, fill=yellow] (27) [right of =26] {};
  \node[main node, fill=yellow] (28) [above of =27] {};
  \node[main node, fill=yellow] (29) [above of =26] {};
  \node[main node, fill=yellow] (30) [right of =27] {};
  \node[main node, fill=yellow] (31) [right of =28] {};
  \node[main node, fill=yellow] (32) [below of =26] {};
  \node[main node, fill=yellow] (33) [right of =32] {};
 \path[every node/.style={font=\small}]
    (26) edge node [right] {} (29)
    (26) edge node [right] {} (27)
    (27) edge node [right] {} (28)
    (29) edge node [right] {} (28)
    (27) edge node [right] {} (30)
    (28) edge node [right] {} (31)
    (30) edge node [right] {} (31)
    (32) edge node [right] {} (26)
    (32) edge node [right] {} (33)
    (33) edge node [right] {} (27);
  \end{tikzpicture} 
  \end{center} 
\end{eg}

\begin{cor}
  \label{cor:dicycles}
  The number of simple directed loops in a digraph $X$ is
  $$\dim_{\R}\dihomology{1}(X;\constantsheaf{\N})\otimes_\N\R.$$
\end{cor}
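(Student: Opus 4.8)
The plan is to identify $\dihomology{1}(X;\constantsheaf{\N})$ with an explicit combinatorial monoid, to decompose its elements into simple directed loops, and then to read off the dimension after tensoring with $\R$. First I would apply Theorem~\ref{thm:equalizer} with ground semiring $\N$ and coefficient sheaf $\constantsheaf{\N}$: every stalk $\constantsheaf{\N}(v)=\N$ is a flat $\N$-semimodule, so the equalizer form of that theorem applies and $\dihomology{1}(X;\constantsheaf{\N})$ is the equalizer of
$$\textstyle\bigoplus_{e\in E_X}\dicohomology{0}(\langle e\rangle;\constantsheaf{\N})\;\rightrightarrows\;\bigoplus_{v\in V_X}\constantsheaf{\N}(v).$$
Computing $\dicohomology{0}$ of the one-edge digraph $\langle e\rangle$ directly from Definition~\ref{defn:compact.HH0} gives $\dicohomology{0}(\langle e\rangle;\constantsheaf{\N})\cong\N$, and under this identification each restriction map $\dicohomology{0}(\partial_- e\subset\langle e\rangle)$, $\dicohomology{0}(\partial_+ e\subset\langle e\rangle)$ is the identity of $\N$. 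Hence $\dihomology{1}(X;\constantsheaf{\N})$ is the commutative monoid $F(X)$ of finitely supported functions $\phi\colon E_X\to\N$ obeying the conservation law $\sum_{\partial_- e=v}\phi(e)=\sum_{\partial_+ e=v}\phi(e)$ at each vertex, i.e.\ the monoid of nonnegative integer circulations on $X$.

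Next I would run the classical flow-decomposition argument inside the finite subdigraph carrying a fixed $\phi$: repeatedly extracting a directed cycle along edges of positive value and subtracting its minimum shows that every element of $F(X)$ is a finite $\N$-combination $\sum_L m_L\chi_L$ of characteristic functions of simple directed loops $L$. Thus $F(X)$ is generated as a commutative monoid by $\{\chi_L\}$; since $M\otimes_\N\R$ is the group completion of $M$ tensored over $\Z$ with $\R$, the images of the $\chi_L$ span $\dihomology{1}(X;\constantsheaf{\N})\otimes_\N\R$.

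The main obstacle is the complementary statement, that the $\chi_L$ are $\R$-linearly independent, so that they form a basis. I would attack this geometrically: the set $K\subset\R^{E_X}$ of nonnegative real circulations is a pointed cone; the analysis above shows that each $\chi_L$ spans an extremal ray of $K$ and that every extremal vector of $K$, having support a union of directed cycles admitting no smaller circulation beneath it, is a positive multiple of some $\chi_L$, so that the extremal rays of $K$ are exactly the $\R^{\geqslant 0}\chi_L$, in bijection with the simple directed loops of $X$. As $\dihomology{1}(X;\constantsheaf{\N})\otimes_\N\R$ is the linear span of $K$, and the linear span of a pointed cone is spanned by its extremal rays, the claimed equality amounts to the assertion that $K$ is a \emph{simplicial} cone, its extremal generators being linearly independent. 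This simpliciality is the delicate point, and the one I expect to demand the most care: it is exactly the place where any hypothesis on $X$ must enter, since for a general digraph the circulation cone $K$ can fail to be simplicial. Granting it, $\dim_\R\dihomology{1}(X;\constantsheaf{\N})\otimes_\N\R$ is the number of extremal rays of $K$, namely the number of simple directed loops of $X$.
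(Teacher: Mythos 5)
Your first two steps are sound and are surely what the author intends (the paper offers no proof of this corollary, so there is nothing on its side to compare against): $\N$ is flat over $\N$, so Theorem~\ref{thm:equalizer} identifies $\dihomology{1}(X;\constantsheaf{\N})$ with the monoid of finitely supported $\N$-valued circulations, and the classical flow-decomposition shows this monoid is generated by the indicator vectors $\chi_L$ of simple directed loops, so their images span $\dihomology{1}(X;\constantsheaf{\N})\otimes_\N\R$. The gap you flag at the end --- that the $\chi_L$ need not be $\R$-linearly independent --- is genuine and, more than that, unbridgeable: the corollary is false as printed. Take $X$ with two vertices $u,v$, two edges $a_1,a_2$ from $u$ to $v$ and two edges $b_1,b_2$ from $v$ to $u$. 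There are four simple directed loops $\{a_i,b_j\}$, but $\chi_{\{a_1,b_1\}}+\chi_{\{a_2,b_2\}}=\chi_{\{a_1,b_2\}}+\chi_{\{a_2,b_1\}}$, and the real circulation space $\{\phi\in\R^4\;|\;\phi(a_1)+\phi(a_2)=\phi(b_1)+\phi(b_2)\}$ is $3$-dimensional: the circulation cone is a cone over a square, not a simplicial cone. Hence $\dim_\R\dihomology{1}(X;\constantsheaf{\N})\otimes_\N\R=3$ while the number of simple directed loops is $4$.

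What your argument actually establishes is that $\dim_\R\dihomology{1}(X;\constantsheaf{\N})\otimes_\N\R$ equals the rank of the family $\{\chi_L\}$, equivalently the dimension of the space of real circulations supported on edges lying on directed cycles; this is bounded above by the number of simple directed loops, with equality exactly when the circulation cone is simplicial (for instance when each simple directed loop carries an edge lying on no other simple directed loop). So the correct move is not to search for a cleverer independence argument but to weaken the conclusion to an inequality, or to restrict the class of digraphs; your honest identification of the simpliciality issue locates precisely where both the proof and the statement break down.
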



\begin{prop}[Universal Coefficients]\label{prop:universal.coefficients}
  There exists an isomorphism
  \begin{equation*}
    \dihomology{1}(\digraph{1};\sheaf{1})\tensor{1}\semimodule{1}\cong\dihomology{1}(\digraph{1};\sheaf{1}\tensor{1}\constantsheaf{\semimodule{1}})
  \end{equation*}
  natural in partial $S$-sheaves $\sheaf{1}$ on digraphs $X$ and $\semiring{1}$-semimodules $\semimodule{1}$ that are flat on the left and right.
\end{prop}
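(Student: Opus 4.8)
I read the hypothesis ``flat on the left and right'' to mean that the partial $S$-sheaf $\sheaf{1}$ is (objectwise) flat and the $\semiring{1}$-semimodule $\semimodule{1}$ is flat; this is exactly what the argument below consumes. The plan is to present both sides of the claimed isomorphism as honest equalizers via Theorem \ref{thm:equalizer}, and then to observe that the flat functor $\semimodule{1}\tensor{1}-$ carries the equalizer diagram computing the left side onto the one computing the right side.

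First I would apply Theorem \ref{thm:equalizer} to $\digraph{1}$ and $\sheaf{1}$ itself: since $\sheaf{1}(v)$ is flat for every $v\in\vertices{\digraph{1}}$, the theorem identifies $\dihomology{1}(\digraph{1};\sheaf{1})$ with the equalizer of the natural parallel pair
\[
  \directsum_{e\in\edges{\digraph{1}}}\dicohomology{0}(\langle e\rangle;\sheaf{1})
  \;\rightrightarrows\;
  \directsum_{v\in\vertices{\digraph{1}}}\sheaf{1}(v).
\]
Because $\semimodule{1}$ is flat, the functor $\semimodule{1}\tensor{1}-\colon\CONSTRAINTS\ra\CONSTRAINTS$ preserves equalizers (Lemma \ref{lem:superflatness}) and preserves $\directsum$ (Proposition \ref{prop:tensors}); applying it therefore presents $\dihomology{1}(\digraph{1};\sheaf{1})\tensor{1}\semimodule{1}$ as the equalizer of the pair obtained by tensoring each term and each arrow above with $\semimodule{1}$.

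Next I would rewrite the tensored terms using the ``$\dicohomology{0}$'' case of Lemma \ref{lem:HH0-universal.coefficients}, which applies because $\semimodule{1}$ is flat, applied to the compact subdigraphs $\langle e\rangle$ and to the one-point subdigraphs $v$: this gives natural isomorphisms $\semimodule{1}\tensor{1}\dicohomology{0}(\langle e\rangle;\sheaf{1})\cong\dicohomology{0}(\langle e\rangle;\constantsheaf{\semimodule{1}}\tensor{1}\sheaf{1})$ and $\semimodule{1}\tensor{1}\sheaf{1}(v)\cong(\constantsheaf{\semimodule{1}}\tensor{1}\sheaf{1})(v)$, under which $\dihomology{1}(\digraph{1};\sheaf{1})\tensor{1}\semimodule{1}$ becomes the equalizer of
\[
  \directsum_{e\in\edges{\digraph{1}}}\dicohomology{0}(\langle e\rangle;\constantsheaf{\semimodule{1}}\tensor{1}\sheaf{1})
  \;\rightrightarrows\;
  \directsum_{v\in\vertices{\digraph{1}}}(\constantsheaf{\semimodule{1}}\tensor{1}\sheaf{1})(v),
\]
precisely the parallel pair of Theorem \ref{thm:equalizer} for the coefficient sheaf $\constantsheaf{\semimodule{1}}\tensor{1}\sheaf{1}$. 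That sheaf is again objectwise flat, since each stalk $\semimodule{1}\tensor{1}\sheaf{1}(c)$ is flat: by associativity of $\tensor{1}$ the functor $-\tensor{1}(\semimodule{1}\tensor{1}\sheaf{1}(c))$ is the composite of the equalizer-preserving functors $-\tensor{1}\semimodule{1}$ and $-\tensor{1}\sheaf{1}(c)$. Hence Theorem \ref{thm:equalizer} applies once more and identifies the displayed equalizer with $\dihomology{1}(\digraph{1};\constantsheaf{\semimodule{1}}\tensor{1}\sheaf{1})=\dihomology{1}(\digraph{1};\sheaf{1}\tensor{1}\constantsheaf{\semimodule{1}})$. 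Composing the three identifications yields the asserted isomorphism, and each step is natural in $\sheaf{1}$ and in $\semimodule{1}$.

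The one genuinely delicate point, I expect, is the compatibility of the isomorphisms of Lemma \ref{lem:HH0-universal.coefficients} with the structure maps of the two equalizer diagrams: one must verify not merely that the term objects agree, but that under $\semimodule{1}\tensor{1}\dicohomology{0}(\langle e\rangle;\sheaf{1})\cong\dicohomology{0}(\langle e\rangle;\constantsheaf{\semimodule{1}}\tensor{1}\sheaf{1})$ the tensored face restrictions $\semimodule{1}\tensor{1}\dicohomology{0}(\delminus{.4}e\subset\langle e\rangle)$ and $\semimodule{1}\tensor{1}\dicohomology{0}(\delplus{.4}e\subset\langle e\rangle)$ correspond to the face restrictions $\dicohomology{0}(\delminus{.4}e\subset\langle e\rangle)$ and $\dicohomology{0}(\delplus{.4}e\subset\langle e\rangle)$ for the new coefficients, so that the two parallel pairs, hence the two equalizers, match. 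This reduces to unwinding the construction in Lemma \ref{lem:HH0-universal.coefficients}, where the relevant isomorphism is assembled from $\semimodule{1}\tensor{1}-$ commuting summand-wise with the finite direct sums of stalks and, by flatness, preserving the equalizer inclusions that carve out $\dicohomology{0}$; the face restrictions are themselves built from projections onto and inclusions of those summands, so they are transported correctly.
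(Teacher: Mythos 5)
Your argument is correct under your stated reading of the hypotheses, but it takes a genuinely different route from the paper. The paper's printed proof is a one-line reduction that is garbled as written (it cites Proposition \ref{prop:universal.coefficients} itself), but its evident intent is to pass through Poincar\'{e} duality: apply the top isomorphism of Theorem \ref{thm:pd} with $U=\varnothing$ to rewrite $\dihomology{1}(\digraph{1};\sheaf{1})$ as $\dicohomology{0}(\digraph{1};\orientation{S}\tensor{1}\sheaf{1})$, commute $\semimodule{1}\tensor{1}-$ past $\dicohomology{0}$ using Lemma \ref{lem:HH0-universal.coefficients} (the flat case), and then apply duality again. You instead stay entirely in homology, presenting both sides as the explicit equalizer of Theorem \ref{thm:equalizer} and commuting $\semimodule{1}\tensor{1}-$ past the equalizer and the direct sums term by term; this avoids the orientation sheaf altogether and makes the ``delicate point'' (compatibility with the face restrictions) visible rather than hidden inside the duality isomorphism, which is a virtue. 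The trade-off is hypotheses: your first invocation of Theorem \ref{thm:equalizer} consumes objectwise flatness of $\sheaf{1}$, so your proof only covers the reading in which \emph{both} $\sheaf{1}$ and $\semimodule{1}$ are flat, whereas the duality route needs only flatness of $\semimodule{1}$ (the reading given in the paper's outline, where the proposition is stated for arbitrary partial $S$-sheaves and flat $\semimodule{1}$). If the weaker hypothesis is intended, your argument would need to be supplemented, e.g.\ by resolving $\sheaf{1}$ by flat sheaves as in Proposition \ref{prop:H1} and checking that $\semimodule{1}\tensor{1}-$ preserves the relevant coequalizers, or by falling back on Theorem \ref{thm:pd}. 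One further small caveat, which you share with the paper: the associativity of the mixed action $\tensor{1}$ used to see that $\semimodule{1}\tensor{1}\sheaf{1}(c)$ is again flat is never formally established in the text, though it is implicitly assumed throughout.
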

\begin{proof}
  There exist natural isomorphisms
  \begin{equation*}
    \semimodule{1}\tensor{1}\dihomology{0}(\digraph{1};\orientation{S}\tensor{1}\tensor{1}\sheaf{1})\cong\dihomology{0}(\digraph{1};\orientation{S}\tensor{1}\constantsheaf{M}\tensor{1}\sheaf{1})
  \end{equation*}
  by Proposition \ref{prop:universal.coefficients}.
\end{proof}

\begin{eg}[Necessity of flatness]
  \label{eg:flatness.necessaire}
  Observe that
  $$\dihomology{1}(X;\constantsheaf{\N})\tensor{0}\Z=0\neq\dihomology{1}(X;\constantsheaf{\N}\tensor{0}\constantsheaf{\Z})=\dihomology{1}(X;\constantsheaf{\Z}).$$
  for $\digraph{1}$ a digraph with no dipaths infinite in both directions but at least one undirected path infinite in both directions.
  Hence tensoring with $\Z$, not flat as an $\N$-semimodule, fails to commute with $\dihomology{1}(X;-)$.
\end{eg}

Ordinary sheaf homology is exact.
Directed homology comes equipped with connecting homomorphisms from degree $1$ to degree $0$, although the natural analogue of exactness in the semimodule-theoretic setting fails in general.

\begin{defn}
  Fix open $U\subset X$.
  Let $\connectinghom_-,\connectinghom_+$ denote the $S$-homomorphisms
  $$\connectinghom_-,\connectinghom_+:\dihomology{1}((\digraph{1},U);\sheaf{1})\ra\dihomology{0}(U,\sheaf{1})$$
  defined by each of the two commutative diagrams of the form
  \begin{equation*}
    \xymatrix@C=5pc{
        \dihomology{1}(X,U)
        \ar[d]_{\connectinghom_{\pm}}
        \ar[r]
      & \bigoplus_{e\in E_{\sd(X-U)}}\dicohomology{0}(\langle e\rangle;\sd\sheaf{1})
        \ar[r]
      & \bigoplus_{v\in V_{\partial_\pm(X-U)}}(\sd\sheaf{1})(v)
        \ar[d]
      \\
        \dihomology{0}(U;\sheaf{1})
      & \bigoplus_{v\in V_{\sd U}}(\sd\sheaf{1})(v)
        \ar[l]
      & \bigoplus_{\partial_\mp e\in\partial_\pm(X-U)}(\sd\sheaf{1})(e)
        \ar[l]
    }
  \end{equation*}
  where the left horizontal arrows are universal arrows and the right horizontal arrows are induced by projection and inclusion.
\end{defn}

\begin{prop}
  \label{prop:Ab.connecting.maps}
  Let $S$ be a ring.
  For an $S$-sheaf $\sheaf{1}$ on $\digraph{1}$ and $C\subset\edges{\digraph{1}}$,
  $$\connectinghom_+-\connectinghom_-:\dihomology{1}((\digraph{1},C);\sheaf{1})\ra\dihomology{0}(C;\sheaf{1})$$
  is the ordinary connecting homomorphism for Borel-Moore sheaf homology.
\end{prop}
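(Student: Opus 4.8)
The plan is to mirror the proof of Proposition~\ref{prop:Ab.coconnecting.maps} in its homological form: reduce to a constant coefficient sheaf, produce the relevant exact sequence, and appeal to an Eilenberg--Steenrod uniqueness theorem. Since $S$ is a ring, $\CONSTRAINTS=\COEFFICIENTS$ is the abelian category of $S$-modules [Corollary~\ref{cor:ab.categories}], $\tensor{1}$ is exact, and over a ring $\dihomology{\grading}(\digraph{1};-)$ agrees with a cellular cosheaf (Borel--Moore) homology in the sense of \cite{bredon1997sheaf}. By a projection-formula computation, $\dihomology{1}((\digraph{1},C);\sheaf{1})=\dihomology{1}(\digraph{1};(\digraph{1}-C\subset\digraph{1})_*\constantsheaf{S}\tensor{1}\sheaf{1})$ is the relative term in the Borel--Moore long exact sequence of the pair $(\digraph{1},C)$ (note that $C$ is open in $\digraph{1}$ and $\digraph{1}-C$ closed), and by the continuity of all the theories under filtered colimits of finite subdigraphs it suffices to treat finite $\digraph{1}$.

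First I would settle the case $\sheaf{1}=\constantsheaf{M}$ for an arbitrary $S$-module $M$. Realizing $\dihomology{\grading}(C;\constantsheaf{M})$, $\dihomology{\grading}(\digraph{1};\constantsheaf{M})$ and $\dihomology{\grading}((\digraph{1},C);\constantsheaf{M})$ as homologies of the cellular cosheaf chain complexes on $\sd\digraph{1}$ underlying Definition~\ref{defn:homologie}, the decomposition of $\digraph{1}$ into the open subdigraph $C$ and its closed complement yields a short exact sequence of such chain complexes whose associated long exact sequence is the Borel--Moore long exact sequence of $(\digraph{1},C)$. Inspecting the two commutative squares defining $\connectinghom_-$ and $\connectinghom_+$ --- which are built from the subdivision and the restriction maps $\sheaf{1}(v\facerelation e)$ across the positive and negative boundaries of $\digraph{1}-C$ --- shows that $\connectinghom_+-\connectinghom_-$ realizes the degree-one snake-lemma connecting homomorphism of this short exact sequence. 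In particular the sequence
$$\dihomology{1}(C;\constantsheaf{M})\ra\dihomology{1}(\digraph{1};\constantsheaf{M})\ra\dihomology{1}((\digraph{1},C);\constantsheaf{M})\xra{\connectinghom_+-\connectinghom_-}\dihomology{0}(C;\constantsheaf{M})\ra\dihomology{0}(\digraph{1};\constantsheaf{M}),$$
and indeed the full long exact sequence of the pair, is exact; since over a finite $1$-complex Borel--Moore homology with coefficients in $M$ is, together with its connecting homomorphisms, determined up to natural isomorphism by the Eilenberg--Steenrod axioms, $\connectinghom_+-\connectinghom_-$ coincides with the classical connecting homomorphism.

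I would then handle a general cellular $S$-sheaf $\sheaf{1}$ by naturality. By Lemma~\ref{lem:cogodement} and Proposition~\ref{prop:H1}, $\sheaf{1}$ is the target of a c-sectionwise coequalizer $\sheaf{1}_1\rra\sheaf{1}_0\ra\sheaf{1}$ with $\sheaf{1}_0,\sheaf{1}_1$ direct sums of pushforwards of constant sheaves; each of the five functors above is right exact in its coefficient sheaf by the construction of $\dihomology{\grading}$, and both $\connectinghom_{\pm}$ and the Borel--Moore connecting homomorphism are natural, so the identity for $\sheaf{1}$ reduces to the identity for the $\sheaf{1}_i$, i.e.\ for pushforwards $(C'\subset\digraph{1})_*\constantsheaf{M}$ of constant sheaves, for which the homology localizes to the subdigraph $C'$ and the constant-sheaf case applies.

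The main obstacle is the chain-level step in the second paragraph: matching the synthetic $\connectinghom_{\pm}$ with the snake-lemma boundary, which requires care with the subdivision, the cosheaf extension maps, the possibly missing vertices of $\sd C$, and the fact that --- because $C$ is open and $\digraph{1}-C$ closed --- cosheaf chains of a subcomplex behave oppositely to sheaf cochains, so the sub- and quotient-complex roles are exchanged relative to Proposition~\ref{prop:Ab.coconnecting.maps}. Once the exact sequence and its naturality are in hand, the rest is formal.
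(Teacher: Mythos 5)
Your proposal is correct in outline, but it takes a genuinely different route from the paper. The paper does not argue at the chain level at all: its proof of Proposition \ref{prop:Ab.connecting.maps} consists of invoking Theorem \ref{thm:pd}, whose jointly commuting square identifies the homological $\connectinghom_{\pm}:\dihomology{1}((X,U);\sheaf{1})\ra\dihomology{0}(U;\sheaf{1})$ with the cohomological $\connectinghom_{\pm}$ on $\dicohomology{0}(X-U;\orientation{S}\tensor{1}\sheaf{1})$ via the horizontal duality isomorphisms; the cohomological difference $\connectinghom_+-\connectinghom_-$ is already known to be the classical connecting map by Proposition \ref{prop:Ab.coconnecting.maps}, and classical Poincar\'{e}--Lefschetz duality exchanges that with the Borel--Moore connecting map. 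Since the bottom duality arrow is only an isomorphism under positive-degree hypotheses on the vertices, the paper then appeals to naturality for the general case. You instead redo the Eilenberg--Steenrod argument of Proposition \ref{prop:Ab.coconnecting.maps} directly in homology: short exact sequence of cellular cosheaf chain complexes on $\sd X$, identification of $\connectinghom_+-\connectinghom_-$ with the snake-lemma boundary, and a resolution by pushforwards of constant sheaves for general coefficients. What your route buys is independence from Theorem \ref{thm:pd} and from its degree restrictions, at the cost of the chain-level bookkeeping you rightly flag as the main obstacle (which the paper's route outsources to the already-proven duality square). One phrasing caveat: for $S$ a ring $\dihomology{1}$ is computed as a kernel and is not literally right exact in the coefficient sheaf; what your reduction step actually needs, and what does hold, is that $\dihomology{1}(X;-)$ sends the c-sectionwise coequalizer presentations of Proposition \ref{prop:H1} to coequalizers, which is true by construction (with Theorem \ref{thm:equalizer} guaranteeing consistency with the equalizer description). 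With that adjustment your argument goes through at the same level of rigor as the paper's proof of the cohomological analogue.
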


A proof will follow readily from Theorem \ref{thm:pd} for the case each vertex in $X$ has positive in-degree and positive out-degree and hence from naturality for the general case.

\begin{eg}[Failure of exactness]
  \label{eg:exactness}
  The commutative diagram
  \begin{equation*}
  \xymatrix@C=1pc{
      \dihomology{1}((\digraph{1},U);\sheaf{1})
        \ar@<.7ex>[rr]^-{\connectinghom_-}\ar@<-.7ex>[rr]_-{\connectinghom_+}
    & & \dihomology{0}(U,\sheaf{1})
        \ar[rrrr]^-{\dihomology{0}(U\subset\digraph{1})}
    & & & & \dihomology{0}(X;\sheaf{1})
  }
  \end{equation*}
  is not a coequalizer diagram for $\sheaf{1}=\constantsheaf{\N}$, $X$ the digraph below, and $U=\{v_1,e_1,e_2,v_2\}$.\\
  \begin{center}
  \begin{tikzpicture}[->,>=stealth',shorten >=1pt,auto,node distance=1.5cm,
                    thick,main node/.style={circle,fill=yellow!20,draw,font=\small}]

  \node[main node] (1) {$v_1$};
  \node[main node] (2) [right of=1] {};
  \node[main node] (3) [right of=2] {$v_2$};

  \path[every node/.style={font=\small}]
    (1) edge node [sloped, pos=.5] {$e_1$} (2)
    (3) edge node [sloped, pos=.3] {$e_2$} (2);
  \end{tikzpicture} 
  \end{center} 
\end{eg}

\begin{defn}
  \label{defn:exactness}
  For each partial $S$-sheaf $\sheaf{1}$ on a digraph $X$, $\dihomology{\grading}(-;\sheaf{1})$ is \textit{exact at $U$} if
  \begin{equation*}
    \xymatrix@C=1pc{
      \dihomology{1}(X;\sheaf{1})
        \ar[rrrr]^-{\dicohomology{0}(X-U\subset X)}
    & & & & \dihomology{1}((X,U);\sheaf{1})
        \ar@<.7ex>[rr]^-{\connectinghom_-}\ar@<-.7ex>[rr]_-{\connectinghom_+}
    & & \dihomology{0}(U;\sheaf{1})
  }
  \end{equation*}
  commutes and the image of the left arrow coequalizes the parallel arrows, for each $C\subset X$.
\end{defn}

For $S$ a ground ring, $\dihomology{\grading}(-;\sheaf{1})$ is exact.  
The following lemma gives another criterion for exactness.

\begin{lem}
  \label{lem:exactness}
  For $S$-sheaf $\sheaf{1}$ on a digraph $X$ and open $U\subset X$,
  \begin{equation*}
  \xymatrix@C=1pc{
      \dihomology{1}(X;\sheaf{1})
        \ar[rrrr]^-{\dihomology{1}((X,\varnothing)\subset(X,U))}
    & & & & \dihomology{1}((X,U);\sheaf{1})
        \ar@<.7ex>[rr]^-{\connectinghom_-}\ar@<-.7ex>[rr]_-{\connectinghom_+}
    & & \dihomology{0}(U;\sheaf{1})
  }
  \end{equation*}
  commutes for $X-U,U$ acyclic and furthermore is exact if $\sheaf{1}$ is naturally inf-semilattice ordered and the images of restriction maps between cells of $\sheaf{1}$ are down-sets with respect to the natural preorders.
\end{lem}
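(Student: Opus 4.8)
I would prove the two assertions in turn.

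\emph{Commutativity (needs only $X-U$ and $U$ acyclic).} Both composites $\connectinghom_\pm\circ\dihomology{1}((X,\varnothing)\subset(X,U))$ are natural in $\sheaf{1}$, so by Lemma~\ref{lem:cogodement} together with the existence of enough projectives in $\CONSTRAINTS$ I may choose a c-sectionwise surjection $\sheaf{1}_0\ra\sheaf{1}$ with $\sheaf{1}_0$ a direct sum of pushforwards of constant partial $S$-sheaves at projective partial $S$-semimodules; by (the proof of) Proposition~\ref{prop:H1} the induced map $\dihomology{1}(X;\sheaf{1}_0)\ra\dihomology{1}(X;\sheaf{1})$ is onto, so it suffices to check $\connectinghom_-\circ r=\connectinghom_+\circ r$ for $\sheaf{1}_0$. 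Since $(X-U\subset X)_*\constantsheaf{S}\tensor{1}\sheaf{1}_0$ is again such a sheaf (tensoring with $(X-U\subset X)_*\constantsheaf{S}$ merely restricts each pushforward to $C\cap(X-U)$), the semimodules $\dihomology{1}(X;\sheaf{1}_0)$, $\dihomology{1}((X,U);\sheaf{1}_0)$ and $\dihomology{0}(U;\sheaf{1}_0)$ are all computed by the (co)equalizer diagrams of Definition~\ref{defn:homologie}, so a class of $\dihomology{1}(X;\sheaf{1}_0)$ is a tuple of flow-values $(\sigma_\epsilon)_{\epsilon\in E_{\sd X}}$ obeying conservation at every vertex of $\sd X$. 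Unwinding the definition of $\connectinghom_\pm$ identifies $\connectinghom_\pm(r(\sigma))$ with the class in $\dihomology{0}(U;\sheaf{1}_0)$ of $\sum_{v\in V_{\partial_\pm(X-U)}}\sum_{\partial_\mp e=v}(\sd\sheaf{1}_0)(v\facerelation e)(\pi_v\sigma)$, i.e. the value of the flow at each boundary vertex $v$ of $X-U$ pushed along the edges $e$ of $\sd U$ meeting $v$. Now conservation of $\sigma$ at the vertices of $\sd(X-U)$ is exactly the statement that at each such vertex the total value entering equals the total value leaving (across all incident cells, those in $U$ included); because $X-U$ is acyclic I enumerate $V_{X-U}$ in topological order and telescope these equalities, concluding that the two boundary-flux chains above differ, inside $\bigoplus_{v\in V_{\sd U}}(\sd\sheaf{1}_0)(v)$, only by an element of the image of $\bigoplus_{\epsilon\in E_{\sd U}}\dicohomology{0}(\langle\epsilon\rangle;\sd\sheaf{1}_0)$ --- that is, by a parallel-transport relation in the acyclic digraph $U$. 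Hence the two chains represent the same class in the coequalizer $\dihomology{0}(U;\sheaf{1}_0)$, which is the desired commutativity.

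\emph{Exactness.} Commutativity places $\im\bigl(\dihomology{1}(X;\sheaf{1})\ra\dihomology{1}((X,U);\sheaf{1})\bigr)$ inside the equalizer of the pair $(\connectinghom_-,\connectinghom_+)$; the remaining content is the reverse inclusion, namely that every relative class $w$ with $\connectinghom_-(w)=\connectinghom_+(w)$ lifts to an absolute class. Using Proposition~\ref{prop:flows} I represent $w$ by a finite, locally decomposable flow for $(X-U\subset X)_*\constantsheaf{S}\tensor{1}\sheaf{1}$; this flow is carried by $X-U$ and, by the description above, ``leaves'' $X-U$ through $\partial_-(X-U)$ and $\partial_+(X-U)$ with exactly the data recorded by $\connectinghom_-(w)$ and $\connectinghom_+(w)$. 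The hypothesis $\connectinghom_-(w)=\connectinghom_+(w)$ says these two exit-data become identified in the coequalizer $\dihomology{0}(U;\sheaf{1})$, hence are linked by a zig-zag of parallel-transport relations through $\bigoplus_{\epsilon\in E_{\sd U}}\dicohomology{0}(\langle\epsilon\rangle;\sd\sheaf{1})$. I would then upgrade this zig-zag to an honest conservative $\sheaf{1}$-flow on $U$ whose values on the edges of $\sd U$ meeting $\partial_\pm(X-U)$ are precisely the exit-data of $w$; adjoining it to $w$ produces a conservative $\sheaf{1}$-flow on all of $X$, i.e. a class of $\dihomology{1}(X;\sheaf{1})$ restricting to $w$. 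The construction runs along $U$ in topological order --- legitimate since $U$ is acyclic --- and at each vertex of $\sd U$ it splits the incoming value, using the greatest lower bound $x\wedge y$ and the compatibility of $\wedge$ with $+$ built into ``naturally inf-semilattice ordered'', into the part that continues downstream and the part that terminates there; the hypothesis that the images of the restriction maps of $\sheaf{1}$ are down-sets for the natural preorders guarantees that each piece actually lies in the stalk it is assigned to, so that the sums and scalar multiples needed to assemble the flow are defined in the partial $S$-semimodules involved.

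The main obstacle is this last lift: converting the formal identity $\connectinghom_-(w)=\connectinghom_+(w)$ in the quotient $\dihomology{0}(U;\sheaf{1})$ into an explicit, stalkwise-admissible system of flow-values on $\sd U$. Without cancellation one cannot in general collapse a zig-zag of transport relations into a single flow, and it is precisely the inf-semilattice structure (with its compatibility with addition) together with the down-set condition on restriction maps that make the ``meet-and-split'' routing well defined and its outputs legal elements of the stalks; tracing the coequalizer congruence of $\dihomology{0}(U;\sheaf{1})$ back through the partial operations is the delicate part. The commutativity half, by contrast, is a routine conservation-and-telescoping computation over the acyclic digraph $X-U$ once $\connectinghom_\pm$ has been identified with the boundary-flux maps above.
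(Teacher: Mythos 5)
Your proposal follows essentially the same route as the paper, whose own argument for this lemma is only a two-line sketch: reduce to $X$ compact, observe that acyclicity makes $U$ a finite poset under the reachability order, and induct on the length of a maximal chain --- which is exactly your topological-order processing of $U$. The ``delicate step'' you flag (upgrading the coequalizer identity $\connectinghom_-(w)=\connectinghom_+(w)$ to an honest $\sheaf{1}$-flow on $U$ by meet-and-split routing, using the inf-semilattice structure and the down-set hypothesis) is precisely what that induction is meant to carry, and your account of it is if anything more explicit than the paper's.
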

\begin{proof}[proof sketch]
  It suffices to consider the case $X$ compact.
  The preorder $\leqslant_{U}$ on $U$ generated by the relations $\delminus{.4} e\leqslant_U\delplus{.4}e$ for each $e\in E_U$ makes $U$ a finite poset by acyclicity of $U$.
  The lemma follows by an inductive argument on the size of a maximal chain in $U$.
\end{proof}

\begin{eg}[Non-cannonicity of connecting maps]
  \label{eg:non-cannonicity}
  The diagram
  \begin{equation*}
  \xymatrix@C=2pc{
      \dihomology{1}((X,A);\sheaf{1})
        \ar[rrr]^-{\dihomology{1}((X,A)\subset(X,B);\sheaf{1})}
        \ar[d]_{\connectinghom}
    & & & \dihomology{1}((X,B);\sheaf{1})
        \ar[d]^{\connectinghom}
        \\
      \dihomology{0}(A;\sheaf{1})
        \ar[rrr]_-{\dihomology{0}(A\subset B;\sheaf{1})}
    & & & \dicohomology{0}(B;\sheaf{1}),
  }
  \end{equation*}
  need not commute for $\connectinghom=\connectinghom_-$ or $\connectinghom=\connectinghom_+$.
  For example, consider the case $X$ the digraph
  \begin{center}
  \begin{tikzpicture}[->,>=stealth',shorten >=1pt,auto,node distance=1.5cm,
                    thick,main node/.style={circle,fill=yellow!20,draw,font=\small}]

  \node[main node] (1) {};
  \node[main node] (2) [right of=1] {$v$};
  \node[main node] (3) [right of=2] {};

  \path[every node/.style={font=\small}]
    (1) edge node [sloped] {} (2)
    (2) edge node [sloped] {} (3);
  \end{tikzpicture},
  \end{center} 
  $A=X-v$, and $B=X$.
  Then $\dihomology{1}((X,A);\constantsheaf{\Z})=\Z$, $\dihomology{1}((X,B);\constantsheaf{\Z})=0$, $\dihomology{0}(A;\constantsheaf{\Z})=\Z\oplus\Z$, $\dihomology{0}(B;\constantsheaf{\Z})=\Z$, the left vertical map is an injection into one of the summands for $\partial=\delminus{.4} ,\delplus{.4}$, the bottom horizontal map is an isomorphism on each summand, but the right vertical map is the zero map.
\end{eg}

\begin{defn}
  \label{defn:orientations}
  Let $\orientation{S}$ denote the $S$-sheaf on a given digraph $X$ naturally defined by
  $$\orientation{S}(x)=\dihomology{1}((X,X-x);\constantsheaf{S}).$$
\end{defn}

The remainder of the section focuses on technical properties of orientation sheaves.
The sheaf $\orientation{S}$ is an \textit{orientation sheaf} of a weak homology 1-manifold \cite{bredon1997sheaf} in the following sense for $S$ a ring by the Universal Coefficients Theorem for homology.

\begin{lem}
  \label{lem:classical.orientations}
  Suppose $S$ is a ring.
  There exists an isomorphism
  $$\orientation{S}(c)\tensor{1}\semimodule{1}\cong\dihomology{1}((X,X-c);\semimodule{1})$$
  natural in cells $c$ in a given digraph and $S$-modules $\semimodule{1}$, where $\dihomology{\grading}$ denotes ordinary simplicial homology.
\end{lem}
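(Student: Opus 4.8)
The plan is to descend to the classical setting over the ground ring and invoke the ordinary Universal Coefficients Theorem, the only substantive point being that the relevant zeroth homology is flat, so that the $\mathrm{Tor}$-term disappears. By Corollary~\ref{cor:ab.categories}, over the ring $S$ the category $\CONSTRAINTS$ is the category of $S$-modules. The orientation sheaf is, by Definition~\ref{defn:orientations} read together with subdivision invariance (Proposition~\ref{prop:sd-homologie}), computed on $\sd X$, in which every cell $c$ of $X$ is a vertex and hence $X-c$ is open and Definition~\ref{defn:relative.H1} applies; I therefore assume throughout that $c$ is a vertex. Unwinding Definitions~\ref{defn:orientations} and~\ref{defn:relative.H1}, and using that $(\{c\}\subset X)_*$ and $-\tensor{1}\constantsheaf{\semimodule{1}}$ are computed stalkwise while $S\tensor{1}\semimodule{1}\cong\semimodule{1}$ and $0\tensor{1}\semimodule{1}=0$, one gets $\orientation{S}(c)=\dihomology{1}(X;\mathcal{S})$ and $\dihomology{1}((X,X-c);\constantsheaf{\semimodule{1}})=\dihomology{1}(X;\mathcal{S}\tensor{1}\constantsheaf{\semimodule{1}})$, where $\mathcal{S}=(\{c\}\subset X)_*\constantsheaf{S}$ is the skyscraper $S$-sheaf at $c$. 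Over the ring $S$ the functors $\dihomology{\grading}$ agree with ordinary simplicial (Borel--Moore) homology (Proposition~\ref{prop:classical.cohomologie}, Proposition~\ref{prop:Ab.connecting.maps}, \cite{bredon1997sheaf}), so these are the simplicial homology of the pair $(X,X-c)$ with coefficients in $S$ and in $\semimodule{1}$ respectively.

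With this in hand I would invoke the classical Universal Coefficients Theorem for the pair $(X,X-c)$, which supplies a short exact sequence
\begin{equation*}
0\longrightarrow\orientation{S}(c)\tensor{1}\semimodule{1}\longrightarrow\dihomology{1}((X,X-c);\semimodule{1})\longrightarrow\mathrm{Tor}_{1}^{S}\!\big(\dihomology{0}((X,X-c);\constantsheaf{S}),\,\semimodule{1}\big)\longrightarrow 0,
\end{equation*}
natural in the $S$-module $\semimodule{1}$, whose left-hand arrow is the canonical comparison morphism (here $\orientation{S}(c)=\dihomology{1}((X,X-c);\constantsheaf{S})$). It remains to see that the $\mathrm{Tor}$-term vanishes, that is, that $\dihomology{0}((X,X-c);\constantsheaf{S})$ is flat; I would compute it from the long exact sequence of the pair, $\dihomology{0}((X,X-c);\constantsheaf{S})=\cokernel\big(\dihomology{0}(X-c;\constantsheaf{S})\to\dihomology{0}(X;\constantsheaf{S})\big)$. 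If $c$ is incident to an edge, then the connected component of $c$ is still represented in $X-c$ (that edge and its other endpoint survive), so the cokernel is $0$; if $c$ is isolated, the cokernel is the free module $S$ on the class of $c$. Either way $\dihomology{0}((X,X-c);\constantsheaf{S})$ is free, hence flat, hence annihilates the $\mathrm{Tor}$-term, and the left-hand arrow is an isomorphism; since $\orientation{S}$, $c\mapsto\dihomology{1}((X,X-c);\semimodule{1})$, and the comparison morphism are all sheaves in the variable $c$, this isomorphism is moreover natural in $c$, which is the assertion. (Alternatively, since $S$ is a ring, Theorem~\ref{thm:equalizer} already presents $\dihomology{1}(X;\mathcal{S})$ as the kernel of a two-term complex of finite free $S$-modules with the signed incidence map at $c$ as boundary; this makes both the freeness of $\dihomology{0}(X;\mathcal{S})$ and the commutation of $-\tensor{1}\semimodule{1}$ with the construction transparent, again reducing to the classical Universal Coefficients Theorem for free complexes.)

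I expect the main obstacle to lie in the first paragraph: one must be careful that $X-c$ is an admissible relative-homology argument only after subdividing (so that $c$ is a vertex), and one must pin down the identification of the sheaf-theoretic object $\dihomology{1}(X;\mathcal{S}\tensor{1}\constantsheaf{\semimodule{1}})$ with ordinary relative simplicial homology of $(X,X-c)$ in $\semimodule{1}$-coefficients, together with the fact that the comparison morphism above is the natural one (so that naturality genuinely holds in $c$, not merely in $\semimodule{1}$). After that the argument is a one-line long-exact-sequence computation and a citation.
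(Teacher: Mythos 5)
The paper offers no written proof of this lemma beyond the remark immediately preceding it that it holds ``by the Universal Coefficients Theorem for homology,'' and your argument is a correct elaboration of exactly that citation: reduce to the classical setting over the ring $S$ via Corollary \ref{cor:ab.categories} and subdivision, apply the universal coefficients sequence to the local relative chain complex, and kill the $\mathrm{Tor}$ term by checking that $\dihomology{0}((X,X-c);\constantsheaf{S})$ (and the boundary module, so that the sequence is valid over an arbitrary ring and not merely a PID) is free. Your approach is essentially the paper's, with the details the paper omits filled in correctly.
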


The remainder of the section concerns technical observations of the orientation sheaf.  

\begin{lem}
  \label{lem:primal-orientations}
  Fix $v\in\vertices{\digraph{1}}$.
  The elements in
  $$(\delminus{.4} ^{-1}(v)\cap\delplus{.4}^{-1}(v))\cup\{e_-+e_+\;|\;e_-\in\partial^{-1}_-(v)-\partial^{-1}_+(v),\;e_+\in\partial^{-1}_+(v)-\partial^{-1}_-(v)\}.$$
  individually generate minimal $\N$-subsemimodules of $\orientation{\semiring{0}}$ and collectively generate all of $\orientation{\semiring{0}}(v)$.
\end{lem}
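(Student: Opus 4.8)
The plan is to first reduce $\orientation{\N}(v)=\dihomology{1}((\digraph{1},\digraph{1}-v);\constantsheaf{\N})$ to an explicit submonoid of a free commutative monoid, then argue combinatorially. Unwinding Definition \ref{defn:relative.H1} with the open set $U=\digraph{1}-v$ (open because edges dominate their endpoints in the face poset) gives $\orientation{\N}(v)=\dihomology{1}(\digraph{1};\mathcal{E})$, where $\mathcal{E}=(\{v\}\subset\digraph{1})_*\constantsheaf{\N}\tensor{1}\constantsheaf{\N}=(\{v\}\subset\digraph{1})_*\constantsheaf{\N}$ is the pushforward sheaf with stalk $\N$ at $v$ and stalk $0$ at every other cell, all of whose restriction maps vanish. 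Since $\N$ is projective, $\mathcal{E}$ lies in the class of sheaves for which $\dihomology{1}$ is defined directly; and since every stalk of $\mathcal{E}$ is $0$ or $\N$, both of which are flat $\N$-semimodules, Theorem \ref{thm:equalizer} applies through its flatness clause (even though $\N$ is not a ring) and identifies $\orientation{\N}(v)$ with the equalizer of
$$\directsum_{e\in\edges{\digraph{1}}}\dicohomology{0}(\langle e\rangle;\mathcal{E})\;\rra\;\directsum_{w\in\vertices{\digraph{1}}}\mathcal{E}(w);$$
in particular no subdivision is needed.

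Next I would compute the two terms and the two maps. Only edges $e$ having $v$ as an endpoint contribute to the left sum: for such $e$, the restriction of $\mathcal{E}$ to $\langle e\rangle$ is $\N$ at $v$ and $0$ on the edge and on the other endpoint (if present), so $\ncdicohomology{0}(\langle e\rangle;\mathcal{E})$ is the equalizer of $\N\rra 0$, i.e.\ $\dicohomology{0}(\langle e\rangle;\mathcal{E})\cong\N$ with canonical generator $\bar e$ corresponding to $1\in\N=\mathcal{E}(v)$. The right sum is $\mathcal{E}(v)=\N$. On the $e$-summand, $\partial_-$ is the restriction to the source vertex, hence sends $\bar e$ to $1$ when $\delminus{.4} e=v$ and to $0$ otherwise (including when $\delminus{.4} e$ is undefined), and $\partial_+$ does likewise with $\delplus{.4} e$. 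Writing $L=\delminus{.4}^{-1}(v)\cap\delplus{.4}^{-1}(v)$ (the self-loops at $v$), $S=\delminus{.4}^{-1}(v)-\delplus{.4}^{-1}(v)$, and $T=\delplus{.4}^{-1}(v)-\delminus{.4}^{-1}(v)$, the equalizer condition $\sum_{\delminus{.4} e=v}n_e=\sum_{\delplus{.4} e=v}n_e$ loses its $L$-terms and becomes
$$\orientation{\N}(v)\;\cong\;\Big\{\,(n_e)_{e\in L\sqcup S\sqcup T}\in\N^{(L\sqcup S\sqcup T)}\;:\;\textstyle\sum_{e\in S}n_e=\sum_{e\in T}n_e\,\Big\},$$
under which an edge $e\in L$, regarded as an element of $\orientation{\N}(v)$ as in the lemma, becomes the standard generator $\bar e$, while $e_-+e_+$ for $e_-\in S$, $e_+\in T$ becomes $\bar e_-+\bar e_+$. (Degenerate cases: if $v$ is isolated then $\orientation{\N}(v)=0$; if $S=\varnothing$ or $T=\varnothing$ the constraint forces the coordinates indexed by whichever of $S,T$ is nonempty to vanish, and correspondingly there are no elements of the second kind.)

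It then remains to check, for this concrete monoid, that $L\cup\{\bar e_-+\bar e_+:e_-\in S,\ e_+\in T\}$ is a generating set each of whose members is irreducible --- equivalently, each generates a minimal nonzero subsemimodule, i.e.\ a minimal face of this affine monoid. Every $\bar e$ with $e\in L$ lies in $\orientation{\N}(v)$ (its constraint reads $0=0$), as does every $\bar e_-+\bar e_+$ (constraint $1=1$). For generation, given $(n_e)\in\orientation{\N}(v)$ with $N:=\sum_{e\in S}n_e=\sum_{e\in T}n_e$, split off the self-loop part $\sum_{e\in L}n_e\,\bar e$; the remaining $S$- and $T$-coordinates form two multisets of size $N$, so any bijection between them exhibits $\sum_{e\in S}n_e\bar e+\sum_{e\in T}n_e\bar e$ as a sum of $N$ terms of the form $\bar e_-+\bar e_+$. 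For irreducibility, any decomposition of the standard generator $\bar e$ inside $\N^{(L\sqcup S\sqcup T)}$ is the trivial one $\{0,\bar e\}$; and in a decomposition $\bar e_-+\bar e_+=\alpha+\beta$ the summands are supported on $\{e_-,e_+\}$ with the two coordinates there summing to $1$, so $\{\alpha,\beta\}$ is either $\{0,\bar e_-+\bar e_+\}$ or $\{\bar e_-,\bar e_+\}$, and the latter is impossible since $\bar e_-$ and $\bar e_+$ violate the defining equation ($1\neq 0$) and hence are not in $\orientation{\N}(v)$.

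The main obstacle is the homological bookkeeping of the first two steps: translating Definition \ref{defn:relative.H1} into the pushforward sheaf $\mathcal{E}$, verifying the flatness hypothesis of Theorem \ref{thm:equalizer} so as to avoid passing to $\sd\digraph{1}$, and pinning down the two boundary maps --- in particular handling self-loops at $v$ (where $\langle e\rangle$ has $v$ as its only vertex) and edges with an undefined source or target. Once the explicit description of $\orientation{\N}(v)$ is in place, the remainder (the transportation-style decomposition for generation and the two-coordinate case check for irreducibility) is routine.
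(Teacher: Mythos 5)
Your proposal is correct and its core argument is the same as the paper's: partition the edges at $v$ into self-loops, proper in-edges, and proper out-edges, use the equal-sum constraint to pair in-edges with out-edges via a bijection for generation, and check indecomposability by noting that $\bar e_-$ and $\bar e_+$ individually violate the defining equation. The only difference is presentational: you derive the explicit description of $\orientation{\N}(v)$ as $\{(n_e):\sum_{e\in S}n_e=\sum_{e\in T}n_e\}$ from Definitions \ref{defn:relative.H1} and \ref{defn:orientations} via Theorem \ref{thm:equalizer}, whereas the paper takes that unwinding as immediate and starts directly from the combinatorics.
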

\begin{proof}
  Let $E_v,E^-_v,E^+_v$ be the sets
  $$E_v=\delminus{.4} ^{-1}v\cap\partial^{-1}_+v,\quad E^-_v=\partial^{-1}_-v- E_v,\quad E^+_v=\partial^{-1}_+v- E_v.$$

  Each $e\in E_v$, \indecomposable{} as an element in $\orientation{\semiring{0}}(v)$ by $e$ \indecomposable{} as an element in $\semiring{0}[E_G]$, lies in $\orientation{\semiring{0}}(v)$ because the parallel arrows both send $e$ to $1$.
  
  Consider $e_-\in E_v^-$ and $e_+\in E_v^+$.
  Then $e_-+e_+\in\orientation{\semiring{0}}(v)$ because both parallel arrows send $e_-+e_+$ to $1+0=0+1=1$.  
  Moreover, $e_-+e_+$ is \indecomposable{} because $e_-,e_+\notin\orientation{\semiring{0}}(v)$ by $e_-,e_+\notin E_v$.

  Every element in $\orientation{\semiring{0}}(v)$ factors as a sum of the form
  \begin{equation}
    \label{eqn:generating.orientations}
    \sum_{i\in\mathcal{I}}e_i+\sum_{i\in\mathcal{J}}e_j,\quad e_i\in E_v,\;i\in\mathcal{I}\quad e_j\in E^-_v\cup E^+_v,\;j\in\mathcal{J}.
  \end{equation}
  for some indexing sets $\mathcal{I},\mathcal{J}$.
  The first sum in (\ref{eqn:generating.orientations}) is generated by $E_v$.
  Moreover,
  $$\#\mathcal{I}+\#\{j\in\mathcal{J}\;|\;e_j\in E^-_v\}=\delminus{.4} (z)=\delplus{.4}(z)=\#\mathcal{I}+\#\{j\in\mathcal{J}\;|\;e_j\in E^+_v\},$$
  hence $\#\{j\in\mathcal{J}\;|\;e_j\in E^-_v\}=\#\{j\in\mathcal{J}\;|\;e_j\in E_v^+\}$, hence $\mathcal{J}$ is the disjoint union of bijective subsets $\mathcal{J}_-,\mathcal{J}_+$ such that $e_j\in E^-_v$ if $j\in\mathcal{J}_-$ and $e_j\in E^+_v$ if $j\in\mathcal{J}_+$.
  For any choice of bijection $\tau:\mathcal{J}_-\cong\mathcal{J}_+$, the second sum in (\ref{eqn:generating.orientations}) is generated by elements of the form $e_{j}+e_{\tau(j)}$ for $j\in\mathcal{I}_-$.
\end{proof}

\begin{lem}
  \label{lem:generating-orientations}
  Fix $v\in\vertices{\digraph{1}}$.
  Then 
  \begin{equation}
    \label{eqn:generating-orientations}
    (\delminus{.4} ^{-1}(v)\cap\delplus{.4}^{-1}(v))\cup\{e_-+e_+\;|\;e_-\in\partial^{-1}_-(v)-\partial^{-1}_+(v),\;e_+\in\partial^{-1}_+(v)-\partial^{-1}_-(v)\}.
  \end{equation}
  freely generates $\orientation{S}(v)$ if $v$ has in-degree or out-degree $1$.
\end{lem}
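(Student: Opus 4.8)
\emph{Proof proposal.} The plan is to replace $\orientation{S}(v)$ by an explicit $S$-subsemimodule of a free $S$-semimodule and then, using the degree hypothesis on $v$, write down an isomorphism of it onto a free module carrying (\ref{eqn:generating-orientations}) onto a basis.

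First I would make $\orientation{S}(v)$ concrete. By Definitions \ref{defn:orientations} and \ref{defn:relative.H1} one has $\orientation{S}(v)=\dihomology{1}\bigl(X;(\{v\}\subset X)_*\constantsheaf{S}\bigr)$, and each stalk of the sheaf $(\{v\}\subset X)_*\constantsheaf{S}$ equals $S$ or $0$, hence is flat; so Theorem \ref{thm:equalizer} applies and presents $\orientation{S}(v)$ as the equalizer of
\[
\directsum_{e\in\edges{X}}\dicohomology{0}\bigl(\langle e\rangle;(\{v\}\subset X)_*\constantsheaf{S}\bigr)\rra\directsum_{w\in\vertices{X}}\bigl((\{v\}\subset X)_*\constantsheaf{S}\bigr)(w).
\]
On the right only the $w=v$ summand survives, a copy of $S$; on the left only the summands indexed by edges incident to $v$ survive, each a copy of $S$. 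Carrying out the same bookkeeping as in the proof of Lemma \ref{lem:primal-orientations} --- writing $E_v$, $E^-_v$, $E^+_v$ for the sets of self-loops at $v$, of edges with source but not target $v$, and of edges with target but not source $v$ --- this identifies $\orientation{S}(v)$ with the $S$-subsemimodule
\[
\Bigl\{\,z\in S[E_v]\oplus S[E^-_v]\oplus S[E^+_v]\ :\ \textstyle\sum_{e\in E_v}z_e+\sum_{e\in E^-_v}z_e\;=\;\sum_{e\in E_v}z_e+\sum_{e\in E^+_v}z_e\,\Bigr\},
\]
and carries (\ref{eqn:generating-orientations}) onto $\{\,e:e\in E_v\,\}\cup\{\,e_-+e_+:e_-\in E^-_v,\ e_+\in E^+_v\,\}$, where $e$ now denotes the corresponding standard basis vector.

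Next I would use the edge-reversing involution of $X$, which fixes $E_v$ and interchanges $E^-_v$ with $E^+_v$, hence fixes both $\orientation{S}(v)$ and (\ref{eqn:generating-orientations}); this lets me assume the out-degree $|E_v|+|E^+_v|$ of $v$ equals $1$. If $v$ carries no self-loop, then $E_v=\varnothing$ and $E^+_v=\{f\}$ for a single edge $f$, the defining equation becomes $\sum_{e\in E^-_v}z_e=z_f$, and the coordinate projection $z\mapsto(z_e)_{e\in E^-_v}$ is an isomorphism $\orientation{S}(v)\xrightarrow{\ \cong\ }S[E^-_v]$ --- its inverse filling in the forced value $z_f:=\sum_e z_e$ --- which sends each generator $e+f$ to the basis vector $e$; thus (\ref{eqn:generating-orientations}) is a basis. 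The remaining case, in which $v$ is incident to a self-loop (out-degree $1$ then forcing $E_v=\{g\}$, $E^+_v=\varnothing$, $E^-_v=\delminus{.4}^{-1}(v)-\{g\}$, so that (\ref{eqn:generating-orientations}) is just $\{g\}$), is handled by the same coordinate-projection argument applied to the equation $\sum_{e\in E^-_v}z_e=0$.

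The step needing real care is the passage to this combinatorial model: verifying that Theorem \ref{thm:equalizer} (equivalently, the computation inside the proof of Lemma \ref{lem:primal-orientations}) genuinely computes the stalk of $\orientation{S}$ and, in particular, that self-loops at $v$ are accounted for as intended. Everything downstream of the model is routine manipulation of a single $S$-linear equation.
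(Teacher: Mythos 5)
Your overall strategy --- realize $\orientation{S}(v)$ as the sub-$S$-semimodule of a free $S$-semimodule on the edges at $v$ cut out by the single relation $\sum_{\delminus{.4}e=v}z_e=\sum_{\delplus{.4}e=v}z_e$, reduce by edge-reversal symmetry to one-sided degree $1$, and exhibit the claimed generators as the preimage of a standard basis under a coordinate projection --- is the same as the paper's, and your treatment of the main (no self-loop) case is correct and more explicit than the paper's one-line computation. However, invoking Theorem \ref{thm:equalizer} to obtain the combinatorial model is circular within this paper: the proof of Theorem \ref{thm:equalizer} in \S\ref{subsec:duality} rests on Lemma \ref{lem:pre.pd}, which rests on Lemma \ref{lem:tensored.orientations}, which cites Lemma \ref{lem:generating-orientations} itself. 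The non-circular route is to compute $\dihomology{1}(X;(\{v\}\subset X)_*\constantsheaf{S})$ directly from Definition \ref{defn:homologie} over $\sd X$: for an edge $e$ meeting $v$ in exactly one endpoint, exactly one of $\dicohomology{0}(\langle e_-\rangle;\sd\sheaf{1})$, $\dicohomology{0}(\langle e_+\rangle;\sd\sheaf{1})$ is nonzero, and the only nontrivial equalizer condition sits at the vertex $v\in V_{\sd X}$, so the same equation results and the rest of your argument goes through unchanged when $v$ carries no self-loop.

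Your self-loop case contains a genuine error: from $z_g+\sum_{e\in E^-_v}z_e=z_g$ you cannot conclude $\sum_{e\in E^-_v}z_e=0$, because addition in a general semiring is not cancellative. Over the Boolean semiring $\Lambda$ of Example \ref{eg:flatness}, with a self-loop $g$ at $v$ and one further edge $e$ with $\delminus{.4}e=v$, the solution set of $z_g+z_e=z_g$ is $\{(0,0),(1,0),(1,1)\}$, which is strictly larger than $\Lambda[g]=\{(0,0),(1,0)\}$; so the asserted free generation actually fails in this corner case unless $S$ is additively cancellative. You are in good company --- the paper's own proof writes the relation as $\lambda_{e_-}=\sum_{e_+}\lambda_{e_+}$ and then reparametrizes by $\sum_{e_+}\lambda_{e_+}(e_-+e_+)$, which silently assumes the unique in-edge is not also an out-edge --- but rather than asserting the cancellation, you should either exclude self-loops at $v$, or note that the claim in their presence requires $S$ cancellative (or a ring).
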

\begin{proof}
  Let $e_+$ denote an element in $\delplus{.4}^{-1}(v)$ and $e$ denote an element of the form $e_+$ or $e_-$.

  It suffices to consider the case $v$ has in-degree $1$, the case $v$ has out-degree $1$ symmetrically following.
  Then there exists a unique $e_-\in\delminus{.4} ^{-1}(v)$.
  The map $(\delminus{.4} )_v:\chainsheaf{1}(v)\ra\chainsheaf{0}(v)$ is the isomorphism $\semiring{1}[e_-]\cong\semiring{1}[v]$ sending $e_-$ to $v$.
  Hence
  \begin{equation*}
    \orientation{S}(v)=\left\{\sum_e\lambda_ee\;|\;\lambda_e\in\semiring{1},\;\lambda_{e_-}=\sum_{e_+}\lambda_{e_+}\right\}=\left\{\sum_{e_+}\lambda_{e_+}(e_-+e_+)\;|\;\lambda_{e_+}\in\semiring{1}\right\}=\semiring{1}[X]
  \end{equation*}
  for $X$ the set (\ref{eqn:generating-orientations}).
\end{proof}

\begin{lem}
  \label{lem:tensored.orientations}
  Fix $v\in\vertices{\digraph{1}}$.
  The diagram
  \begin{equation*}
    \xymatrix@C=3pc{
      \orientation{S}\tensor{1}\semimodule{1}\ar@{.>}[r]
    & \bigoplus_{e\in E_X}(\left\langle e\rangle\subset X)_*\constantsheaf{S}\right)\tensor{1}\semimodule{1}
      \ar@<.7ex>[r]^-{\delminus{.4} \tensor{1}\semimodule{1}}\ar@<-.7ex>[r]_-{\delplus{.4}\tensor{1}\semimodule{1}}
    & \bigoplus_{v\in V_X}\left(v\subset X)_*\constantsheaf{S}\right)\tensor{1}\semimodule{1},
    }
  \end{equation*}
  where the dotted arrow is induced by the natural inclusion $\orientation{S}\ra\chainsheaf{1}$, is an equalizer diagram natural in partial $\semiring{1}$-semimodules $\semimodule{1}$ if $\semimodule{1}$ is flat, $S$ is a ring, or $v$ has in-degree $1$, or $v$ has out-degree $1$.
\end{lem}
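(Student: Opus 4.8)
The plan is to evaluate the displayed diagram at the fixed vertex $v$, recognize $\orientation{S}(v)$ as an explicit equalizer of free $S$-semimodules by Theorem~\ref{thm:equalizer}, and then check case by case that $-\tensor{1}\semimodule{1}$ preserves that equalizer, naturally in $\semimodule{1}$. At $v$ the middle and right-hand terms become the free $S$-semimodules $\chainsheaf{1}(v)=\directsum_{e}S$ (sum over the edges incident to $v$) and $\chainsheaf{0}(v)=S$, with $\partial_{-}$ adding the coordinates of the edges of source $v$ and $\partial_{+}$ adding the coordinates of the edges of target $v$. The sheaf $(v\subset X)_{*}\constantsheaf{S}$ has stalk $\constantsheaf{S}$ at $v$ and stalk $0$ elsewhere, both flat, so Theorem~\ref{thm:equalizer} applies to it with no restriction on $X$ and identifies $\orientation{S}(v)=\dihomology{1}(X;(v\subset X)_{*}\constantsheaf{S})$ --- via the natural monomorphism which is the dotted arrow of the lemma --- with the equalizer of $\partial_{-},\partial_{+}\colon\chainsheaf{1}(v)\ra\chainsheaf{0}(v)$ in $\COEFFICIENTS$. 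So the lemma amounts to the assertion that $-\tensor{1}\semimodule{1}$ sends this equalizer to an equalizer in $\CONSTRAINTS$.

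If $\semimodule{1}$ is flat, this holds by the very definition of flatness. If $S$ is a ring, then $\CONSTRAINTS=\COEFFICIENTS$ is the category of $S$-modules [Corollary~\ref{cor:ab.categories}]; the image of $\partial_{-}-\partial_{+}\colon\chainsheaf{1}(v)\ra S$ is $0$ when every edge incident to $v$ is a self-loop (then $\orientation{S}(v)=\chainsheaf{1}(v)$ and the claim is trivial) and is all of $S$ otherwise, since a single non-self-loop incident edge already hits a unit. In the remaining case $0\ra\orientation{S}(v)\ra\chainsheaf{1}(v)\xra{\partial_{-}-\partial_{+}}S\ra 0$ is short exact with $S$ projective, hence split, hence stays exact after applying $-\tensor{1}\semimodule{1}$; this presents $\orientation{S}(v)\tensor{1}\semimodule{1}$ as $\ker\bigl((\partial_{-}-\partial_{+})\tensor{1}\semimodule{1}\bigr)$, the required equalizer.

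Finally suppose $v$ has in-degree $1$ (the out-degree $1$ case is symmetric), and let $e_{-}$ be the unique edge with source $v$. By Lemma~\ref{lem:generating-orientations}, $\orientation{S}(v)$ is free, with basis the self-loops at $v$ together with the sums $e_{-}+e_{+}$ over edges $e_{+}$ of target $v$ that are not self-loops, and the natural monomorphism $\orientation{S}(v)\ira\chainsheaf{1}(v)$ is the evident embedding of this basis into the free module $\chainsheaf{1}(v)=\directsum_{e}S$. Applying $-\tensor{1}\semimodule{1}$ and using $\chainsheaf{1}(v)\tensor{1}\semimodule{1}=\directsum_{e}\semimodule{1}$ [Proposition~\ref{prop:tensors}], the induced map $\orientation{S}(v)\tensor{1}\semimodule{1}\ra\directsum_{e}\semimodule{1}$ is monic --- it is the identity on every coordinate except the $e_{-}$-coordinate, on which it returns a sum --- and a direct comparison with the formulas for $\partial_{-}\tensor{1}\semimodule{1}$ and $\partial_{+}\tensor{1}\semimodule{1}$ shows its image is exactly their equalizer. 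Naturality in $\semimodule{1}$ is clear in each case.

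In every case the content is a left-exactness phenomenon: that the equalizer-defining monomorphism $\orientation{S}(v)\ira\chainsheaf{1}(v)$ survives $-\tensor{1}\semimodule{1}$ with the correct image, even though $-\tensor{1}\semimodule{1}$ a priori only preserves colimits. For flat $\semimodule{1}$ this is built into the definition, and over a ring it reduces to the splitting of a single short exact sequence; the genuinely delicate step is the in-degree (or out-degree) $1$ case, since there $\orientation{S}(v)\ira\chainsheaf{1}(v)$ need not be the inclusion of a direct summand of $S$-semimodules, so one cannot argue abstractly and must instead track the explicit basis of Lemma~\ref{lem:generating-orientations} through the tensor product by hand, verifying monicity and the image description directly, with self-loops at $v$ meriting separate bookkeeping.
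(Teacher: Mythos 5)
Your proposal is correct and follows essentially the same route as the paper: flatness handles the first case by definition, the ring case is a local Universal-Coefficients computation (the paper cites the UCT for local simplicial homology where you instead unwind it into the split short exact sequence $0\ra\orientation{S}(v)\ra\chainsheaf{1}(v)\ra S\ra 0$, which is the same argument), and the in-degree/out-degree $1$ case reduces via the free basis of Lemma \ref{lem:generating-orientations} to an explicit diagram checked by inspection, exactly as in the paper's diagram (\ref{eqn:free.tensored.orientations}). Your closing remark that self-loops at $v$ need separate bookkeeping in the degree-$1$ case is a fair point that the paper's own computation also glosses over.
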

\begin{proof}
  For $\semimodule{1}$ flat, $\semimodule{1}\tensor{1}-$ preserves equalizer diagrams by definition.

  For $S$ a ring, the difference between parallel arrows in the above diagram is the degree $1$ differential in the chain complex of local simplicial chains at $v$ with coefficients in $\semimodule{1}$. 
  Hence the equalizer of the solid arrows in the statement of the lemma is the first local simplicial homology at $v$ with coefficients in $\semimodule{1}$ at $v$.
  That local homology module naturally is isomorphic to $\orientation{S}(v)\tensor{1}\semimodule{1}$ by the Universal Coefficients Theorem for Homology.

  Consider the case there exists a unique edge $e_-\in\edges{\digraph{1}}$ such that $\delminus{.4} e_-=v$.
  Let $e_+$ denote an element in $\delplus{.4}^{-1}(v)$.
  Then the diagram in the statement of the lemma is isomorphic to the diagram
  \begin{equation}
    \label{eqn:free.tensored.orientations}
    \xymatrix@C=4pc{
      \directsum_{e_+}\semimodule{1}\ar[r]^-{\directsum_{e_+}\iota_{e_+}}
   &  \directsum_{e\in\delminus{.4} ^{-1}(v)\cup\delplus{.4}^{-1}(v)}\semimodule{1}
      \ar@<.7ex>[r]^-{\delminus{.4} }\ar@<-.7ex>[r]_-{\delplus{.4}}
    & \semimodule{1},
    }
  \end{equation}
  by Lemma \ref{lem:generating-orientations}, where $\iota_{e_+}$ is the sum of inclusion of $\semimodule{1}$ into the $e_+$th summand and inclusion of $\semimodule{1}$ into the $e_-$th summand and $\partial$ maps the $e$th summand isomorphically onto $\semimodule{1}$ if $\delminus{.4} e=v$ and $0$ otherwise for $\partial=\delminus{.4} ,\delplus{.4}$.
  The diagram (\ref{eqn:free.tensored.orientations}) is an equalizer diagram by inspection.
\end{proof}

\begin{eg}[The freeness of orientations]
  Consider the digraphs
  \vspace{.1in}
  \label{fig:freeness}
  \begin{center}
  \begin{tikzpicture}[->,>=stealth',shorten >=1pt,auto,node distance=1.5cm,
                    thick,main node/.style={circle,fill=red!20,draw,font=\sffamily\small}]

  \node[main node] (1) {};
  \node[main node] (2) [below right of=1] {$v_1$};
  \node[main node] (3) [below left of=2] {};

  \path[every node/.style={font=\small}]
    (1) edge node [right] {} (2)
    (3) edge node [right] {} (2);
  \end{tikzpicture} 
  \hspace{.2in}
  \begin{tikzpicture}[->,>=stealth',shorten >=1pt,auto,node distance=1.5cm,
                    thick,main node/.style={circle,fill=blue!20,draw,font=\sffamily\small}]

  \node[main node] (1) {};
  \node[main node] (2) [below right of=1] {$v_2$};
  \node[main node] (3) [below left of=2] {};
  \node[main node] (4) [right of=2] {};

  \path[every node/.style={font=\small}]
    (1) edge node [right] {} (2)
    (3) edge node [right] {} (2)
    (2) edge node [right] {} (4);
  \end{tikzpicture} 
  \hspace{.2in}
  \begin{tikzpicture}[->,>=stealth',shorten >=1pt,auto,node distance=1.5cm,
                    thick,main node/.style={circle,fill=yellow!20,draw,font=\sffamily\small}]

  \node[main node] (5) {};
  \node[main node] (6) [below right of=5] {$v_3$};
  \node[main node] (7) [below left of=6] {};
  \node[main node] (8) [above right of=6] {};
  \node[main node] (9) [below right of=6] {};

  \path[every node/.style={font=\small}]
    (5) edge node [right] {} (6)
    (7) edge node [right] {} (6)
    (6) edge node [right] {} (8)
    (6) edge node [right] {} (9);
  \end{tikzpicture} 
  \end{center} 
  While $\orientation{\N}(v_1)=0$ and $\orientation{\N}(v_2)\cong\N\oplus\N$ are free as $\N$-semimodules, $\orientation{\N}(v_3)$ is isomorphic to the quotient of $\N[\gamma_1,\gamma_2,\gamma_3,\gamma_4]$ modulo the relation $\gamma_1+\gamma_2=\gamma_3+\gamma_4$ and hence is not a free as an $\N$-semimodule. However, $\orientation{\N}(v_1)=\Z$, $\orientation{\Z}(v_2)\cong\Z\oplus\Z$ and $\orientation{\Z}(v_3)\cong\Z\oplus\Z\oplus\Z$ are all free as $\Z$-modules.
\end{eg}

Orientation sheaves on digraphs measure the degree to which a digraph bifurcates; in other words, orientation sheaves restrict to constant sheaves on cycles and directed paths unbounded in the past and future.

\begin{lem}
  \label{lem:constant.orientations}
  Consider a digraph $X$ such that one of the following holds:
  \begin{enumerate}
    \item Each vertex in $X$ has in-degree and out-degree both $1$.
    \item The semiring $S$ is a ring and each vertex in the digraph has total degree $2$.
  \end{enumerate}
  Then the orientation sheaf $\orientation{S}$ on $X$ is constant.
\end{lem}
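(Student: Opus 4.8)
\subsection*{Proof proposal}

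The plan is to read off the orientation sheaf cell by cell from the equalizer presentation of Lemma \ref{lem:tensored.orientations}, to observe that under either hypothesis every stalk of $\orientation{S}$ is a free rank-one $S$-semimodule and every restriction map is an isomorphism, and then to glue these local isomorphisms into a global isomorphism $\orientation{S}\cong\constantsheaf{S}$. The one subtle point is the vanishing of monodromy around directed loops.

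The free $S$-semimodule $S$ is flat [Theorem \ref{thm:flatness}], so Lemma \ref{lem:tensored.orientations} applies with $\semimodule{1}=S$; since $\orientation{S}\tensor{1}S\cong\orientation{S}$ it exhibits $\orientation{S}$ as the objectwise equalizer of
$$\delminus{.4},\delplus{.4}:\directsum_{e\in E_X}(\langle e\rangle\subset X)_*\constantsheaf{S}\rightrightarrows\directsum_{v\in V_X}(v\subset X)_*\constantsheaf{S},$$
where $\delminus{.4}$ (respectively $\delplus{.4}$) carries the $e$-summand identically into the $\delminus{.4}e$-summand (respectively $\delplus{.4}e$-summand). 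Evaluating at a cell: at an edge $e_0$ the left-hand sheaf has stalk $S$ while the right-hand sheaf has stalk $0$, so $\orientation{S}(e_0)=S$; at a vertex $v$ the left-hand stalk is $S[\delminus{.4}^{-1}(v)\cup\delplus{.4}^{-1}(v)]$ and the right-hand stalk is $S$, so
$$\orientation{S}(v)=\left\{\textstyle\sum_e\lambda_e e\ :\ \sum_{\delminus{.4}e=v}\lambda_e=\sum_{\delplus{.4}e=v}\lambda_e\right\},$$
and the restriction map $\orientation{S}(v\facerelation e_0)$ is projection onto the $e_0$-coordinate.

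Next I verify that every stalk $\orientation{S}(v)$ is free of rank one and every $\orientation{S}(v\facerelation e_0)$ invertible. Under hypothesis (1) there is a unique $e_-\in\delminus{.4}^{-1}(v)$ and a unique $e_+\in\delplus{.4}^{-1}(v)$, so $\orientation{S}(v)$ is free on $e_-$ when $e_-=e_+$ and on $e_-+e_+$ otherwise [Lemma \ref{lem:generating-orientations}]; each restriction sends this generator to $1\in S$ and so is an isomorphism, and --- crucially --- introduces no sign. Under hypothesis (2) a vertex of total degree $2$ has in/out-degrees $(1,1)$, $(2,0)$, or $(0,2)$; the first is as in (1), while in the other two $\orientation{S}(v)=\{\lambda_1e_1+\lambda_2e_2\ :\ \lambda_1+\lambda_2=0\}$, which since $S$ is a ring is free on $e_1-e_2$, the two restrictions being $\lambda\mapsto\lambda$ and $\lambda\mapsto-\lambda$, again isomorphisms.

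Finally I assemble the isomorphism: choosing one edge in each connected component and declaring $\phi:\orientation{S}\to\constantsheaf{S}$ to be the identity there, I propagate $\phi$ across incident vertices onto neighbouring edges using invertibility of the restriction maps, which determines $\phi$ at every cell as an isomorphism provided $\phi$ is well defined. Well-definedness reduces to the composite of restriction isomorphisms around each directed loop being the identity, and this is the main obstacle. Under (1) it is immediate, since no restriction map carries a sign: each vertex imposes only the equality $\lambda_{e_-}=\lambda_{e_+}$, so the value is transported unchanged once around the loop. Under (2), by Corollary \ref{cor:ab.categories} and Lemma \ref{lem:classical.orientations} the sheaf $\orientation{S}$ is the classical orientation sheaf of the realization of $X$, which, every vertex having total degree $2$, is a topological $1$-manifold without boundary --- a disjoint union of lines and circles by the classification of $1$-manifolds --- and therefore orientable; hence $\orientation{S}$ is constant on each component. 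As every component contributes a copy of $\constantsheaf{S}$ with the common stalk $S$, the sheaf $\orientation{S}$ on $X$ is constant.
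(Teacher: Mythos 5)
Your proof is correct and follows essentially the same route as the paper's: in case (1) you identify each stalk as free of rank one on the canonical generator $e_-(v)+e_+(v)$ via Lemma \ref{lem:generating-orientations} and note that restrictions preserve these generators, and in case (2) you reduce to the orientability of the classical orientation sheaf on a $1$-manifold. The only difference is one of detail: the paper leaves the gluing of the local trivializations implicit (the canonical generators already furnish a global isomorphism to $\constantsheaf{S}$, so the monodromy check you carry out is automatic in case (1)), whereas you spell it out.
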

\begin{proof}
  Consider the case that for each vertex $v$ there exist unique $e_-(v)\in\delminus{.4} ^{-1}(v)$ and $e_+(v)\in\delplus{.4}^{-1}(v)$.  
  Then $\orientation{S}(v)=S[e_-(v)+e_+(v)]$ and $\orientation{S}(\partial e\facerelation e)$ sends $e_-(\partial e)+e_+(\partial e)$ to $e_-(\partial e)$ or $e_+(\partial e)$ for $\partial=\delminus{.4} ,\delplus{.4}$ [Lemma \ref{lem:generating-orientations}].

  In the case $S$ is a ring and each vertex has total degree $2$, $\orientation{S}$ is the orientation sheaf over $S$ on a $1$-manifold, which is orientable over $S$.
\end{proof}

\begin{eg}[Constant orientations]
  \label{fig:constant.orientations}
  Consider the digraphs
  \vspace{.1in}
  \begin{center}
    \begin{tikzpicture}[->,>=stealth',shorten >=1pt,auto,node distance=1.5cm, thick,main node/.style={circle,fill=blue!20,draw,font=\sffamily\small}]
    \coordinate (1);
    \node[main node,fill=red] (2) [above of=1] {};
    \node[main node,fill=red] (3) [above of=2] {};
    \coordinate[above of=3] (4);
    \path[every node/.style={font=\sffamily\small}]
    (1) edge [right] node[left] {} (2)
    (2) edge [right] node[left] {} (3)
    (3) edge [right] node[left] {} (4);
    \coordinate[right of =3] (35);
    \node[main node] (5) [right of =35] {};
    \node[main node] (6) [below left of=5] {};
    \node[main node] (7) [below right of=6] {};
    \node[main node] (8) [below right of=5] {};
    \path[every node/.style={font=\sffamily\small}]
    (5) edge [bend right] node[left] {} (6)
    (6) edge [bend right] node[left] {} (7)
    (7) edge [bend right] node[right] {} (8)
    (8) edge [bend right] node[right] {} (5);
    \coordinate[right of =5] (510);
    \node[main node,fill=yellow] (11) [right of=510] {};
    \node[main node,fill=yellow] (10) [below of=11] {$v_1$};
    \coordinate[below of =10] (9);
    \coordinate[above of=11] (12);
    \path[every node/.style={font=\sffamily\small}]
    (9) edge [right] node[left] {} (10)
    (11) edge [right] node[left] {} (10)
    (11) edge [right] node[left] {} (12);
    \coordinate[right of =11] (1113);
    \node[main node,fill=green] (13) [right of=1113] {};
    \node[main node,fill=green] (14) [below left of=13] {$v_2$};
    \node[main node,fill=green] (15) [below right of=14] {};
    \node[main node,fill=green] (16) [below right of=13] {};
    \path[every node/.style={font=\sffamily\small}]
    (13) edge [bend right] node[left] {} (14)
    (15) edge [bend left] node[right] {} (14)
    (15) edge [bend right] node[right] {} (16)
    (16) edge [bend right] node[right] {} (13);
    \end{tikzpicture}
  \end{center} 
  Over the left two digraphs, $\orientation{\N}=\constantsheaf{\N}$.  
  Over all four digraphs, $\orientation{\Z}=\constantsheaf{\Z}$. 
  Over the right two digraphs, $\orientation{\N}(v_1)=\orientation{\N}(v_2)=0$ and hence $\orientation{\N}\neq\constantsheaf{\N}$.
\end{eg}

\begin{lem}
  \label{lem:sd-orientation}
  Fix a digraph $X$. 
  Over $\sd X$, $\sd\orientation{S}\cong\orientation{S}$ naturally.
\end{lem}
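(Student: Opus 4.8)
The plan is to construct the isomorphism cellwise over $\sd X$, check that it intertwines the restriction maps, and note that naturality in $X$ is visible from the construction. Recall that on the left $(\sd\orientation{S})(v)=\orientation{S}(v)$ and $(\sd\orientation{S})(e)=(\sd\orientation{S})(e_\pm)=\orientation{S}(e)$ for $v\in V_X$ and $e\in E_X$, with $(\sd\orientation{S})(v\facerelation e_\pm)=\orientation{S}(v\facerelation e)$ and $(\sd\orientation{S})(e\facerelation e_\pm)=1_{\orientation{S}(e)}$, while on the right $\orientation{S}$ is the orientation sheaf of $\sd X$, so that $\orientation{S}(c)=\dihomology{1}((\sd X,\sd X-c);\constantsheaf{S})$.

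First I would reduce, by naturality (the device already used for Proposition \ref{prop:sd-homologie}), to the case in which every vertex of $X$ has positive in-degree and positive out-degree; then so does $\sd X$, since the new vertices $e\in E_X$ acquire in-degree and out-degree $1$. Over any such digraph $Y$, Lemma \ref{lem:tensored.orientations} applied with the flat semimodule taken to be $S$ itself identifies $\orientation{S}$ with the objectwise equalizer $\operatorname{eq}(\chainsheaf{1}\rightrightarrows\chainsheaf{0})$ of its $0$- and $1$-chain sheaves; and since the subdivision functor merely copies stalks and inserts identity restrictions it preserves equalizers, so $\sd\orientation{S}=\operatorname{eq}(\sd\chainsheaf{1}^{X}\rightrightarrows\sd\chainsheaf{0}^{X})$. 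I would then exhibit the comparison as a natural isomorphism of equalizer diagrams over $\sd X$ matching canonical local orientations: at a vertex $v\in V_X$ relabel an edge $f$ of $X$ incident to $v$ by $f_-$ if it leaves $v$, by $f_+$ if it enters $v$, and by $f_-+f_+$ if $f$ is a loop at $v$; at a vertex $e\in E_X$---which has in-degree and out-degree $1$ in $\sd X$, so that $\orientation{S}(e)=S\cdot(e_-+e_+)$ over $\sd X$ by Lemma \ref{lem:generating-orientations}---send the canonical local orientation $e$ to $e_-+e_+$; and at an edge $e_\pm$ send $e$ to $e_\pm$. One then checks cellwise that this carries $\operatorname{eq}(\sd\chainsheaf{1}^{X}\rightrightarrows\sd\chainsheaf{0}^{X})$ isomorphically onto $\operatorname{eq}(\chainsheaf{1}^{\sd X}\rightrightarrows\chainsheaf{0}^{\sd X})$ and commutes with the restriction maps, since each such restriction reads off the coefficient of the relevant (sub)edge.

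The step I expect to be the main obstacle is matching the orientation stalks at the cells arising from edges of $X$: the coefficient sheaf $(\langle e\rangle\subset X)_*\constantsheaf{S}$ whose first homology computes $\orientation{S}(e)$ subdivides to a sheaf supported on the whole subdivided edge $\{e_-,e,e_+\}$, not on the single cell whose removal defines $\orientation{S}(e)$ over $\sd X$, so Proposition \ref{prop:sd-homologie} does not immediately close the comparison there; one has to route the argument through the chain-sheaf equalizer as above, or else argue directly that the evident natural transformation between those two coefficient sheaves is a $\dihomology{1}(\sd X;-)$-isomorphism. The same care is needed at vertices carrying self-loops, where the chain sheaves of $X$ and of $\sd X$ have genuinely different stalks but the same equalizer.
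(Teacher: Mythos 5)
Your proposal is correct and follows essentially the same route as the paper: both compare the chain-sheaf equalizer diagram over $\sd X$ with $\sd$ applied to the one over $X$ via the cellwise map that is the evident relabelling at old cells and sends $e$ to $e_-+e_+$ at the new vertices $e\in E_X\subset V_{\sd X}$, then check stalkwise that the induced map of equalizers is an isomorphism (the paper citing Lemma \ref{lem:constant.orientations} where you cite Lemma \ref{lem:generating-orientations}). Your preliminary reduction to positive in- and out-degrees is superfluous --- Lemma \ref{lem:tensored.orientations} with $M=S$ needs no degree hypothesis since $S$ is flat, and your cellwise construction works in general --- but it does no harm, and your explicit handling of self-loops is if anything more careful than the paper's.
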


In the case $S$ a ring, the lemma is just the topological invariance of simplicial homology and a proof is just the observation that $\delplus{.4}-\delminus{.4} $ is subdivision invariant.
In the general case, $S$ may not even be embeddable in a ring and so $\orientation{S}$ cannot be readily constructed in an obviously invariant manner.

\begin{proof}[proof of Lemma \ref{lem:sd-orientation}]
  Define the following solid diagram of $S$-sheaves on $\sd X$ as follows.
  \begin{equation*}
    \xymatrix@C=3pc{
      \orientation{S}
      \ar[r]
      \ar@{.>}[d]
    &  \bigoplus_{e\in E_X}(\left\langle e\rangle\subset X)_*\constantsheaf{S}\right)
      \ar@<.7ex>[r]^-{\delminus{.4} }\ar@<-.7ex>[r]_-{\delplus{.4}}\ar[d]
    & \bigoplus_{v\in V_X}(v\subset X)_*\constantsheaf{S}
      \ar[d]
      \\
       \sd\orientation{S}
       \ar[r]
    &  \sd\bigoplus_{e\in E_X}(\left\langle e\rangle\subset X)_*\constantsheaf{S}\right)(v)
      \ar@<.7ex>[r]^-{\sd\delminus{.4} }\ar@<-.7ex>[r]_-{\sd\delplus{.4}}
    & \sd\bigoplus_{v\in V_X}(v\subset X)_*\constantsheaf{S},     
    }
  \end{equation*}
  Let the top left horizontal arrow be defined by the top equalizer diagram.
  Let the bottom left horizontal arrow be defined so that the bottom row is $\sd$ applied to an equalizer diagram, and hence also an equalizer diagram.
  Let the right vertical arrow be defined on each $v\in V_X\subset V_{\sd X}$ by the identity function and the $0$-map elsewhere.
  Let the middle arrow be defined on each $v\in V_X\subset V_{\sd X}$ and $e\in E_{\sd X}$ by the identity function and on each $e\in E_X\subset V_{\sd X}$ by the $S$-homorphism sending $e$ to $e_-+e_+$.
  The solid diagram jointly commutes by inspection and hence induces the dotted arrow.

  Over $v\in V_X\subset V_{\sd X}$, the solid and hence also dotted vertical arrows are isomorphisms.
  Over $e\in E_X\subset V_{\sd X}$, the dotted arrow is an isomorphism [Lemma \ref{lem:constant.orientations}].
  Over $e\in E_{\sd X}$, the top and bottom left horizontal arrows are isomorphisms, the middle vertical arrow is an isomorphism, and hence the dotted arrow is an isomorphism.
\end{proof}

\subsection{Duality}\label{subsec:duality}
The following duality relates cohomology with homology.

\begin{thm}
  \label{thm:pd}
  Fix digraph $X$ and open $U\subset X$.
  There exist dotted arrows inside
  \begin{equation}
    \label{eqn:relationship}
    \xymatrix{
      **[l] \dicohomology{0}(X-U;\orientation{S}\tensor{1}\sheaf{1})
      \ar@{.>}[r]
      \ar@<.7ex>[d]^-{\connectinghom_-}\ar@<-.7ex>[d]_-{\connectinghom_+}
    & **[r] \dihomology{1}((X,U);\sheaf{1})
      \ar@<.7ex>[d]^-{\connectinghom_-}\ar@<-.7ex>[d]_-{\connectinghom_+}
    \\
      **[l] \dicohomology{1}((X,X-U);\orientation{S}\tensor{1}\sheaf{1})
      \ar@{.>}[r]
    & **[r] \dihomology{0}(U;\sheaf{1}), 
  }
  \end{equation}
  natural in partial $S$-sheaves $\sheaf{1}$ on $X$, making the diagram jointly commute.
  The top arrow is an isomorphism and the bottom arrow is a surjection.
  The bottom arrow is an isomorphism if $S$ is a ring and each vertex has positive total degree or each vertex in $X$ has both positive in-degree and positive out-degree.
\end{thm}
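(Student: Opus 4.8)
The plan is to verify every assertion on the explicit (co)equalizer diagrams of Definitions~\ref{defn:homologie} and~\ref{defn:relative.H1}, working throughout on $\sd X$. First I would reduce to $X$ compact (by the defining colimits of $\dicohomology{\grading}$ and the corresponding behaviour of $\dihomology{\grading}$) and to $\sheaf{1}$ a finite direct sum of pushforwards $(c\subset X)_*\constantsheaf{P}$ of constant sheaves of finitely generated projective partial $S$-semimodules: by Proposition~\ref{prop:H1} and the observations that $-\tensor{1}-$ preserves coequalizers, sends such a pushforward tensored with another again to such a pushforward (supported on the intersection, with stalk the tensor of the stalks, using $S\tensor{1}Q\cong Q$), and sends c-sectionwise coequalizers to c-sectionwise coequalizers, all four corners of~(\ref{eqn:relationship}) are determined by their restriction to such $\sheaf{1}$ in the manner of Proposition~\ref{prop:H1}. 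For such $\sheaf{1}$ no resolution is needed in Definition~\ref{defn:homologie}, and — using Proposition~\ref{prop:sd-cohomologie} and Lemma~\ref{lem:sd-orientation} to replace $\dicohomology{0}(X-U;\orientation{S}\tensor{1}\sheaf{1})$ and $\orientation{S}$ by their subdivisions, while $\dihomology{\grading}$ is already defined through $\sd X$ — every corner becomes the (co)equalizer of a two-term diagram of finitely generated free semimodules built from the cells of $\sd X$ and from $\dicohomology{0}$ of closed stars $\langle e\rangle$.

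\emph{The top arrow.} Write $c_e(\gamma)\in S$ for the coefficient of an edge $e$ at $v$ in $\gamma\in\orientation{S}(v)$, regarding $\orientation{S}(v)$ as a subsemimodule of $\chainsheaf{1}(v)$ spanned by the edges incident to $v$. Using the equalizer presentation of $\orientation{S}$ (Lemma~\ref{lem:tensored.orientations}) and the generators of $\orientation{S}(v)$ (Lemmas~\ref{lem:primal-orientations} and~\ref{lem:generating-orientations}), an element of $\dicohomology{0}(X-U;\orientation{S}\tensor{1}\sheaf{1})$ is an assignment $v\mapsto\tau_v\in\orientation{S}(v)\tensor{1}\sheaf{1}(v)$, $v\in V_{X-U}$, written $\tau_v=\sum_j\gamma_j\tensor{1}m_j$ with each $\gamma_j$ a self-loop or a sum $e_-+e_+$ of an in-edge and an out-edge at $v$ — so $\sum_{\delminus{.4} e=v}c_e(\gamma_j)=\sum_{\delplus{.4} e=v}c_e(\gamma_j)$ for every $j$ — these assignments agreeing along every edge of $X-U$ under the coordinate-extraction restriction maps. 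Unwinding Definition~\ref{defn:homologie} for the sheaf $(X-U\subset X)_*\constantsheaf{S}\tensor{1}\sheaf{1}$, which is $\sheaf{1}$ on $X-U$ and $0$ on $U$, exhibits $\dihomology{1}((X,U);\sheaf{1})$ as precisely the same data: a compatible pair of $\sheaf{1}$-values at the two ends of each edge, vanishing over the cells of $U$, satisfying Kirchhoff's law at each vertex. Sending $(\tau_v)_v$ to the family of its edge-coordinates is an $S$-linear natural bijection, hence is the required top isomorphism.

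\emph{The bottom arrow, joint commutativity and surjectivity.} Here $\dicohomology{1}((X,X-U);\orientation{S}\tensor{1}\sheaf{1})$ is the coequalizer of $\bigoplus_{v\in V_U}\orientation{S}(v)\tensor{1}\sheaf{1}(v)\rightrightarrows\bigoplus_{e\in E_U}\sheaf{1}(e)$, using $\orientation{S}(e)\tensor{1}\sheaf{1}(e)\cong\sheaf{1}(e)$, while $\dihomology{0}(U;\sheaf{1})$ is a quotient of $\bigoplus_{c\in U}\sd\sheaf{1}(c)$. I would define the bottom arrow as the map induced on coequalizers by the inclusion of $\bigoplus_{e\in E_U}\sheaf{1}(e)$ into that quotient; well-definedness amounts to checking that $\partial_-\tau_v$ and $\partial_+\tau_v$ have equal image, which is the identity $\sum_{\delminus{.4} e=v}c_e(\gamma_j)=\sum_{\delplus{.4} e=v}c_e(\gamma_j)$ read in $\dihomology{0}(U;\sheaf{1})$ after the subdivision relations $\sheaf{1}(\delminus{.4} e\facerelation e)(\mu)\equiv\mu$ move edge-coordinates back onto their vertices. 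The same bookkeeping makes the two squares of~(\ref{eqn:relationship}) jointly commute, as the connecting maps $\connectinghom_-,\connectinghom_+$ in both columns are assembled from these same restriction and projection maps. Surjectivity follows at once: in $\dihomology{0}(U;\sheaf{1})$ every vertex cell carrying an incident edge — hence, $U$ being open, every vertex cell with a neighbour in $X$ — is identified with the image of one of its incident edges, so the edge cells of $U$ already generate $\dihomology{0}(U;\sheaf{1})$.

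\emph{The bottom isomorphism.} If $S$ is a ring then $\CONSTRAINTS=\COEFFICIENTS$ is abelian (Corollary~\ref{cor:ab.categories}), $\dicohomology{\grading}$ is compactly supported sheaf cohomology (Proposition~\ref{prop:classical.cohomologie}) and $\orientation{S}$ is the classical orientation sheaf up to tensoring (Lemma~\ref{lem:classical.orientations}), so the bottom arrow is the Poincar\'e--Lefschetz--Verdier comparison, an isomorphism as soon as no vertex is isolated, i.e.\ every vertex has positive total degree. For a general semiring with every vertex of positive in- and out-degree one has $\partial_-X=\partial_+X=\varnothing$, and I would prove injectivity by showing directly that the homology coequalizer imposes no relation on $\bigoplus_{e\in E_U}\sheaf{1}(e)$ beyond those already present in $\dicohomology{1}((X,X-U);\orientation{S}\tensor{1}\sheaf{1})$. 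This last step is the main obstacle: the abelian/Verdier machinery is unavailable, Theorem~\ref{thm:equalizer} is not (its proof is deferred past this theorem), flatness is not assumed, and $\orientation{S}$ need not be stalkwise free here (Lemma~\ref{lem:generating-orientations} requires an in- or out-degree equal to $1$), so the two coequalizer presentations must be compared by hand — most cleanly by induction on a maximal chain of the finite poset $U$ in the spirit of the proof of Lemma~\ref{lem:exactness}, the positivity of both degrees being exactly what makes the boundary terms of each inductive step cancel.
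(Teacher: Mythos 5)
There is a genuine gap, and it sits in your very first reduction. You claim that ``all four corners of~(\ref{eqn:relationship}) are determined by their restriction to [projective] $\sheaf{1}$ in the manner of Proposition~\ref{prop:H1}.'' For the two homology corners this is true by definition. For the corner $\dicohomology{0}(X-U;\orientation{S}\tensor{1}\sheaf{1})$ it is not something you may assert: $\dicohomology{0}$ is an equalizer, i.e.\ a limit, and limits of partial $S$-semimodules do not commute with the coequalizers $\sheaf{1}_1\rra\sheaf{1}_0\ra\sheaf{1}$ used in Proposition~\ref{prop:H1} --- this failure of commutation is precisely why flatness hypotheses appear throughout \S\ref{subsec:semimodules} and in Lemma~\ref{lem:pre.pd}, and why Example~\ref{eg:flatness.necessaire} exists. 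Worse, the statement ``$\dicohomology{0}(X-U;\orientation{S}\tensor{1}-)$ is computed by resolutions'' is, given your verification on projectives, logically equivalent to the assertion that the top arrow is an isomorphism for arbitrary $\sheaf{1}$; assuming it as a reduction step is circular. The paper's proof splits the claim asymmetrically: Lemma~\ref{lem:pre.pd} produces a comparison map that is \emph{monic for every} partial $S$-sheaf $\sheaf{1}$ (no flatness needed) and an isomorphism when $\sheaf{1}$ is flat; surjectivity for general $\sheaf{1}$ then follows by naturality from a projective --- hence flat --- resolution, since $\dihomology{1}((X,U);\sheaf{1}_0)\ra\dihomology{1}((X,U);\sheaf{1})$ is onto by construction. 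You need some substitute for that unconditional monicity statement; your explicit unwinding of sections of $\orientation{S}\tensor{1}\sheaf{1}$ only ever takes place over projective coefficients, where the hard case does not arise.

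Two smaller points. First, the surjectivity $\dicohomology{0}(X-U;\orientation{S}\tensor{1}\sheaf{1}_0)\ra\dicohomology{0}(X-U;\orientation{S}\tensor{1}\sheaf{1})$ that your reduction would also require is not free either: c-sectionwise surjectivity is a condition on open sets, $X-U$ is closed, and tensoring with $\orientation{S}$ need not preserve it. Second, for the final injectivity of the bottom arrow you concede the abelian machinery is unavailable and propose an induction on chains ``in the spirit of Lemma~\ref{lem:exactness}'' without carrying it out; the paper's argument is a one-line observation that, under the stated degree hypotheses, the left vertical arrow comparing the two coequalizer presentations (from $\directsum_{v\in V_U}\orientation{S}(v)\tensor{1}\sheaf{1}(v)$ to the $1$-chains of $\sd U$) is surjective, so every relation imposed in $\dihomology{0}(U;\sheaf{1})$ is already imposed in $\dicohomology{1}((X,X-U);\orientation{S}\tensor{1}\sheaf{1})$. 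Your bookkeeping for joint commutativity and for surjectivity of the bottom arrow, and your identification of the top arrow on projective coefficients, do match the paper's computations.
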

\begin{proof}
  There exists a natural dotted monomorphism
  $$\Delta^1_{\sheaf{1}}:\dicohomology{0}(X-U;\orientation{S}\tensor{1}\sheaf{1})\cong\dicohomology{0}(\sd\;(X-U);\orientation{S}\tensor{1}\sd\sheaf{1})\cong\dihomology{1}((X,U);\sheaf{1})$$
  in (\ref{eqn:relationship}), defining an isomorphism for the case $\sheaf{1}$ flat [Lemma \ref{lem:pre.pd}] and hence surjective for the general case because objectwise projective partial $S$-sheaves are flat. 
  The diagram
  \begin{equation}
    \label{eqn:relationship}
    \xymatrix{
      **[l] \directsum_{v\in V_U}\orientation{S}(v)\tensor{1}\sheaf{1}(v)
      \ar[d]
      \ar@<.7ex>[r]^-{\connectinghom_-}\ar@<-.7ex>[r]_-{\connectinghom_+}
    & **[r] \directsum_{e\in E_U}\orientation{S}(e)\tensor{1}\sheaf{1}(e)
      \ar[d]
    \\
      **[l] \directsum_{e\in E_{\sd U}}\dicohomology{0}(\langle e\rangle;\sheaf{1})
      \ar@<.7ex>[r]^-{\connectinghom_-}\ar@<-.7ex>[r]_-{\connectinghom_+}
    & **[r] \directsum_{x\in V_{\sd U}}(\sd\sheaf{1})(x), 
  }
  \end{equation}
  where the left vertical arrow is induced by projections and the right vertical arrow is inclusion after making the identification $\orientation{S}(e)\tensor{1}\sheaf{1}(e)=\sheaf{1}(e)$ for $e\in E_X$, jointly commutes.
  Hence the vertical arrows induced an arrow $\Delta^0_{\sheaf{1}}:\dicohomology{0}(U;\orientation{S}\tensor{1}\sheaf{1})\ra\dihomology{0}(U;\orientation{S}\tensor{1}\sheaf{1})$.
  The arrow $\Delta^0_{\sheaf{1}}$ is surjective because every element in $\dihomology{0}(U;\sheaf{1})$ is represented by an element in $\directsum_{e\in E_U}(\sd\sheaf{1})(e)$.

  In the case $S$ is a ring and each vertex has positive total degree or each vertex has positive in-degree and positive out-degree, the left vertical arrow is surjective and hence $\Delta^0_{\sheaf{1}}$ is monic.

  The verification that (\ref{eqn:relationship}) jointly commutes is a straightforward diagram chase.
\end{proof}

\begin{cor}
  \label{cor:pd}
  For each $S$-sheaf over $\digraph{1}$,
  $$\dihomology{1}(\digraph{1};\sheaf{1})\cong\dicohomology{0}(\digraph{1};\sheaf{1})$$
  if each vertex has in-degree and out-degree both $1$, or $S$ is a ring and each vertex has total degree $2$.
\end{cor}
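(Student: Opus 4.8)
The plan is to derive the corollary directly from Theorem \ref{thm:pd} by specializing to $U=\varnothing$ and trivializing the orientation twist. First I would record that under either hypothesis on $\digraph{1}$ — every vertex having in-degree and out-degree $1$, or $S$ a ring with every vertex of total degree $2$ — Lemma \ref{lem:constant.orientations} applies and exhibits $\orientation{S}$ as a constant sheaf. Inspecting the stalks appearing in that lemma's proof (and in Lemma \ref{lem:generating-orientations}), $\orientation{S}(v)=S[e_-(v)+e_+(v)]$ is free on a single generator and the restriction maps carry this generator to a free generator of the corresponding edge stalk; hence in fact $\orientation{S}\cong\constantsheaf{S}$. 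Consequently, using the natural objectwise isomorphism $S\tensor{1}B\cong B$ of Proposition \ref{prop:tensors}, one obtains a natural isomorphism $\orientation{S}\tensor{1}\sheaf{1}\cong\constantsheaf{S}\tensor{1}\sheaf{1}\cong\sheaf{1}$ of partial $S$-sheaves on $\digraph{1}$.

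Next I would invoke Theorem \ref{thm:pd} with the open set $U=\varnothing$, so that $X-U=X$. The top dotted arrow of that theorem is an isomorphism
$$\dicohomology{0}(X;\orientation{S}\tensor{1}\sheaf{1})\;\cong\;\dihomology{1}((X,\varnothing);\sheaf{1}),$$
and this arrow is an isomorphism with no further hypotheses on $\digraph{1}$ (the degree conditions in Theorem \ref{thm:pd} concern only the bottom arrow). Unwinding Definition \ref{defn:relative.H1} at $U=\varnothing$ gives $\dihomology{1}((X,\varnothing);\sheaf{1})=\dihomology{1}(X;(X\subset X)_*\constantsheaf{S}\tensor{1}\sheaf{1})=\dihomology{1}(X;\constantsheaf{S}\tensor{1}\sheaf{1})\cong\dihomology{1}(X;\sheaf{1})$, the last step again by the objectwise triviality of tensoring with $\constantsheaf{S}$. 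Feeding the isomorphism $\orientation{S}\tensor{1}\sheaf{1}\cong\sheaf{1}$ from the previous paragraph into the left-hand side then yields $\dicohomology{0}(X;\sheaf{1})\cong\dihomology{1}(X;\sheaf{1})$, which is the assertion.

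The argument is essentially bookkeeping once Theorem \ref{thm:pd} is available; the one point that requires genuine care — and the only place the hypotheses of the corollary enter — is checking that the orientation sheaf under these degree constraints is truly isomorphic to $\constantsheaf{S}$, not merely constant at some other $S$-semimodule, so that the twisting functor $\orientation{S}\tensor{1}(-)$ is naturally the identity on partial $S$-sheaves. That is precisely what the stalk computations of Lemma \ref{lem:generating-orientations} and Lemma \ref{lem:constant.orientations} supply. Everything else is the specialization $U=\varnothing$ together with the definitional identifications of relative $\dihomology{1}$ and of $\constantsheaf{S}\tensor{1}(-)$.
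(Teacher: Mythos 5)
Your proposal is correct and follows essentially the same route as the paper: apply Theorem \ref{thm:pd} (at $U=\varnothing$) to identify $\dihomology{1}(X;\sheaf{1})$ with $\dicohomology{0}(X;\orientation{S}\tensor{1}\sheaf{1})$, use Lemma \ref{lem:constant.orientations} to trivialize the orientation sheaf, and conclude via the unit property of $\constantsheaf{S}$ for $\tensor{1}$. Your explicit check that $\orientation{S}$ is constant at $S$ itself (not merely constant) is a point the paper leaves implicit, and is worth making.
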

\begin{proof}
  Observe that
  $$\dihomology{1}(\digraph{1};\sheaf{1})=\dicohomology{0}(\digraph{1};\sheaf{1}\tensor{1}\orientation{S})\cong\dicohomology{0}(\digraph{1};\sheaf{1}\tensor{1}\constantsheaf{\semiring{1}})\cong\dicohomology{0}(\digraph{1};\sheaf{1}),$$
  the first equality by Theorem \ref{thm:pd}, the middle isomorphism by Lemma \ref{lem:constant.orientations}, and the last isomorphism by $\constantsheaf{\semiring{1}}$ a unit for $\otimes$ in $\semiring{1}h_{X;S}$.
\end{proof}

\begin{eg}[Necessity of restrictions]
  For $X$ the digraph in Example \ref{eg:exactness},
  $$\dihomology{0}(X;\constantsheaf{\N})\cong\N\ncong\N\oplus\N\cong \dicohomology{1}(X;\orientation{\N}).$$
\end{eg}

\begin{lem}
  \label{lem:pre.pd}
  Fix a compact digraph $\digraph{1}$. 
  There exist a monic dotted arrow in
  \begin{equation*}
    \xymatrix@C=2pc{
      \dicohomology{0}(\digraph{1})
        \ar@{.>}[r]
    & \directsum_{e\in\edges{\digraph{1}}}\dicohomology{0}(\langle e\rangle;\sheaf{1})
        \ar@<.7ex>[rrrr]^-{\directsum_v\sum_{\delminus{.4} e=v}\dicohomology{0}(\delminus{.4} e\subset\langle e\rangle)}
        \ar@<-.7ex>[rrrr]_-{\directsum_v\sum_{\delminus{.4} e=v}\dicohomology{0}(\delplus{.4}e\subset\langle e\rangle)}
    & & &
    & \directsum_{v\in\vertices{\digraph{1}}}\sheaf{1}(v)
    }
  \end{equation*}
  with $\dicohomology{\grading}(\digraph{1})=\dicohomology{\grading}(\digraph{1};\orientation{\semiring{1}}\tensor{1}\sheaf{1})$, natural in partial $S$-sheaves $\sheaf{1}$ on $\digraph{1}$ such that the diagram commutes.
  Furthermore, the left three terms in the diagram form an equalizer diagram if $\sheaf{1}$ is flat, $S$ is a ring, or each vertex in $\digraph{1}$ has in-degree $1$ or out-degree $1$.
\end{lem}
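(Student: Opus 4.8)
The plan is to unwind $\dicohomology{0}(X;\orientation{S}\tensor{1}\sheaf{1})$, for the compact digraph $X$, as the equalizer of the vertex-indexed diagram
\[
  \directsum_{v\in V_X}\orientation{S}(v)\tensor{1}\sheaf{1}(v)\;\rra\;\directsum_{e\in E_X}\orientation{S}(e)\tensor{1}\sheaf{1}(e)
\]
of Definition \ref{defn:compact.HH0}, and then to transpose it into the edge-indexed diagram of the statement. First I would record that $\orientation{S}(e)\cong S$ for every edge $e$ (the local homology at a maximal cell is free of rank one), so that the right-hand term above is canonically $\directsum_e\sheaf{1}(e)$; and that $\dicohomology{0}(\langle e\rangle;\sheaf{1})$ is the pullback of $\sheaf{1}(\delminus{.4}e)\ra\sheaf{1}(e)\la\sheaf{1}(\delplus{.4}e)$ along the two restriction maps of $\sheaf{1}$.

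The dotted arrow $\Phi$ then sends $\xi=(\xi_v)_v$ in the equalizer to the tuple whose $e$-component is the pair
\[
  \bigl((\orientation{S}(\delminus{.4}e\facerelation e)\tensor{1}1)(\xi_{\delminus{.4}e}),\ (\orientation{S}(\delplus{.4}e\facerelation e)\tensor{1}1)(\xi_{\delplus{.4}e})\bigr)\in\sheaf{1}(\delminus{.4}e)\oplus\sheaf{1}(\delplus{.4}e),
\]
each $\orientation{S}(v\facerelation e)\tensor{1}1$ being tensored with the identity of $\sheaf{1}(v)$ and followed by the identification $\orientation{S}(e)\tensor{1}\sheaf{1}(v)=\sheaf{1}(v)$. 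Two checks remain. That this pair lies in the pullback $\dicohomology{0}(\langle e\rangle;\sheaf{1})$: postcomposing the two coordinates with $\sheaf{1}(\delminus{.4}e\facerelation e)$ and $\sheaf{1}(\delplus{.4}e\facerelation e)$ recovers the two sides of the $e$-summand of the equalizer relation for $\xi$, which coincide by hypothesis. That $\Phi$ followed by either parallel arrow into $\directsum_v\sheaf{1}(v)$ gives the same map: the $v$-component of the two composites is $\sum_{\delminus{.4}e=v}(\orientation{S}(v\facerelation e)\tensor{1}1)(\xi_v)$ and $\sum_{\delplus{.4}e=v}(\orientation{S}(v\facerelation e)\tensor{1}1)(\xi_v)$, and these agree because $\orientation{S}(v)=\dihomology{1}((X,X-v);\constantsheaf{S})$ is --- the coefficient sheaf being stalkwise free --- presented by Definitions \ref{defn:homologie} and \ref{defn:orientations} as an equalizer of two maps $\directsum S\rra S$ summing over out- resp.\ in-incidences at $v$, so that applying $-\tensor{1}\sheaf{1}(v)$ to the tautology ``first composite $=$ second composite'' for that equalizer gives exactly the identity needed. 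Naturality in $\sheaf{1}$ is built into the construction.

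The monicity of $\Phi$ is the step I expect to be the main obstacle: since $-\tensor{1}\sheaf{1}(v)$ need not preserve monomorphisms and $\orientation{S}(v)$ can fail to be free (Example \ref{fig:freeness}), the naive argument ``the restriction maps $\orientation{S}(v\facerelation e)$ jointly separate points of $\orientation{S}(v)$'' does not survive tensoring. I would handle this by first passing to $\sd X$, which is legitimate by Proposition \ref{prop:sd-cohomologie}, Lemma \ref{lem:sd-orientation} and the identifications $\dicohomology{0}(\langle e_-\rangle;\sd\sheaf{1})\cong\sheaf{1}(\delminus{.4}e)$, $\dicohomology{0}(\langle e_+\rangle;\sd\sheaf{1})\cong\sheaf{1}(\delplus{.4}e)$. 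In $\sd X$ every new vertex (a former edge) has in- and out-degree one, so by Lemma \ref{lem:generating-orientations} its orientation stalk is free of rank one and the incident restriction maps of $\orientation{S}$ are isomorphisms; this determines the midpoint components of an equalizer element rigidly from its endpoint components and reduces the injectivity statement to the original vertices, where it can be read off from the explicit description of $\orientation{S}(v)$ and its restriction maps in Lemmas \ref{lem:primal-orientations} and \ref{lem:generating-orientations}.

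For the final assertion, suppose $S$ is a ring, or each $\sheaf{1}(v)$ is flat, or each vertex has in-degree one or out-degree one. Applying Lemma \ref{lem:tensored.orientations} stalkwise with $M=\sheaf{1}(v)$ identifies $\orientation{S}(v)\tensor{1}\sheaf{1}(v)$ with the equalizer of $\directsum_{e\ni v}\sheaf{1}(v)\rra\sheaf{1}(v)$. Feeding these local equalizers into the global equalizer defining $\dicohomology{0}(X;\orientation{S}\tensor{1}\sheaf{1})$ and matching the result against the pullback presentation of $\dicohomology{0}(\langle e\rangle;\sheaf{1})$ shows that $\Phi$ is an isomorphism onto the equalizer of the two parallel arrows of the statement, so that the left three terms form an equalizer diagram.
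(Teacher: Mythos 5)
Your construction of the dotted arrow, the commutativity check, and the conditional equalizer statement all track the paper's own proof: the paper obtains the arrow as $\dicohomology{0}(X;\iota_S\tensor{1}1_{\sheaf{1}})$ for $\iota_S$ the defining inclusion of $\orientation{S}$ into the sheaf of $1$-chains, and settles the ``furthermore'' clause exactly as you do, by invoking Lemma \ref{lem:tensored.orientations} stalkwise and then applying the equalizer-preserving functor $\dicohomology{0}(X;-)$. Your componentwise description is the same map written out in elements, and your commutativity argument --- tensoring the tautological identity ``$\delminus{.4}\circ\iota_v=\delplus{.4}\circ\iota_v$'' for the equalizer $\orientation{S}(v)$ with $1_{\sheaf{1}(v)}$ --- is the intended one.

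The gap is in the monicity step, and it sits exactly where you predicted. Subdivision does dispose of the new vertices of $\sd X$: there $\orientation{S}$ is stalkwise free of rank one by Lemma \ref{lem:generating-orientations} and the incident restriction maps are isomorphisms. But subdivision leaves the stalks $\orientation{S}(v)$ at the original vertices, and their incident restriction maps, completely unchanged, so after your reduction you are facing precisely the question you started with: whether
\begin{equation*}
  \iota_v\tensor{1}1_{\sheaf{1}(v)}:\orientation{S}(v)\tensor{1}\sheaf{1}(v)\ra\directsum_{e\in\delminus{.4}^{-1}(v)\cup\delplus{.4}^{-1}(v)}\sheaf{1}(v)
\end{equation*}
is injective on the $v$-components of equalizer elements. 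Lemma \ref{lem:generating-orientations} covers only vertices of in- or out-degree $1$, and Lemma \ref{lem:primal-orientations} describes generators of $\orientation{\N}(v)$ but says nothing about what survives tensoring; for the vertex $v_3$ of Example \ref{fig:freeness}, $\iota_{v_3}$ is a monomorphism out of a non-free $\N$-semimodule, which is exactly the situation in which $-\tensor{1}\semimodule{1}$ can fail to preserve monomorphisms (the same failure that forces the hypotheses in Lemma \ref{lem:tensored.orientations}). So ``can be read off from the explicit description'' is a restatement of the problem, not a proof. To be fair, the paper's own proof is equally silent here --- it simply asserts the induced arrow --- so you have not overlooked an argument the paper supplies; but your proposed fix does not close the gap, and an honest writeup should either restrict the monicity claim to the cases handled by Lemma \ref{lem:tensored.orientations} or supply a direct argument that $\iota_v\tensor{1}1_{\sheaf{1}(v)}$ is monic for arbitrary partial $S$-semimodules $\sheaf{1}(v)$.
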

\begin{proof}
  There exists a universal natural transformation $\iota_S$ from $\orientation{S}$ making the diagram 
  \begin{equation*}
    \xymatrix@C=1pc{
        \bigoplus_{e\in E_{X}}(\langle e\rangle\subset X)_*\constantsheaf{S}\tensor{1}{\sheaf{1}}
          \ar@<.7ex>[rrrrr]^-{\bigoplus_{e}\constantsheaf{\dicohomology{0}(\delminus{.4} e\ira\langle e\rangle)}\tensor{1}1_{\sheaf{1}}}
          \ar@<-.7ex>[rrrrr]_-{\bigoplus_{e}\constantsheaf{\dicohomology{0}(\delplus{.4}e\ira\langle e\rangle)}\tensor{1}1_{\sheaf{1}}}
      & & & & & \bigoplus_{v\in V_{X}}(v\subset X)_*\constantsheaf{S}\tensor{1}{\sheaf{1}}.
    }
  \end{equation*}
  an equalizer diagram for $\sheaf{1}=\constantsheaf{S}$.
  Hence $\dicohomology{0}(X;\iota_S\tensor{1}1_{\sheaf{1}})$ induces a natural arrow to $\dihomology{1}(X;\sheaf{1})$.

  Consider the case $S$ a ring or for each $v\in V_X$, $\sheaf{1}$ is flat or $v$ has in-degree $1$ or $v$ has out-degree $1$.
  Then $\iota_S\tensor{1}\sheaf{1}$ equalizes the above diagram [Lemma \ref{lem:tensored.orientations}] and induces an arrow equalizing the diagram obtained by applying the equalizer-preserving functor $\dicohomology{0}(X;-)$.
\end{proof}

\begin{proof}[proof of Proposition \ref{prop:sd-homologie}]
  There exist natural isomorphisms
  \begin{equation*}
    \dicohomology{0}(X-C;\orientation{S}\tensor{1}\sheaf{1})
    \cong\dicohomology{0}(\sd X-\sd C;\sd\orientation{S}\tensor{1}\sd\sheaf{1}))
    \cong\dicohomology{0}(\sd X-\sd C;\orientation{S}\tensor{1}\sd\sheaf{1})
  \end{equation*}
  the first from Proposition \ref{prop:sd-cohomologie} and the second from Lemma \ref{lem:sd-orientation}.  
  The result then follows by Theorem \ref{thm:pd}.
\end{proof}

\begin{proof}[proof of Theorem \ref{thm:equalizer}]
  The natural isomorphism
  $$\dihomology{1}(X;\sheaf{1})\cong\dicohomology{0}(X;\orientation{S}\tensor{1}\sheaf{1})$$
  [Theorem \ref{thm:pd}] together with Lemma \ref{lem:pre.pd} gives the result.
\end{proof}

\section{Networks}\label{sec:mfmc}
The sheaf invariants of the previous section describe natural structures on networks.

\subsection{Constraints}\label{subsec:constraints}
Constraints on network dynamics are often local in nature.

\begin{eg}[Numerical]
  \label{eg:numerical.constraints}
  The partial $\N$-subsemimodule
  $$\{0,1,\ldots,\omega_e\}\subset\N$$
  naturally describes all possible quantities of cars on the road $e$ of a network described by an $\N$-weighted digraph $(\digraph{1};\omega)$.
\end{eg}

\begin{eg}[Multicommodities]
  \label{eg:multidimensional.constraints}
  The partial $\N$-subsemimodule
  $$\{v\in\R^2\;|\;v\cdot c\leqslant\omega_e\}\subset\R^{\geqslant 0}\times\R^{\geqslant 0}$$
  describes all possible ratios of two commodities in a supply chain described by an $\R^{\geqslant 0}$-weighted digraph $(\digraph{1};\omega)$ and vector $c\in\R^2$.
\end{eg}

Constraints of interest in information processing [\cite{ghrist2011applications}, Example \ref{eg:circuits}] typically exhibit more interesting restriction maps between cells than mere partial injections.

\begin{eg}[Information Processing]
  \label{eg:circuits}
  Let $\Lambda$ be the Boolean semiring [Example \ref{eg:flatness}].
  Free $\Lambda$-semimodules encode the possible values of bit-strings and $\Lambda$-maps encode logical operations on bit-strings.
  Hence a partial $S$-subsheaf of an $\Lambda$-sheaf encodes the local functionality of a microprocessor with logical processors at the nodes and local channel bandwidths determined by the size of generating sets for the edge stalks.
\end{eg}

Networks also come equipped with distinguished \textit{sources} and \textit{sinks}.
For convenience, this paper follows \cite{frieze1984algebraic,ghrist2011applications} in formally adjoining an edge $e$ to a digraph $X$ with a distinguished ordered pair $(s,t)$ of vertices such that $(\delplus{.4}e,\delminus{.4}e)=(s,t)$; thus a single edge $e$ in a digraph encodes both the source and sink.

\subsection{Cuts}\label{subsec:cuts}
The \textit{value} of a subset $C\subset E_X$ in a weighted digraph $(X;\omega)$ is
$$\sum_{c\in C}\omega_c.$$

\begin{defn}
  For each partial $S$-sheaf $\sheaf{1}$ on a digraph $X$, let
  $$\cutvalue{C}{\sheaf{1}}=\im\;\dicohomology{1}(C\subset X-e)\circ\delta_-:\dicohomology{0}(C;\sheaf{1})\ra\dicohomology{1}(X-e;\sheaf{1}).$$
\end{defn}

\begin{prop}[Cut values]
  \label{prop:cut-values}
  For $C$ a set of edges in a weighted digraph $(X;\omega)$,
  $$\cutvalue{C}{\orientation{S}\tensor{1}\omega}=\left(\sum_{c\in C}\omega_c\right),$$
  where $(b)$ denotes the down-set in the $S$-semimodule $M$ in which $(X;\omega)$ is weighted, with respect to the natural preorder on $M$.
\end{prop}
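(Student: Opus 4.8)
The plan is to unwind the definition of $\cutvalue{C}{-}$ at the coefficient sheaf $\orientation{S}\tensor{1}\omega$, transport the whole picture into $M$, and then read off the two inclusions. By construction $\omega$ is a partial $S$-subsheaf of $\constantsheaf{M}$ with $\omega(v)=M$ at each vertex and $\omega(f)=\{x\in M\mid x\leqslant_{M}\omega_{f}\}$ at each edge, the restriction maps being partial identities onto these down-sets; since $\orientation{S}(f)\cong S$ at every edge (Lemmas \ref{lem:generating-orientations} and \ref{lem:sd-orientation}), Proposition \ref{prop:tensors} gives $(\orientation{S}\tensor{1}\omega)(f)=\omega(f)\subseteq M=(\orientation{S}\tensor{1}\constantsheaf{M})(f)$ and realizes $\orientation{S}\tensor{1}\omega$ as a partial $S$-subsheaf of $\orientation{S}\tensor{1}\constantsheaf{M}$, so $\cutvalue{C}{\orientation{S}\tensor{1}\omega}$ sits inside $\dicohomology{1}(X-e;\orientation{S}\tensor{1}\constantsheaf{M})$. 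The degree $1$ clause of Lemma \ref{lem:HH0-universal.coefficients} identifies the latter with $M\tensor{1}\dicohomology{1}(X-e;\orientation{S})$, and, $X-e$ being acyclic, Theorem \ref{thm:pd} identifies $\dicohomology{1}(X-e;\orientation{S})$ with $\dihomology{0}(X-e;\constantsheaf{S})$, a direct sum of copies of $S$ indexed by the connected components of $X-e$. Thus the ambient is canonically a direct sum of copies of $M$, $\cutvalue{C}{\orientation{S}\tensor{1}\omega}$ lands in the summand indexed by the component meeting $C$, and the assertion becomes that this subset of $M$ is the down-set $\left(\sum_{c\in C}\omega_{c}\right)$.

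Next I would compute the connecting map. An element of $\dicohomology{0}(\langle C\rangle;\orientation{S}\tensor{1}\omega)$ (with $\langle C\rangle$ the closure of $C$) is a tuple $(\sigma_{v})_{v\in V_{\langle C\rangle}}$, $\sigma_{v}\in\orientation{S}(v)\tensor{1}M$; because the $c$-coordinate of $\orientation{S}(\delminus{.4}c\facerelation c)$ and of $\orientation{S}(\delplus{.4}c\facerelation c)$ is projection onto the coefficient of $c$ (cf. Lemma \ref{lem:primal-orientations}), the equalizer condition forces a common value $x_{c}\in\omega(c)$, i.e. $x_{c}\leqslant_{M}\omega_{c}$, for each $c\in C$. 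Chasing the two squares defining $\delta_{-}$ and then the coequalizer defining $\dicohomology{1}(X-e;-)$ — in which every contribution supported off $\langle C\rangle$ dies, precisely because $X-e$ is acyclic — this tuple is carried to the class of $\sum_{c\in C}x_{c}$ in the relevant copy of $M$. Since each $x_{c}\leqslant_{M}\omega_{c}$ and the natural preorder is compatible with addition, $\sum_{c\in C}x_{c}\leqslant_{M}\sum_{c\in C}\omega_{c}$, so $\cutvalue{C}{\orientation{S}\tensor{1}\omega}\subseteq\left(\sum_{c\in C}\omega_{c}\right)$.

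For the reverse inclusion I would realize every element of the down-set. Because $\langle C\rangle$ is a disjoint union of stars, the coefficients $x_{c}\in\omega(c)$ of a prospective section may be prescribed independently: at each vertex $v$ one extends $\sigma_{v}$ over the non-cut edges at $v$, using the generators of $\orientation{S}(v)$ from Lemma \ref{lem:primal-orientations}, so as to satisfy the one linear relation defining $\orientation{S}(v)\tensor{1}M$. Taking $x_{c}=\omega_{c}$ then puts $\sum_{c\in C}\omega_{c}$ into $\cutvalue{C}{\orientation{S}\tensor{1}\omega}$, and the computation above shows the image is exactly $\{\sum_{c\in C}x_{c}\mid x_{c}\leqslant_{M}\omega_{c}\}$, which coincides with $\left(\sum_{c\in C}\omega_{c}\right)$ by the elementary order-theoretic fact that the down-set of $\sum_{c}\omega_{c}$ in $M$ is the sumset of the down-sets of the $\omega_{c}$. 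The main obstacle will be, first, making precise that after passing to $\dicohomology{1}(X-e;-)$ the connecting map records nothing beyond $\sum_{c\in C}\omega_{c}$ — which forces careful use of the acyclicity of $X-e$ and of the structure of $\orientation{S}$ at the bifurcating vertices of $\langle C\rangle$ — and, second, the reverse inclusion, whose crux is precisely the identity ``down-set of a sum $=$ sumset of the down-sets'' for the natural preorder, together with the gluing of such damped coefficients into an honest c-section over the union of stars $\langle C\rangle$.
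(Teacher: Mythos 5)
The paper states Proposition \ref{prop:cut-values} without proof, so there is no argument of the author's to compare yours against; I can only assess your proposal on its own terms. Your reduction is the right one: unwind $\cutvalue{C}{\orientation{S}\tensor{1}\omega}$, observe that a c-section over the closure of $C$ is determined on each cut edge $c$ by a common value $x_c\in\omega(c)$, i.e.\ $x_c\leqslant_M\omega_c$, and check that the composite into $\dicohomology{1}(X-e;\orientation{S}\tensor{1}\omega)$ records the class of $\sum_{c\in C}x_c$. Granting the paper-level imprecisions you inherit --- that $\dicohomology{0}(C;-)$ must be read as sections over $\langle C\rangle$, that the comparison $\dicohomology{1}(X-e;\orientation{S}\tensor{1}\omega)\ra\dicohomology{1}(X-e;\orientation{S}\tensor{1}\constantsheaf{M})$ is a map of coequalizers whose injectivity you assume rather than prove, and that realizing prescribed values on the cut edges requires each vertex of $\langle C\rangle$ to carry non-cut edges on the opposite side (true for genuine $e$-cuts, not for arbitrary edge sets $C$ as literally allowed by the statement) --- your computation establishes that the cut value is the \emph{sumset} $\{\sum_{c\in C}x_c\;|\;x_c\leqslant_M\omega_c\}$.

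The genuine gap is the last step, which you dismiss as an ``elementary order-theoretic fact'': that this sumset equals the down-set $\left(\sum_{c\in C}\omega_c\right)$. The inclusion of the sumset in the down-set needs the natural preorder to be compatible with addition, which is clear only for the generating relations $x\leqslant_M\lambda x+y$ with $\lambda=1$; and the reverse inclusion is a Riesz-type decomposition property that fails in general. Concretely, take $S=\Lambda$ and $M$ the diamond lattice $\{0,a,b,c,1\}$ with $a,b,c$ pairwise incomparable: then $c$ lies in the down-set of $a+b=1$ but is not of the form $x_1+x_2$ with $x_1\leqslant_Ma$ and $x_2\leqslant_Mb$. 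So the proposition, read through your computation, cannot be closed for such $M$ without an additional hypothesis. The identity you need does hold when $M$ is naturally inf-semilattice ordered --- the setting in which the proposition is actually invoked in the proof of Corollary \ref{cor:mfmc} --- since then $u\leqslant_M\omega_{c_1}+\omega_{c_2}$ gives $u=u\wedge(\omega_{c_1}+\omega_{c_2})=(u\wedge\omega_{c_1})+\omega_{c_2}$, exhibiting the required decomposition. You should either impose that hypothesis and supply this one-line argument, or note explicitly that the statement as printed does not follow from the definition of $\cutvalue{C}{-}$ alone.
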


For a pair $A,B\subset V_X$ of vertices in a digraph $X$, $A:B$ denotes the set
$$A:B=(\delminus{.4}^{-1}A\cap\delplus{.4}^{-1}B)\subset E_X.$$

An \textit{$(s,t)$-cut} is a partition $A,V_X-A$ of the vertices $V_X$ such that $s\in A$ and $t\notin A$, for each choice $s,t\in V_X$ of vertices in a given digraph $X$.
Following our convention for encoding a distingushed source and sink node in a network as a single edge, an \textit{$e$-cut} will refer to a subset of $E_X$ of the form $A:V_X-A$ for $A,V_X$ an $(\delplus{.4}e,\delminus{.4}e)$-cut of a digraph $X$, for each choice of an edge $e\in E_X$ in a given digraph $X$.

\begin{lem}
  \label{lem:cuts}
  Fix $e\in E_X$ with $X-e$ acyclic.
  The following are equivalent forfinite  $C\subset X$.
  \begin{enumerate}
    \item $C$ is an $e$-cut
    \item The element in $H^1(X;\constantsheaf{S})$ represented by $e\in S[e]$ is represented by $\sum_{c\in C}c$ in $S[C]$.
  \end{enumerate}
\end{lem}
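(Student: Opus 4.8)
The plan is to describe $H^1(X;\constantsheaf{S})$ concretely and then read both conditions off the combinatorics of $X-e$. By Definition \ref{defn:compact.HH0} and Proposition \ref{prop:co.equalizers} it suffices to take $X$ compact; then $H^1(X;\constantsheaf{S})$ is the coequalizer of a pair of maps $\directsum_{v\in\vertices{X}}S\rra\directsum_{a\in\edges{X}}S$ all of whose components are identities, i.e.\ it is $S[\edges{X}]$ modulo the least $S$-semimodule congruence $\sim$ for which, at every vertex $v$, the sum $\sum_{\delminus{.4}a=v}a$ of the edges leaving $v$ is congruent to the sum $\sum_{\delplus{.4}a=v}a$ of the edges entering $v$. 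Write $s=\delplus{.4}e$ and $t=\delminus{.4}e$, and let $\preceq$ be the relation on $\vertices{X}$ generated by $\delminus{.4}a\preceq\delplus{.4}a$ for $a\in\edges{X}-e$; acyclicity of $X-e$ makes this a genuine (finite) partial order, and it is the structure along which the arguments below induct. (Throughout $C$ denotes a finite set of edges with $e\notin C$.)

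For $(1)\Rightarrow(2)$, given an $(s,t)$-partition $A$ with $C=A:(\vertices{X}-A)$, I would first replace $A$ by a $\preceq$-down-set realizing the same edge set $C$, so that $e$ is the only edge of $X$ directed from $\vertices{X}-A$ back into $A$; this normalization is where acyclicity of $X-e$ is genuinely used, since ``backward'' cut edges of a general partition would contribute extra summands. Then $\sum_{c\in C}c\sim e$ follows by summing the vertex relations over $A$, but done carefully: rather than summing all of them at once (which would leave an uncancellable $\sum_{a\in A:A}a$, there being no additive inverses), I would strip the vertices of $A$ off one at a time, each time choosing a $\preceq$-maximal vertex $w$ of the current down-set $A'$ and using only the relation at $w$ to rewrite the current cut-sum $\sum_{a\in A':(\vertices{X}-A')}a$ as the cut-sum of $A'-\{w\}$, never introducing an internal edge. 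This is an induction on $|A|$ whose base case $A=\{s\}$ is exactly the relation at $s$.

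For $(2)\Rightarrow(1)$, I would start from $\sum_{c\in C}c\sim e$ and exploit acyclicity of $X-e$ to turn a chain of congruence-moves witnessing it into a single monotone ``potential'': orienting each vertex relation along $\preceq$ gives a terminating, confluent rewriting system, and a derivation from $e$ to $\sum_{c\in C}c$ should accumulate into a function $n\colon\vertices{X}\to\{0,1\}$ with $n(s)=1$ and $n(t)=0$. I would then check that $C$ is precisely the set of edges along which $n$ drops from $1$ to $0$, and hence that $A=n^{-1}(1)$ is an $(s,t)$-partition with $A:(\vertices{X}-A)=C$, which gives $(1)$.

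The delicate point, and the step I expect to be the main obstacle, is $(2)\Rightarrow(1)$ — specifically, showing that the potential extracted from the congruence is two-valued on the edges of $C$, i.e.\ that $C$ is a single cut rather than a superposition of several level crossings of a potential with extra plateaus, and that $C$ carries no ``phantom'' edges that vanish in $H^1$. Acyclicity of $X-e$ is exactly what prevents such a potential from wandering, so this is where that hypothesis is indispensable; by contrast, once the down-set normalization is in place, $(1)\Rightarrow(2)$ is a finite manipulation of the defining relations.
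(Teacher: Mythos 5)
Your overall strategy matches the paper's: the paper's entire proof is the one-sentence remark that the lemma follows by induction on the length of a maximal chain of $X-e$ with respect to the partial order generated by $\delminus{.4}a\leqslant\delplus{.4}a$, and your presentation of $H^1(X;\constantsheaf{S})$ as $S[E_X]$ modulo the vertex relations, together with the peeling of a $\preceq$-maximal vertex off a down-set, is a correct and usefully explicit implementation of that induction; your refusal to sum all the vertex relations over $A$ at once (no cancellation in a semimodule) is exactly the right instinct. The gap in $(1)\Rightarrow(2)$ is the normalization step: an $e$-cut $A:(V_X-A)$ need not be realizable by a $\preceq$-down-set, and for partitions with backward edges the congruence fails. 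Take $X-e$ to be the path $s\xra{a}u\xra{b}v\xra{c}t$ together with an edge $d:s\to t$, and $A=\{s,v\}$; then $C=A:(V_X-A)=\{a,c,d\}$ is an $e$-cut in the paper's sense, but $H^1(X;\constantsheaf{\N})$ is free on $\{a,d\}$ with $b,c\mapsto a$ and $e\mapsto a+d$, so $\sum_{c'\in C}c'\mapsto 2a+d\neq a+d$. Either the implication must be restricted to minimal cuts (equivalently, to down-set partitions, which is how Theorem \ref{thm:mfmc} uses the lemma) or your argument halts at the normalization. Also, the base case of your peeling is the principal down-set of $s$ rather than $\{s\}$: if $s$ has in-edges in $X-e$ then $\{s\}$ is not a down-set and the relation at $s$ reads $\sum\mathrm{out}(s)\sim e+\sum(\text{other in-edges of $s$})$ rather than $\sum\mathrm{out}(s)\sim e$.

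For $(2)\Rightarrow(1)$ you have correctly located the hard direction, but what you give is a plan, not a proof: termination and confluence of the oriented rewriting system, and the claim that a derivation of $\sum_{c\in C}c$ from $e$ accumulates into a two-valued potential, are precisely the content that needs to be established. Moreover the truth of $(2)$ depends on $S$, so no potential argument that never consults the semiring can succeed: in the example above, over the Boolean semiring $\Lambda$ one has $2a+d=a+d$, so $C'=\{a,b,d\}$ satisfies $(2)$ although it is not an $e$-cut (it would force $u\in A$ and $u\notin A$ simultaneously). The paper's own proof is only the induction slogan, so the sketchiness is forgivable, but the two examples show the implication cannot be completed exactly as you (or the lemma) state it; a finished argument has to restrict $(1)$ to minimal cuts and either restrict $S$ or prove that $S[E_X]/\!\sim$ is free enough (as it is over $\N$ in the example) for the counting in your potential argument to be meaningful.
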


A proof is given by induction on the length of maximal chain of $X-e$, regarded as a poset whose partial order $\leqslant_{X-e}$ is generated by relations of the form $\delminus{.4}e\leqslant_{X-e}\delplus{.4}e$.

\begin{eg}
  \label{eg:cuts}
  In each of the graphs, the dashed edges define $e$-cuts.
  \vspace{.1in}
  \begin{center}
  \begin{tikzpicture}[->,>=stealth',shorten >=1pt,auto,node distance=1cm,
                    thick,main node/.style={circle,fill=red!10,draw,font=\Large\bfseries}]
  \coordinate (0);
  \node[main node,fill=blue] (1) [right of =0] {};
  \node[main node,fill=blue] (2) [above right of =1] {};
  \node[main node,fill=blue] (3) [right of =2] {};
  \node[main node,fill=blue] (4) [below right of =3] {};
  \node[main node,fill=blue] (5) [below right of =1] {};
  \node[main node,fill=blue] (6) [right of =5] {};
  \coordinate [right of =4] (7);
 \path[every node/.style={font=\small}]
    (0) edge node [sloped, pos=.5] {$e$} (1)
    (1) edge node [right] {} (2)
    (2) edge node [right] {} (3)
    (5) edge node [right] {} (6)
    (5) edge node [right] {} (3)
    (6) edge node [right] {} (4)
    (4) edge node [sloped, pos=.5]{$e$} (7);
  \path[dashed]
    (3) edge node [right] {} (4)
    (1) edge node [right] {} (5);
  \coordinate [right of =7] (8);
  \node[main node,fill=yellow] (9) [right of =8] {};
  \node[main node,fill=yellow] (10) [right of =9] {};
  \coordinate [right of =10] (11);
  \path[every node/.style={font=\small}]
    (8) edge node [sloped, pos=.5] {$e$} (9)
    (10) edge node [sloped, pos=.5] {$e$} (11);
  \path[dashed]
    (10) edge node [right] {} (9);
  \coordinate [right of =11] (12);
  \node[main node,fill=red] (13) [right of =12] {};
  \node[main node,fill=red] (14) [below of =13] {};
  \node[main node,fill=red] (15) [right of =13] {};
  \node[main node,fill=red] (17) [above of =15] {};
  \coordinate [right of =15] (16);
 \path[every node/.style={font=\small}]
    (12) edge node [sloped, pos=.5] {$e$} (13)
    (15) edge node [sloped, pos=.5] {$e$} (16);
  \path[dashed]
    (13) edge node [] {} (15)
    (15) edge node [] {$a$} (17)
    (14) edge node [] {$a$} (13);
  \end{tikzpicture} 
  \end{center} 
\end{eg}

\subsection{Flows}\label{subsec:flows}
An \textit{$\omega$-flow} on a digraph $(X;\omega)$ is a function
$$\phi:\edges{\digraph{1}}\ra M$$
to the commutative monoid $M$ of edge weights satisfying the following pair of conditions:
\begin{enumerate}
  \item\label{item:capacity.constraint} $\phi(e)\leqslant_{M}\omega_e$ for each $e\in E_X$
  \item\label{item:conservation.law} $\sum_{e\in\delminus{.4}^{-1}(v)}\phi(e)=\sum_{e\in\delplus{.4}^{-1}(v)}\phi(e)$ for each $v\in V_X$.
\end{enumerate}

Such flows on weighted digraphs generalize to sheaf-valued flows in the following sense.
Condition (\ref{item:capacity.constraint}) generalizes to the structure of a partial $S$-sheaf.
Condition (\ref{item:conservation.law}) generalizes to an equalizer diagram.

\begin{defn}
  An \textit{$\sheaf{1}$-flow} is an element in the equalizer of the diagram
  \begin{equation}
  \label{eqn:flow-defn}
  \xymatrix@C=2pc{
    \prod_{e\in\edges{\digraph{1}}}\dicohomology{0}(\langle e\rangle;\sheaf{1})
      \ar@<.7ex>[rrrr]^-{\dicohomology{0}(\delminus{.4} e\subset\langle e\rangle)}
      \ar@<-.7ex>[rrrr]_-{\dicohomology{0}(\delplus{.4}e\subset\langle e\rangle)}
  & & & & \pseudoprod_{v\in\vertices{\digraph{1}}}\sheaf{1}(v),
  }
  \end{equation}
  where $\prod$ denotes the Cartesian product of underlying sets equipped with coordinate-wise operations, for each partial $S$-sheaf $\sheaf{1}$ on a locally finite digraph $X$.
  The \textit{support}, written $|\phi|$, of an $\sheaf{1}$-flow $\phi$ is the union of $\langle e\rangle$ for all $e\in E_X$ with the $e$-indexed of $\phi$ in $\dicohomology{0}(\langle e\rangle;\sheaf{1})$ non-zero.
  An $\sheaf{1}$-flow $\phi$ is \textit{finite} if $|\phi|$ is finite and \textit{$e$-acyclic} if $|\phi|-e$ is acyclic.
  An $\sheaf{1}$-flow is \textit{locally $S$-decomposable} if it lifts to an $\sheaf{1}_0$-flow for $\sheaf{1}_0\ra\sheaf{1}$ a natural transformation from a flat partial $S$-sheaf $\sheaf{1}_0$.
\end{defn}


\begin{prop}
  \label{prop:flows}
  For each partial $S$-sheaf $\sheaf{1}$ on a digraph $X$, 
  \begin{equation}
    \label{eqn:flows}
    \dihomology{1}(X;\sheaf{1})=\substack{\text{finite and}\\\text{locally decomposable}\\\text{$\sheaf{1}$-flows}}
  \end{equation}
  naturally.
  The above partial $S$-semimodule contains all finite $\sheaf{1}$-flows if $S$ is a ring or for each $v\in V_X$, the in-degree of $v$ is $1$, the out-degree of $v$ is $1$, or $\sheaf{1}(v)$ is flat.
\end{prop}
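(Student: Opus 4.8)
The plan is to compare both sides of the claimed identity inside the ambient partial $S$-semimodule $\directsum_{e\in E_X}\dicohomology{0}(\langle e\rangle;\sheaf{1})$, using the equalizer presentation of $\dihomology{1}$ that is available for flat coefficients together with a projective resolution to descend to arbitrary coefficients.

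First I would assemble, from Theorem~\ref{thm:pd} with $U=\varnothing$ (which gives a natural isomorphism $\dihomology{1}(X;\sheaf{1})\cong\dicohomology{0}(X;\orientation{S}\tensor{1}\sheaf{1})$) and Lemma~\ref{lem:pre.pd}, a natural monomorphism $\Phi_{\sheaf{1}}\colon\dihomology{1}(X;\sheaf{1})\hookrightarrow\directsum_{e\in E_X}\dicohomology{0}(\langle e\rangle;\sheaf{1})$ whose image equalizes the two parallel boundary arrows of Theorem~\ref{thm:equalizer}. Comparing those arrows with the maps occurring in the definition of an $\sheaf{1}$-flow shows that a finitely supported tuple is equalized precisely when it satisfies the flow-conservation law at every vertex --- each vertex sum being finite because the tuple lies in the direct sum --- so $\im\Phi_{\sheaf{1}}$ is contained in the set of finite $\sheaf{1}$-flows. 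When $\sheaf{1}$ is objectwise flat, Theorem~\ref{thm:equalizer} says this is moreover an equalizer diagram, so there $\Phi_{\sheaf{1}}$ identifies $\dihomology{1}(X;\sheaf{1})$ with the set of \emph{all} finite $\sheaf{1}$-flows; this also settles the final clause of the proposition, since those hypotheses ($S$ a ring, or every vertex of in-degree $1$, out-degree $1$, or flat stalk) are exactly the ones making the diagram an equalizer.

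To treat a general $\sheaf{1}$, I would resolve it: by Lemma~\ref{lem:cogodement} and enough projectives in $\CONSTRAINTS$, choose a c-sectionwise coequalizer diagram $\sheaf{1}_1\rra\sheaf{1}_0\xra{\epsilon}\sheaf{1}$ with $\sheaf{1}_0,\sheaf{1}_1$ direct sums of pushforwards of constant partial $S$-sheaves at projective partial $S$-semimodules. Such $\sheaf{1}_0$ is objectwise projective, hence flat, so $\dihomology{1}(X;\sheaf{1}_0)$ is the set of finite $\sheaf{1}_0$-flows by the previous paragraph, and by definition (Proposition~\ref{prop:H1}, and the image computation in its proof) $\dihomology{1}(X;\sheaf{1})$ is the image of $\dihomology{1}(X;\sheaf{1}_0)$ under $\dihomology{1}(X;\epsilon)$. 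Naturality of $\Phi$ then identifies $\im\Phi_{\sheaf{1}}$ with the set of edgewise $\epsilon$-images of finite $\sheaf{1}_0$-flows; each such image is again a finite $\sheaf{1}$-flow (its support shrinks under $\epsilon$) which, by construction, lifts to a finite --- in particular, to an --- $\sheaf{1}_0$-flow over the flat sheaf $\sheaf{1}_0$, and so is a finite, locally $S$-decomposable $\sheaf{1}$-flow. This gives the inclusion $\dihomology{1}(X;\sheaf{1})\subseteq\{\text{finite, locally decomposable }\sheaf{1}\text{-flows}\}$, naturally in $\sheaf{1}$.

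The reverse inclusion is where the real work lies. Given a finite $\sheaf{1}$-flow $\phi$ lifting to an $\sheaf{2}$-flow $\chi$ along some natural transformation $\sheaf{2}\to\sheaf{1}$ with $\sheaf{2}$ flat, one must produce a \emph{finite} lift; calling it $\chi_0$ after transporting to the resolution $\sheaf{1}_0\to\sheaf{1}$, one then has $\chi_0\in\dihomology{1}(X;\sheaf{1}_0)$ and $\phi=\dihomology{1}(X;\epsilon)(\chi_0)\in\dihomology{1}(X;\sheaf{1})$, so $\phi\in\im\Phi_{\sheaf{1}}$ and the two sides coincide. The main obstacle is precisely that the stated definition of local decomposability permits a priori infinitely supported lifts, while $\dihomology{1}$ of a flat coefficient sheaf registers only finite flows; I expect this to be overcome by a compactness argument --- writing the flat $\sheaf{2}$ as a filtered colimit of finitely generated free sheaves, using that $\dihomology{1}(X;-)$ commutes with such colimits, and pulling $\chi$ back along the c-sectionwise surjection $\sheaf{1}_0\to\sheaf{1}$ --- to cut an infinite lift of the finite flow $\phi$ down to a finite one. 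With that step in hand the equality, and hence the proposition, follows.
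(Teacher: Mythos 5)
Your overall route is the same as the paper's: the final clause and the flat case come straight from Theorem \ref{thm:equalizer} (whose proof in the paper is itself Theorem \ref{thm:pd} plus Lemma \ref{lem:pre.pd}, so your first paragraph merely re-derives that theorem rather than citing it), and the general case is handled by a c-sectionwise surjection $\sheaf{1}_0\ra\sheaf{1}$ from an objectwise projective, hence flat, sheaf, identifying $\dihomology{1}(X;\sheaf{1})$ with the image of the finite $\sheaf{1}_0$-flows. That is the entire content of the paper's two-sentence proof, and your first two paragraphs carry it out correctly and in more detail.

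Where you diverge is the third paragraph. The finiteness-of-the-lift issue you isolate is genuine under a literal reading of the definition of local $S$-decomposability (the lift to a flat $\sheaf{2}$ is not required to be finite, while $\dihomology{1}(X;\sheaf{2})$ for flat $\sheaf{2}$ records only finitely supported flows), and the paper's proof simply does not engage with it: it asserts the identification of the image with the finite, locally decomposable flows without comment. Two remarks on your proposed repair. First, once a \emph{finite} lift $\chi$ along some flat $\sheaf{2}\ra\sheaf{1}$ is in hand, you do not need to transport it to the chosen resolution $\sheaf{1}_0$: by Theorem \ref{thm:equalizer} $\chi$ lies in $\dihomology{1}(X;\sheaf{2})$, and naturality of $\dihomology{1}(X;-)$ in the coefficient sheaf sends it to $\phi$ inside $\dihomology{1}(X;\sheaf{1})$; this is cleaner than factoring through $\sheaf{1}_0$. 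Second, the compactness argument you sketch is aimed at the wrong source of infinitude: writing a flat semimodule as a filtered colimit of finitely generated free semimodules controls the size of the \emph{coefficients}, not the size of the \emph{support} of the lift over an infinite digraph, and the actual difficulty is that truncating an infinite lift to $\langle|\phi|\rangle$ can violate conservation at the boundary vertices of the support. So either one reads the definition as requiring a finite lift --- in which case your third paragraph collapses to the functoriality remark above and the proof is complete --- or one must actually manufacture a finite lift from an infinite one; your sketch does not yet do the latter, and neither does the paper.
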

\begin{proof}
  The second statement follows from Theorem \ref{thm:equalizer} because finite direct sums are Cartesian products of underlying sets equipped with coordinate-wise operations.

  Hence the left side of (\ref{eqn:flows}) is naturally the image of $\sheaf{1}_0$-flows under a natural partial $S$-homomorphism from $\sheaf{1}_0$-flows to $\sheaf{1}$-flows induced by a c-sectionwise surjection $\sheaf{1}_0\ra\sheaf{1}$ with $\sheaf{1}_0$ objectwise projective and hence flat.
\end{proof}

\begin{eg}[Indecomposability]
  \label{eg:bifurcations}
  Given the sup-semilattice $\Lambda$ having Hasse diagram in Example \ref{eg:flatness} and digraph $X$ given below, the $\constantsheaf{\Lambda}$-flow illustrated below is not locally $\N$-decomposable and does not lie in $\dihomology{1}(X;\constantsheaf{\Lambda})$.
  \label{fig:bifurcations}
  \begin{center}
  \begin{tikzpicture}[->,>=stealth',shorten >=1pt,auto,node distance=1.5cm,
                    thick,main node/.style={circle,fill=yellow!20,draw,font=\sffamily\small}]

  \node[main node] (1) {};
  \node[main node] (2) [below right of=1] {};
  \node[main node] (3) [below left of=2] {};
  \node[main node] (4) [above right of=2] {};
  \node[main node] (5) [below right of=2] {};
  \node[main node] (6) [below left of=1] {$v$};
  \node[main node] (7) [below right of=4] {$v$};

  \path[every node/.style={font=\small}]
    (1) edge node [sloped, pos=.1] {$\lambda_1$} (2)
    (3) edge node [sloped, pos=.9] {$\lambda_2$} (2)
    (2) edge node [sloped, pos=.9] {$\lambda_3$} (4)
    (2) edge node [sloped, pos=.1] {$\lambda_4$} (5)
    (6) edge node [sloped, pos=.9] {$\lambda_1$} (1)
    (6) edge node [sloped, pos=.1] {$\lambda_2$} (3)
    (4) edge node [sloped, pos=.1] {$\lambda_3$} (7)
    (5) edge node [sloped, pos=.9] {$\lambda_4$} (7);
  \end{tikzpicture} 
  \end{center} 
\end{eg}


\subsection{MFMC}\label{subsec:mfmc}
Henceforth fix an edge $e\in E_X$.
A decomposition of the values of $\sheaf{1}$-flows in terms of $\sheaf{1}$-values of cuts generalizes MFMC.
This decomposition is really a \textit{homotopy limit}.
In order to make the analogy with classical homotopy theory clear, the homotopy limit defined by the proposition below can be interpreted as a right Kan extension to a suitable localization of generalized resolutions (kernel-pairs) of partial $S$-sheaves.

\begin{prop}
  \label{prop:holim}
  There exists a terminal natural transformation from a functor
  \begin{equation}
    \label{eqn:holim}
      \holim_{C}\cutvalue{C}{-}:\Sh_{X;\CONSTRAINTS}\ra\CONSTRAINTS
  \end{equation}
  to $\bigcap_C\cutvalue{C}{-}$, where $C$ denotes an element in a given collection of subsets of $X$, terminal among all such natural transformations from functors sending c-sectionwise surjections to surjections.
  For $\sheaf{1}$ a direct sum in $\Sh_{X;\CONSTRAINTS}$ of pushforwards of constant sheaves, $\holim_{C}\cutvalue{C}{\sheaf{1}}=\bigcap_C\cutvalue{C}{\sheaf{1}}$.
\end{prop}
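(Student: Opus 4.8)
The plan is to realize $\holim_{C}\cutvalue{C}{-}$ as an image along a projective resolution, exactly in the spirit of Proposition~\ref{prop:H1}, and then to read off the stated universal property; throughout, $C$ ranges over the fixed collection of subsets of $X$. First I would record that $\sheaf{1}\mapsto\cutvalue{C}{\sheaf{1}}$ is a subfunctor of $\dicohomology{1}(X-e;-)$: a partial $S$-homomorphism $\sheaf{1}\ra\sheaf{2}$ induces compatible maps on $\dicohomology{0}(C;-)$ and $\dicohomology{1}(X-e;-)$, hence carries the image $\cutvalue{C}{\sheaf{1}}$ of $\dicohomology{1}(C\subset X-e)\circ\connectinghom_-$ into $\cutvalue{C}{\sheaf{2}}$, and therefore so is the intersection $\bigcap_{C}\cutvalue{C}{-}$, formed inside $\dicohomology{1}(X-e;-)$. (That $\CONSTRAINTS$ has images and intersections of partial $S$-subsemimodules follows from Propositions~\ref{prop:partial.homomorphisms} and~\ref{prop:co.equalizers}.) For each partial $S$-sheaf $\sheaf{1}$, choose --- as in Proposition~\ref{prop:H1}, using Lemma~\ref{lem:cogodement} together with enough projectives in $\CONSTRAINTS$ --- a c-sectionwise surjection $p\colon\sheaf{1}_0\ra\sheaf{1}$ whose source is a direct sum of pushforwards of constant sheaves of projective partial $S$-semimodules, and set
\[
  \holim_{C}\cutvalue{C}{\sheaf{1}}\ =\ \im\left(\bigcap_{C}\cutvalue{C}{\sheaf{1}_0}\ \longrightarrow\ \bigcap_{C}\cutvalue{C}{\sheaf{1}}\right)\ \subseteq\ \bigcap_{C}\cutvalue{C}{\sheaf{1}},
\]
the map being the one induced by $p$.

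Next I would check independence of $p$ and functoriality. Given a second such resolution $p'\colon\sheaf{1}_0'\ra\sheaf{1}$, projectivity of the summands of $\sheaf{1}_0$ --- invoked exactly as in Propositions~\ref{prop:tensors} and~\ref{prop:H1} --- produces a natural transformation $\eta\colon\sheaf{1}_0\ra\sheaf{1}_0'$ with $p'\circ\eta=p$, so the map induced by $p$ factors through that induced by $p'$ and hence has image contained in the latter's; by symmetry the two images coincide. The same lifting, applied to a morphism $\sheaf{1}\ra\sheaf{2}$ together with chosen resolutions of source and target, shows that $\bigcap_{C}\cutvalue{C}{\sheaf{1}}\ra\bigcap_{C}\cutvalue{C}{\sheaf{2}}$ carries $\holim_{C}\cutvalue{C}{\sheaf{1}}$ into $\holim_{C}\cutvalue{C}{\sheaf{2}}$, so $\holim_{C}\cutvalue{C}{-}$ is a subfunctor of $\bigcap_{C}\cutvalue{C}{-}$ and the inclusion is a natural transformation. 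Finally $\holim_{C}\cutvalue{C}{-}$ sends a c-sectionwise surjection $\epsilon\colon\sheaf{1}\ra\sheaf{2}$ to a surjection: if $p\colon\sheaf{1}_0\ra\sheaf{1}$ is a chosen resolution then $\epsilon\circ p$ is again a c-sectionwise surjection from an admissible source, whence $\holim_{C}\cutvalue{C}{\sheaf{2}}=\im(\bigcap_{C}\cutvalue{C}{\sheaf{1}_0}\ra\bigcap_{C}\cutvalue{C}{\sheaf{2}})=\im(\holim_{C}\cutvalue{C}{\sheaf{1}}\ra\bigcap_{C}\cutvalue{C}{\sheaf{2}})$, and the last map is the one induced by $\epsilon$.

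For the universal property, let $F\colon\Sh_{X;\CONSTRAINTS}\ra\CONSTRAINTS$ be any functor sending c-sectionwise surjections to surjections and $\theta\colon F\Rightarrow\bigcap_{C}\cutvalue{C}{-}$ a natural transformation. Since $\holim_{C}\cutvalue{C}{-}\ira\bigcap_{C}\cutvalue{C}{-}$ is objectwise monic, any factorization of $\theta$ through it is unique; for existence, fix $\sheaf{1}$ with chosen resolution $p\colon\sheaf{1}_0\ra\sheaf{1}$, observe that naturality forces $\theta_{\sheaf{1}}\circ F(p)$ to land in the image of $\bigcap_{C}\cutvalue{C}{\sheaf{1}_0}$, and use surjectivity of $F(p)$ to conclude $\im\theta_{\sheaf{1}}\subseteq\holim_{C}\cutvalue{C}{\sheaf{1}}$; the resulting corestrictions are natural because the inclusion is monic. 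Thus $\holim_{C}\cutvalue{C}{-}$ with its inclusion into $\bigcap_{C}\cutvalue{C}{-}$ is terminal among such pairs. The last assertion is then immediate: when $\sheaf{1}$ is itself a direct sum of pushforwards of constant sheaves of projective partial $S$-semimodules (as in Definition~\ref{defn:homologie}), the identity $1_{\sheaf{1}}$ is an admissible resolution, so $\holim_{C}\cutvalue{C}{\sheaf{1}}$ is the image of the identity on $\bigcap_{C}\cutvalue{C}{\sheaf{1}}$, namely $\bigcap_{C}\cutvalue{C}{\sheaf{1}}$ itself.

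The only real obstacle is infrastructural rather than conceptual: one must confirm that $\CONSTRAINTS$ genuinely supports the images and subobject intersections used above (Propositions~\ref{prop:partial.homomorphisms}, \ref{prop:co.equalizers}) and that admissible resolutions lift against one another --- the projectivity mechanism already exploited in Propositions~\ref{prop:tensors} and~\ref{prop:H1}. Everything beyond that bookkeeping is formal diagram-chasing.
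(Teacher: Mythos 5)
Your construction and verification of the main claim --- defining $\holim_C\cutvalue{C}{\sheaf{1}}$ as the image of $\bigcap_C\cutvalue{C}{\sheaf{1}_0}$ under the map induced by a chosen c-sectionwise surjection $\sheaf{1}_0\ra\sheaf{1}$ from a direct sum of pushforwards of constant sheaves at projectives, proving independence of the choice and functoriality by the projective lifting-and-symmetry argument, and deducing terminality from surjectivity of $F(p)$ together with naturality of $\theta$ --- is essentially identical to the paper's proof, and your treatment of the terminality step is more explicit than the paper's one-line conclusion.

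The one place where you fall short of the statement is the final assertion. The proposition asserts $\holim_C\cutvalue{C}{\sheaf{1}}=\bigcap_C\cutvalue{C}{\sheaf{1}}$ for $\sheaf{1}$ a direct sum of pushforwards of constant sheaves, with no projectivity hypothesis on the stalks; you silently insert ``of projective partial $S$-semimodules'' so that the identity becomes an admissible resolution, which proves strictly less than what is claimed. The paper closes this gap (for flat stalks, at least) by a different mechanism: it invokes Lemma \ref{lem:HH0-universal.coefficients} to obtain $\bigcap_C\cutvalue{C}{\constantsheaf{M}}\cong\left(\bigcap_C\cutvalue{C}{\constantsheaf{S}}\right)\tensor{1}M$ naturally in flat $M$, and then uses that tensoring preserves epimorphisms [Proposition \ref{prop:tensors}] to conclude that the map induced by the resolution is already surjective onto $\bigcap_C\cutvalue{C}{\constantsheaf{M}}$, i.e.\ $L\constantsheaf{M}=I\constantsheaf{M}$ without $M$ being projective. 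To repair your argument you would need this universal-coefficients step or an equivalent; the identity-resolution trick alone does not reach non-projective constant coefficients, and those are exactly the sheaves produced by Lemma \ref{lem:cogodement} and consumed in the proof of Theorem \ref{thm:mfmc}.
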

\begin{proof}
  Let $\sheaf{1}$ denote a partial $S$-sheaf and $\mathcal{P}$ denote a direct sum in $\Sh_{X;\CONSTRAINTS}$ of pushforwards of constant sheaves at projective partial $S$-semimodules. 
 
  Let $I$ be the functor $\Sh_{X;\CONSTRAINTS}\ra\CONSTRAINTS$ naturally defined on objects by
  $$I\sheaf{1}=\bigcap_C\cutvalue{C}{\sheaf{1}}.$$
 
  The image $L\sheaf{1}$ of $I\sheaf{1}_0$ under $\dicohomology{1}(X;\eta)$ is independent of a choice $\eta$ of c-sectionwise surjections of the form $\mathcal{P}\ra\sheaf{1}$; for two such c-sectionwise surjections $\epsilon':\mathcal{P}'\ra\sheaf{1}$ and $\epsilon'':\mathcal{P}''\ra\sheaf{1}$, there exists a natural transformation $\eta:\mathcal{P}'\ra\mathcal{P}''$ with $\epsilon''\circ\eta=\epsilon'$, hence $\dicohomology{1}(X;\epsilon')(I\mathcal{P}')\subset\dicohomology{1}(X;\epsilon'')(I\mathcal{P}'')$, and hence also $\dicohomology{1}(X;\epsilon'')(I\mathcal{P}'')\subset\dicohomology{1}(X;\epsilon')(I\mathcal{P}')$ by symmetry.
  Hence $L$ extends a functor 
  $$L:\Sh_{X;\CONSTRAINTS}\ra\CONSTRAINTS$$
  defined on morphisms $\eta$ as restrictions and corestrictions of $I\eta$, and hence natural transformation $\iota:L\ra I$ defined component-wise by inclusion.
  
  For each flat $\semiring{1}$-semimodule $M$, $I\constantsheaf{M}\cong I\constantsheaf{\semiring{1}}\tensor{1}M$ naturally in $M$ [Lemma \ref{lem:HH0-universal.coefficients}], and hence $L\constantsheaf{M}=I\constantsheaf{M}$ because $-\tensor{1}M$ preserves epis [Proposition \ref{prop:tensors}].
  Hence $L\mathcal{P}\cong I\mathcal{P}$ naturally in $\mathcal{P}$.

  For each partial $S$-sheaf $\sheaf{1}$, there exists a c-sectionwise surjection of the form $\mathcal{P}\ra\sheaf{1}$ [Lemma \ref{lem:cogodement}].
  Hence $\iota$ is terminal among all natural transformations to $I$ from a functor sending c-sectionwise surjections to surjections.
\end{proof}

The following theorem expresses feasible flow-values as a homotopy limit of cut-values.

\begin{thm}[Sheaf-Theoretic MFMC]
  \label{thm:mfmc}
  The equality
  \begin{equation}
    \label{eqn:mfmc}
    \flowvalues{e}{X}{\sheaf{1}}=\holim\;\!\!_C\cutvalue{C}{\orientation{S}\tensor{1}\sheaf{1}}\subset\bigcap_C\cutvalue{C}{\orientation{S}\tensor{1}\sheaf{1}},
  \end{equation}
  where the homotopy limit is taken over all minimal $e$-cuts $C$ and the inclusion is an equality for the case $\dihomology{1}(-;\sheaf{1})$ exact at $e$, holds for the following data.
  \begin{enumerate}
    \item cellular sheaf $\sheaf{1}$ of $\semiring{1}$-semimodules on $X$
    \item edge $e$ in $X$ with $X-e$ acyclic
  \end{enumerate}
\end{thm}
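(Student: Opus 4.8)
The plan is to transport the statement into the directed cohomology of the orientation-twisted sheaf by Poincar\'{e} duality, reduce to flat coefficients by a resolution argument, and then prove the resulting combinatorial flow--cut identity by induction on the acyclic poset $X-e$. First I would apply Theorem \ref{thm:pd} with $U=\varnothing$: its top arrow is the natural isomorphism $\dicohomology{0}(X;\orientation{S}\tensor{1}\sheaf{1})\cong\dihomology{1}(X;\sheaf{1})$, and by Proposition \ref{prop:flows} the right-hand side is the partial $S$-semimodule of finite, locally $S$-decomposable $\sheaf{1}$-flows. Writing $\mathcal{G}=\orientation{S}\tensor{1}\sheaf{1}$, the $e$-value of a flow, recorded modulo parallel transport up to orientation exactly as in the definition of $\cutvalue{C}{-}$, is given by a natural partial $S$-homomorphism $\mathrm{ev}\colon\dicohomology{0}(X;\mathcal{G})\ra\dicohomology{1}(X-e;\mathcal{G})$ that factors through $\dihomology{1}((X,\{e\});\sheaf{1})$, and under the identifications above $\flowvalues{e}{X}{\sheaf{1}}=\im\,\mathrm{ev}$. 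Since each $\cutvalue{C}{\mathcal{G}}$ is by construction a subobject of $\dicohomology{1}(X-e;\mathcal{G})$, both sides of (\ref{eqn:mfmc}) live there, and I must prove $\im\,\mathrm{ev}=\holim_C\cutvalue{C}{\mathcal{G}}$ together with $\holim_C\cutvalue{C}{\mathcal{G}}\subseteq\bigcap_C\cutvalue{C}{\mathcal{G}}$, the last with equality under exactness at $e$.

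Next I would choose a c-sectionwise surjection $\epsilon\colon\mathcal{P}\ra\mathcal{G}$ with $\mathcal{P}$ a direct sum of pushforwards of constant partial $S$-sheaves at projective partial $S$-semimodules (Lemma \ref{lem:cogodement}, resolving the coefficient semimodules further by free ones if needed); such $\mathcal{P}$ is objectwise projective, hence flat. Because $X$ is open in itself, $\dicohomology{0}(X;\epsilon)$ is surjective, so $\im\,\mathrm{ev}$ is the image under $\dicohomology{1}(X-e;\epsilon)$ of $\im\big(\mathrm{ev}\colon\dicohomology{0}(X;\mathcal{P})\ra\dicohomology{1}(X-e;\mathcal{P})\big)$; and by the argument proving Proposition \ref{prop:holim}, $\holim_C\cutvalue{C}{\mathcal{G}}$ is the image under the same map of $\bigcap_C\cutvalue{C}{\mathcal{P}}$, which by Proposition \ref{prop:holim} already equals $\holim_C\cutvalue{C}{\mathcal{P}}$. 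Hence the main equality in (\ref{eqn:mfmc}) reduces to proving, for flat $\mathcal{P}$, that $\im\big(\mathrm{ev}\colon\dicohomology{0}(X;\mathcal{P})\ra\dicohomology{1}(X-e;\mathcal{P})\big)=\bigcap_C\cutvalue{C}{\mathcal{P}}$, the intersection over minimal $e$-cuts $C$, and then applying $\dicohomology{1}(X-e;\epsilon)$ to both sides.

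The heart of the argument is this flat case, and it is the step I expect to be the main obstacle. Since $\mathrm{ev}$ and each $\cutvalue{C}{-}$ commute with direct sums and, by Theorem \ref{thm:flatness}, with the filtered colimits of finitely generated free $S$-semimodules building up $\mathcal{P}$, I may take $\mathcal{P}=(D\subset X)_*\constantsheaf{S}$ for a subset $D\subset X$. The inclusion ``$\subseteq$'' is weak duality: by Lemma \ref{lem:cuts} the canonical $1$-cocycle of $\constantsheaf{S}$ supported at $e$ is cohomologous to $\sum_{c\in C}c$ for every $e$-cut $C$, so $\mathrm{ev}$ factors through $\dicohomology{0}(C;\mathcal{P})$ and its image lies in $\cutvalue{C}{\mathcal{P}}$. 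For ``$\supseteq$'' I would exploit that acyclicity of $X-e$ makes its edge relation generate a partial order of finite height (after restricting to the compact subdigraphs in the colimit defining $\dicohomology{\grading}$); given a class in $\bigcap_C\cutvalue{C}{\mathcal{P}}$ one constructs a representing $\mathcal{P}$-flow by induction on this height, extending a partial flow across each successive antichain of $X-e$, where at every stage minimality of the binding $e$-cut together with membership in all the $\cutvalue{C}{\mathcal{P}}$ supplies the needed local value inside $\mathcal{P}$ and flatness of $\mathcal{P}$ permits this augmentation without additive inverses. This augmentation step is precisely where the argument carries out the classical Max-Flow Min-Cut reasoning in the semimodule setting.

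Finally, the inclusion $\holim_C\cutvalue{C}{\mathcal{G}}\subseteq\bigcap_C\cutvalue{C}{\mathcal{G}}$ is immediate, since $\dicohomology{1}(X-e;\epsilon)$ carries $\bigcap_C\cutvalue{C}{\mathcal{P}}$ into $\cutvalue{C}{\mathcal{G}}$ for each $C$. When $\dihomology{1}(-;\sheaf{1})$ is exact at $e$ (Definition \ref{defn:exactness}), the image of $\dihomology{1}(X;\sheaf{1})\ra\dihomology{1}((X,\{e\});\sheaf{1})$ coequalizes $\connectinghom_-,\connectinghom_+$; reading this condition through the identifications of Theorem \ref{thm:pd} with $U=\{e\}$, whose bottom and connecting arrows are natural in $\sheaf{1}$, and through Lemma \ref{lem:cuts}, exhibits $\flowvalues{e}{X}{\sheaf{1}}$ as the full subobject of $\dicohomology{1}(X-e;\mathcal{G})$ carved out by the cut conditions, that is as $\bigcap_C\cutvalue{C}{\mathcal{G}}$, so the displayed inclusion becomes an equality. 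Absent exactness there may be a class lying in every $\cutvalue{C}{\mathcal{G}}$ but not in $\im\,\mathrm{ev}$ --- the duality gap of Example \ref{eg:gap}.
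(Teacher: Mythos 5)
Your proposal follows the paper's own route: both arguments resolve by a c-sectionwise surjection from a direct sum of pushforwards of constant sheaves [Lemma \ref{lem:cogodement}], identify the feasible flow-values with $\dihomology{1}(X;\sheaf{1})\cong\dicohomology{0}(X;\orientation{S}\tensor{1}\sheaf{1})$ via Theorem \ref{thm:pd} and Proposition \ref{prop:flows}, obtain the homotopy limit as the image of $\bigcap_C\cutvalue{C}{-}$ of the resolution via Proposition \ref{prop:holim}, and derive equality under exactness from Lemma \ref{lem:cuts} together with the connecting maps. The only real divergence is one of explicitness rather than of method: the paper compresses the combinatorial core into the bare assertion of a ``dotted isomorphism'' $\bigcap_C\cutvalue{C}{\sheaf{1}_0}\cong\dihomology{1}(X;\sheaf{1})$, whereas you isolate that step as the flat-coefficient flow--cut identity and sketch its proof by induction on the height of the acyclic poset $X-e$ --- a reasonable expansion of the same step, not a different argument.
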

\begin{proof}
  Let $C$ denote an $e$-cut.
  Let $[C]$ denote $\cutvalue{C}{\orientation{S}\tensor{1}\sheaf{1}}$ for each $C$.

  There exists a natural transformation $\sheaf{1}_0\ra\sheaf{1}$ from a direct sum $\sheaf{1}_0$ in $\Sh_{X;\CONSTRAINTS}$ of pushforwards of constant sheaves and dotted isomorphism making the diagram
  \begin{equation*}
    \xymatrix{
         \bigcap_{C}\cutvalue{C}{\sheaf{1}_0}
          \ar@{.>}[r]^{\cong}
          \ar[d]
      &  \dihomology{1}(X;\sheaf{1})
          \ar[d]^-{\dihomology{0}(X-e\subset X)\circ\delta_-\circ\dicohomology{0}(e\subset X)}
        \\
         \cutvalue{e}{\orientation{S}\tensor{1}\sheaf{1}}
          \ar[r]
      &  \dicohomology{1}(X;\orientation{S}\tensor{1}\sheaf{1})=\dihomology{0}(X;\sheaf{1}),
    }
  \end{equation*}
  where the left vertical arrow is the composite of inclusion into $\cutvalue{e}{\sheaf{1}_0}$ followed by $\cutvalue{e}{\epsilon}$ and bottom horizontal arrow is inclusion, commute [Lemma \ref{lem:cogodement}].
  The left and right vertical arrows have respective images the left and middle sides of (\ref{eqn:mfmc}) [Proposition \ref{prop:flows}].
  The inclusion in (\ref{eqn:mfmc}) follows by terminality of the natural transformation $\holim_C\cutvalue{C}{-}\ra\bigcap_C\cutvalue{C}{-}$.

  Consider the case $\dihomology{1}(-;\sheaf{1})$ exact at $e$.  
  Take $\lambda\in[e]-\im\dicohomology{0}(e\subset X;\orientation{S}\tensor{1}\sheaf{1})$.
  Then $\delta_-\lambda\neq\delta_+\lambda$ by exactness.
  Then there exists an $e$-cut $C$ such that $\delta_-\lambda\notin[C]$ and $\delta_+\lambda\in[C]$ [Lemma \ref{lem:cuts}].
  Otherwise by naturality $\delta_-\lambda=\delta_+\lambda$.
\end{proof}

\begin{eg}[Duality Gap]
  \label{eg:gap}
  Consider the digraph $(X;\omega)$
  \begin{center}
  \begin{tikzpicture}[->,>=stealth',shorten >=1pt,auto,node distance=1.5cm,
                    thick,main node/.style={circle,fill=yellow!20,draw,font=\sffamily\small}]
 \coordinate (0);
  \node[main node,fill=blue] (1) [right of =0] {};
  \node[main node,fill=blue] (2) [above right of =1] {};
  \node[main node,fill=blue] (3) [right of =2] {};
  \node[main node,fill=blue] (4) [below right of =3] {};
  \node[main node,fill=blue] (5) [below right of =1] {};
  \node[main node,fill=blue] (6) [right of =5] {};
  \coordinate [right of =4] (7);
 \path[every node/.style={font=\small}]
    (0) edge node [sloped, pos=.5] {$e$} (1)
    (1) edge node [sloped, pos=.9] {$x=0$} (2)
    (2) edge node [sloped, pos=.5] {$\R_{\geqslant 0}^2$} (3)
    (5) edge node [sloped, pos=.5] {$xy=0$} (6)
    (6) edge node [sloped, pos=.9] {$xy=0$} (4)
    (4) edge node [sloped, pos=.5]{$e$} (7)
    (3) edge node [sloped, pos=.1] {$y=0$} (4)
    (1) edge node [sloped, pos=.1] {$xy=0$} (5);
  \end{tikzpicture} 
  \vspace{.1in}
  \end{center} 
  weighted in $\R_{\geqslant 0}^2$, where $\omega_e=\R_{\geqslant 0}^2$.
  The feasible $e$-values of (finite, decomposable) $\omega$-flows form the partial $\N$-subsemimodule of $\R^2_{\geqslant 0}$ defined by $xy=0$.  
  However, $\cutvalue{C}{\orientation{S}\tensor{1}\sheaf{1}}=\R^2_{\geqslant 0}$ for each $e$-cut $C$.
  For $(1,1)\in\omega(e)$, $\delta_-(1,1)=\delta_+(1,1)$ are classes in $\dicohomology{1}(X-e;\orientation{S}\tensor{1}\sheaf{1})$ both represented by elements in $\directsum_{c\in C}\omega(c)$ for each $e$-cut $C$.
\end{eg}

\begin{cor}[Algebraic MFMC]
  \label{cor:mfmc}
  The equality
  $$\bigvee_{\substack{\text{finite and}\\\text{$S$-decomposable}\\\text{flow $\phi$}}}\backspace\backspace\phi(e)=\backspace\;\;\bigwedge_{\substack{\text{$e$-cut $C$}}}\sum_{c\in C}\omega_c.$$
  holds for the following data.
  \begin{enumerate}
    \item naturally complete inf-semilattice ordered $S$-semimodule $M$
    \item digraph $(X;\edgeweights)$ with edges weighted by elements in $M$
    \item edge $e$ in $X$ with $X-e$ acyclic
  \end{enumerate}
\end{cor}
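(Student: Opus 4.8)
The plan is to specialize Theorem~\ref{thm:mfmc} to the partial $S$-sheaf $\omega$ of edge weights --- regarded as the partial $S$-subsheaf of $\constantsheaf{M}$ with $\omega(v)=M$ for $v\in V_X$ and $\omega(e')=(\omega_{e'})$ the principal down-set for $e'\in E_X$ --- and then to translate both sides of~(\ref{eqn:mfmc}) into the order theory of $M$. The first task is to arrange that the inclusion in~(\ref{eqn:mfmc}) is an equality, i.e.\ that $\dihomology{1}(-;\omega)$ is exact at $e$. For this I would invoke Lemma~\ref{lem:exactness} with $\sheaf{1}=\omega$ and $U=\{e\}$: $\omega$ is naturally inf-semilattice ordered because $M$ is and a principal down-set of $M$ is closed under the binary meet and inherits the absorption law $x\wedge(y+z)=(x\wedge y)+z$, while the restriction maps of $\omega$ are partial identities $M\rel(\omega_{e'})$, hence finite-meet preserving with image the down-set $(\omega_{e'})$; and $X-e$ is acyclic by hypothesis while $\{e\}$ is open and acyclic. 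Lemma~\ref{lem:exactness} then gives exactness at $e$, so Theorem~\ref{thm:mfmc} yields
$$\flowvalues{e}{X}{\omega}=\bigcap_{C}\cutvalue{C}{\orientation{S}\tensor{1}\omega},$$
the intersection taken over all minimal $e$-cuts $C$.

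Next I would evaluate both sides. Proposition~\ref{prop:cut-values} identifies $\cutvalue{C}{\orientation{S}\tensor{1}\omega}=\bigl(\sum_{c\in C}\omega_c\bigr)$, the principal down-set in $M$ on the value of the cut $C$. In a naturally complete $M$ one has $\bigcap_i(a_i)=\bigl(\bigwedge_i a_i\bigr)$ for any family $\{a_i\}$: the inclusion $\subseteq$ is the universal property of the infimum, and $\supseteq$ follows from $\bigwedge_i a_i\leqslant_M a_i$. Moreover $C\subseteq C'$ forces $\sum_{c\in C}\omega_c\leqslant_M\sum_{c\in C'}\omega_c$ and hence $\cutvalue{C}{\orientation{S}\tensor{1}\omega}\subseteq\cutvalue{C'}{\orientation{S}\tensor{1}\omega}$, so --- every $e$-cut containing a minimal one --- the intersection over minimal $e$-cuts coincides with the intersection over all $e$-cuts. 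Therefore $\flowvalues{e}{X}{\omega}=\bigl(\bigwedge_{e\text{-cut }C}\sum_{c\in C}\omega_c\bigr)$ is a principal down-set, so its maximum exists and equals $\bigwedge_{e\text{-cut }C}\sum_{c\in C}\omega_c$; since the maximum of a set of flow $e$-values is the join of those values, the join of the $e$-values of finite, locally $S$-decomposable $\omega$-flows equals $\bigwedge_{e\text{-cut }C}\sum_{c\in C}\omega_c$.

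Finally I would pass from ``locally $S$-decomposable'' to ``$S$-decomposable'' on the left of the claimed identity. One direction is free, since an $S$-decomposable $\omega$-flow is locally $S$-decomposable, giving $\bigvee_{\phi}\phi(e)\leqslant_M\bigwedge_{C}\sum_{c\in C}\omega_c$ over finite $S$-decomposable $\phi$. For the reverse inequality it suffices to realize the bound by an $S$-decomposable $\omega$-flow: by the previous paragraph the element $\bigwedge_C\sum_{c\in C}\omega_c$ lies in $\flowvalues{e}{X}{\omega}$, hence is the $e$-value of a finite $\omega$-flow $\phi$ lifting, along the c-sectionwise surjection $\sheaf{1}_0\ra\omega$ used in the proof of Theorem~\ref{thm:mfmc}, to a finite flow on a direct sum $\sheaf{1}_0$ of pushforwards of free (hence flat) constant sheaves; such a lift decomposes --- via the classical flow-decomposition theorem applied on its finite support --- as an $S$-linear combination of loop flows, and its image along $\sheaf{1}_0\ra\omega$ is then a visibly $S$-decomposable $\omega$-flow with $e$-value $\phi(e)$. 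The step I expect to cost the most care is exactly this reconciliation: verifying that for edge-weight sheaves the feasible $e$-values of decomposable and of locally decomposable flows coincide, and that the optimal value is attained rather than merely approached; by contrast, the passage through Theorem~\ref{thm:mfmc}, Proposition~\ref{prop:cut-values}, and the completeness of $M$ is routine, modulo the secondary check that a principal down-set of a naturally complete inf-semilattice ordered $S$-semimodule is again one, with meet-preserving inclusion.
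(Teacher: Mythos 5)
Your proposal is correct and follows essentially the same route as the paper's proof: specialize Theorem~\ref{thm:mfmc} to the edge-weight subsheaf $\omega$, use Lemma~\ref{lem:exactness} to upgrade the inclusion in~(\ref{eqn:mfmc}) to an equality, identify cut values via Proposition~\ref{prop:cut-values}, convert the intersection of principal down-sets to the down-set of the meet using the naturally complete inf-semilattice structure of $M$, and identify the flow side via Proposition~\ref{prop:flows} and antisymmetry. You supply two reconciliations the paper leaves implicit --- that the intersection over minimal $e$-cuts agrees with that over all $e$-cuts, and that attainment of the optimum by a locally $S$-decomposable flow yields an $S$-decomposable one --- which is added care rather than a divergence in method.
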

\begin{proof}
  Letting $C$ denote an $e$-cut and $\phi$ denote a flow on $(X;\omega)$,
  $$\left(\bigwedge_{C}\sum_{e\in C}\omega_e\right)=\bigcap_{C}\left(\sum_{e\in C}\omega_e\right)=\bigcap_{C}\cutvalue{C}{\orientation{S}\tensor{1}\omega}=\flowvalues{e}{X}{\omega}=\left(\bigvee_\phi\phi(e)\right),$$
  where $(b)$ denotes the partial $S$-subsemimodule $\{a\;|\;a\leqslant_Mb\}$ for $\leqslant_M$ the natural preorder on $M$, with the first equality following from $M$ naturally inf-semilattice ordered, the second equality following from Proposition \ref{prop:cut-values}, the third equality following from Theorem \ref{thm:mfmc} and Lemma \ref{lem:exactness}, and the last equality following from the naturality of the isomorphism in Proposition \ref{prop:flows} and $M$ naturally complete.
  The conclusion follows because the natural preorder on $M$ is the preorder of a semilattice and hence antisymmetric.
\end{proof}

\addtocontents{toc}{\protect\setcounter{tocdepth}{0}}
\section{Acknowledgements}
The author is indebted to Rob Ghrist both for initially sketching the connection between Poincar\'{e} Duality and optimization and for suggesting countless improvements in exposition and corrections in content, Justin Curry for helpful conversations on cellular sheaves, and both Greg Henselmen and Shiying Dong for helpful comments and corrections on previous drafts.
This work was supported in part by federal contracts FA9550-12-1-0416, FA9550-09-1-0643, and HQ0034-12-C-0027.

\bibliography{gv}{}
\bibliographystyle{plain}
\end{document}